\newcommand{\xym}{\xymatrix@1@=14pt@M=2pt}
\tikzset{
    vector/.style={decorate, decoration={snake}, draw},
	provector/.style={decorate, decoration={snake,amplitude=2.5pt}, draw},
	antivector/.style={decorate, decoration={snake,amplitude=-2.5pt}, draw},
    fermion/.style={draw=black, postaction={decorate},
        decoration={markings,mark=at position .55 with {\arrow[draw=black]{>}}}},
    fermionbar/.style={draw=black, postaction={decorate},
        decoration={markings,mark=at position .55 with {\arrow[draw=black]{<}}}},
    fermionnoarrow/.style={draw=black},
    gluon/.style={decorate, draw=black,
        decoration={coil,amplitude=4pt, segment length=5pt}},
    scalar/.style={dashed,draw=black, postaction={decorate},
        decoration={markings,mark=at position .55 with {\arrow[draw=black]{>}}}},
    scalarbar/.style={dashed,draw=black, postaction={decorate},
        dwecoration={markings,mark=at position .55 with {\arrow[draw=black]{<}}}},
    scalarnoarrow/.style={dashed,draw=black},
    electron/.style={draw=black, postaction={decorate},
        decoration={markings,mark=at position .55 with {\arrow[draw=black]{>}}}},
	bigvector/.style={decorate, decoration={snake,amplitude=4pt}, draw},
}
\newtheorem{theorem}{Theorem}[section]
\newtheorem{prop}[theorem]{Proposition}
\newtheorem{lemma}[theorem]{Lemma}
\newtheorem{cor}[theorem]{Corollary}
\theoremstyle{definition}
\newtheorem{dfn}[theorem]{Definition}
\newtheorem{dfn/lem}{Definition/Lemma}
\theoremstyle{remark}
\newtheorem{rmk}[theorem]{Remark}
\newtheorem{eg}[theorem]{Example}
\newcommand{\on}{\operatorname}
\newcommand{\ol}{\overline}
\newcommand{\nc}{\newcommand}
\nc{\wt}{\widetilde}
\nc{\UU}{\mathbf{U}}
\newcommand{\R}{R}
\nc{\cR}{\mathcal{R}}
\nc{\C}{\mathbb{C}}
\newcommand{\Z}{\mathbb{Z}}
\nc{\U}{\mathbf{U}}
\nc{\rr}{\mathbf{r}}
\nc{\zz}{\mathbf{z}}
\nc{\PD}{\on{PD}}
\nc{\D}{\on{Discs}}
\newcommand{\mc}{\mathcal}
\nc{\kk}{\mathbf{k}}
\nc{\CEcoh}{\mathcal{C}^{*}}
\nc{\fh}{\mathfrak{h}}
\nc{\fg}{\mathfrak{g}}
\nc{\fghat}{\widehat{\fg}}
\nc{\fn}{\mathfrak{n}}
\nc{\cF}{\mc{F}}
\nc{\cG}{\mc{G}}
\nc{\CC}{\mathbb{C}}
\nc{\RR}{\mathbb{R}}
\nc{\Linf}{L_{\infty}}
\nc{\cL}{\mc{L}}
\nc{\ffg}{\fghat^{\#c}_{\pi}}
\nc{\ffga}{\fghat^{\#c, alg}_{\pi}}
\nc{\Ch}{\mc{C}^{Lie}_{*}}
\nc{\delbar}{\overline{\partial}}
\nc{\del}{\partial}
\nc{\dd}{d}
\nc{\vac}{|0\rangle}
\nc{\cK}{\mc{K}}
\nc{\opqm}[2]{\Omega^{#1,#2}_{m}}
\nc{\fgtil}{\wt{\fg}}
\nc{\Sym}{\on{Sym}}
\nc{\dzbar}{d \overline{z}}
\nc{\zbar}{\overline{z}}
\nc{\symcat}{\on{\bf C}^{\otimes}}
\nc{\Fcal}{\mathcal{F}}
\nc{\sF}{\mc{F}}
\nc{\sG}{\mc{G}}
\nc{\DVS}{\on{DVS}}
\nc{\Vect}{\on{Vect}}
\nc{\dgVect}{\on{dg-Vect}}
\nc{\ip}{\langle \bullet , \bullet \rangle}
\nc{\ses}[3]{0 \rightarrow #1 \rightarrow #2 \rightarrow #3 \rightarrow 0}
\DeclareMathOperator{\Ker}{Ker}
\def\d{{\rm d}}
\def\tensor{\otimes}
\def\Hat{\widehat}
\def\xto{\xrightarrow}
\begin{document}

 
 \title{Toroidal prefactorization algebras associated to holomorphic fibrations and a relationship to vertex algebras}
 
  \author{Matt Szczesny, Jackson Walters, and Brian Williams}
  \date{}

\begin{abstract}
Let $X$ be a complex manifold, $\pi: E \rightarrow X$ a locally trivial holomorphic fibration with fiber $F$, and $(\fg, \ip )$ a Lie algebra with an invariant symmetric form. We associate to this data a holomorphic prefactorization algebra  $\sF_{\fg, \pi}$ on $X$ in the formalism of Costello-Gwilliam. When $X=\mathbb{C}$, $\fg$ is simple, and $F$ is a smooth affine variety, we extract from $\sF_{\fg, \pi}$ a vertex algebra which is a vacuum module for the universal central extension of the Lie algebra $\fg \otimes H^{0}(F, \mc{O})[z,z^{-1}]$. As a special case, when $F$ is an algebraic torus $(\CC^{*})^n$, we obtain a vertex algebra naturally associated to an $(n+1)$--toroidal algebra, generalizing the affine vacuum module. 
\end{abstract}

\maketitle
\thispagestyle{empty}

\tableofcontents

\newpage

\section{Introduction}

Affine Kac-Moody algebras play an important role in many areas of mathematics and physics, specifically representation theory and conformal field theory. 
The affine Kac-Moody algebra $\Hat{\fg}$ associated to a complex Lie algebra $\fg$ is a central extension of the loop algebra $L \fg = \fg[z,z^{-1}]$. 
Toroidal algebras provide a generalization of affine Kac-Moody algebras to include ``multi-dimensional" loops.
The $(n+1)$-toroidal algebra is defined as a central extension of the Lie algebra
\[
L^{n+1} \fg = \fg[z, z_1^\pm \ldots, z_n^{\pm}] .
\] 
This is precisely the $n$th iterated algebraic loop space of the Lie algebra $\fg$. 
More generally, one can speak of a ``toroidal algebra" associated to any commutative $\CC$-algebra $A$ and Lie algebra $\fg$. 
This more general class of Lie algebras are defined as central extensions of Lie algebras of the form $A \tensor L \fg$, for which $A = \CC[z_1^{\pm}, \ldots, z_n^{\pm}]$ is the special case above. 

The vacuum module of the affine Kac-Moody Lie algebra has the structure of a {\em vertex algebra}, which turns out to be an extremely valuable tool when studying the representation theory of $\Hat{\fg}$. 
Connections between toroidal algebras and vertex algebras have been explored in the works \cite{MRY, MR, BBS, LTW}. 

Our goal in this paper is to use the language factorizations to provide an intrinsic geometric construction of vertex algebras associated to toroidal algebras. 
Being geometric, our construction exists in more generality than in the ordinary construction of toroidal algebras and their vertex algebra enhancements. 
Indeed, in addition to a Lie algebra $\fg$ with an invariant pairing, we start with any locally trivial holomorphic fibration $\pi: E \rightarrow X$ with fiber $F$.
Associated to this data, we produce a (pre)factorization algebra $\sF_{\fg, \pi}$ on $X$. 
Our model for factorization algebras follows the work of Costello-Gwilliam, which has been extensively developed in the work \cite{CG}. 

As will be explained below, $\sF_{\fg, \pi}$ can be viewed as a factorization envelope (see Section \ref{sec: prefact1}) with coefficients in the algebra $H^*(F, \mc{O}_F)$.  
When $X = \C$, and $F$ is a smooth complex affine variety, a result of Costello-Gwilliam extracts from $\sF_{\fg, \pi}$ (or more precisely, a certain dense subalgebra) a vertex algebra.
This vertex algebra can be described as an induced module for a central extension $\fghat_F$ of the Lie algebra
\begin{equation} \label{gf}
\fg \otimes H^{0}(F, \mc{O}_F)[z,z^{-1}]
\end{equation}
(here $H^{0}(F, \mc{O})$ denotes the $\C$-algebra of regular functions on $F$), that is universal when $\fg$ is simple.   When $F$ is a point and $\pi: E \mapsto X$ the identity map, $\fghat_F$ is isomorphic to the affine Kac-Moody algebra $\fghat$ associated to $(\fg, \langle, \rangle)$, and the corresponding vertex algebra is the affine vacuum module. When $F$ is a torus $(\CC^{*})^n$, $\fghat_F$ is the $(n+1)$--toroidal algebra, and we thus obtain a natural vertex realization of toroidal algebras. 

In the remainder of this introduction, we highlight the main ingredients in this construction with more detail. 

\subsection{(Pre)factorization algebras}\label{sec: prefact1}

The formalism of (pre)factorization algebras was developed by Kevin Costello and Owen Gwilliam in \cite{CG} to describe the algebraic structure of observables in quantum field theory as well as their symmetries. Roughly speaking, a prefactorization algebra $\cF$ on a manifold $X$ assigns to each open subset $U \subset X$ a cochain complex $\cF(U)$, and to each inclusion 
\[
U_1 \sqcup U_2 \sqcup \cdots \sqcup U_n \subset V
\]
of disjoint open subsets $U_i$ of $V$, a map 
\begin{equation} \label{str_map}
m^{U_1, \cdots, U_n}_V: \cF(U_1) \otimes \cdots \cdots \cF(U_n) \mapsto \cF(V)
\end{equation}
subject to some natural compatibility conditions. If $\cF$ is the prefactorization algebra of observables in a quantum field theory, the cohomology groups $H^i (\cF(U))$ can be interpreted as the observables of the theory on $U$ as well as their (higher) symmetries. This structure is reminiscent of a multiplicative cosheaf, and just as in the theory of sheaves/cosheaves a gluing axiom distinguishes factorization algebras from mere prefactorization algebras. 

An important source of prefactorization algebras is the \emph{factorization envelope} construction, which proceeds starting with a fine sheaf $\mc{L}$ of dg (or $L_{\infty}$ algebras) on $X$.  Denoting by $\mc{L}_c$ the cosheaf of compactly supported sections of $\mc{L}$, we have maps
\begin{equation} \label{ext1}
 \oplus^n_{i=1} \mc{L}_c (U_i) \cong \mc{L}_c(U_1 \cup \cdots \cup U_n) \mapsto \mc{L}_c (V)
\end{equation}
for disjoint opens $U_i \subset V$, where the map on the right is extension by $0$. Applying the functor $\Ch$ of Chevalley chains to (\ref{ext1}) yields maps
\[
\otimes^n_{i=1} \Ch(\mc{L}_c (U_i)) \mapsto \Ch(\mc{L}_c (V))
\]
The argument just sketched shows that the assignment 
\begin{equation} \label{envelope}
U \mapsto \Ch(\mc{L}_c (U))
\end{equation}
defines a prefactorization algebra. It is called the \emph{factorization envelope of $\mc{L}$} and denoted $\U \mc{L}$. 

Costello-Gwilliam showed that there is a close relationship between a certain class of prefactorization algebras on $X=\CC$ and vertex algebras. More precisely, if $\cF$ satisfies a type of holomorphic translation-invariance and is equivariant with respect to the natural $S^1$ action on $\CC$ by rotations, then the vector space
\begin{equation} \label{vertex_from_fact}
V({\cF}) := \bigoplus_{l \in \mathbb{Z}} H^{*}(\cF^{(l)}(\CC))
\end{equation}
has the structure of a vertex algebra, where $\cF^{(l)}(\CC)$ denotes the $l$-th eigenspace of $S^1$ in $\cF(\CC)$. 

\subsection{Affine Kac-Moody algebras, toroidal algebras, and associated vertex algebras}

Affine Kac-Moody Lie algebras \cite{KacInf} are a class of infinite-dimensional Lie algebras which play a central role in representation theory and conformal field theory. Given a finite-dimensional complex simple Lie algebra $\fg$, the corresponding affine Lie algebra $\fghat$ is the universal central extension of the loop algebra $\fg[z,z^{-1}] = \fg \tensor \mathbb{C}[z,z^{-1}]$ by a one-dimensional center $\mathbb{C}\kk$. I.e., there is a short exact sequence
\begin{equation} \label{KM_ext}
0 \to \mathbb{C}\kk \to \fghat \to \fg[z,z^{-1}] \to 0
\end{equation}
As a complex vector space $\fghat = \fg[z,z^{-1}] \oplus \mathbb{C}\kk$, with bracket
\[
[J \otimes f(z), J' \otimes g(z)] = [J,J']\otimes f(z) g(z) + \kk \langle J, J' \rangle Res_{z=0} f dg, 
\]
where $J, J' \in \fg$, and $\langle, \rangle$ denotes the Killing form. In physics, affine algebras appear as symmetries of conformal field theories (CFT's) such as the WZW model, and tools adapted from CFT, notably vertex algebras, play a crucial role in studying their representation theory. In particular, for each $K \in \mathbb{C}$, the \emph{vacuum module}
\[
V_{K}(\fghat) = \on{Ind}^{\fghat}_{\fghat_+} \mathbb{C}_K
\]
(where $\fghat_+ = \fg[z] \oplus \mathbb{C} \kk \subset \fghat$, and $\mathbb{C}_K$ denotes its one-dimensional representation on which the first summand acts trivially and $\kk$ acts by $K$ ) has the structure of a vertex algebra. The representation theory of $\fghat$ and $V_K (\fghat)$ are inextricably linked - $V_{K}(\fghat)$ picks out  interesting categories of representations of $\fghat$, and provides computational tools for studying these. 


One may consider a generalization of the universal central extension (\ref{KM_ext}) where $\mathbb{C}[z,z^{-1}]$ is replaced by an arbitrary commutative $\C$--algebra $R$. 
That is, one begins instead with the Lie algebra $\fg_R = \fg \tensor R$ with Lie bracket defined by 
\[
[J \tensor r, J' \tensor s] = [J,J'] \tensor rs .
\]
It is shown in \cite{Kassel} that $\fg_R$ has a central extension of the form 
\[
0 \to \Omega^1_R / \d R  \to \Hat{\fg}_R \to \fg_R \to 0.
\]
where $\Omega^1_R$ denotes the module of Kahler differentials, and $d: R \to \Omega^1_R $ is the universal derivation. Thus
$$\fghat_R \cong \fg \otimes R \oplus  \Omega^1_R / \d R,  $$
as a vector space, with bracket
$$ [J \tensor r, J' \tensor s] = [X,Y] \tensor rs + \overline{\langle X, Y \rangle r ds}, $$ where $\overline{\omega}$ denotes the class of $\omega \in \Omega^1_R$ in $ \Omega^1_R / \d R$. 
When $\fg$ is simple, the central extension $\fghat_R$ is universal. 

An important example occurs when $R = \C[t_1^{\pm 1}, \cdots, t_n^{\pm 1}]$ - the ring of algebraic functions on the $n$--torus. In this case $\fghat_R$ is a higher loop generalization of $\fghat$ called the  $n$--toroidal algebra. We note that for $n > 1$, the central term $ \Omega^1_R / \d R$ is infinite-dimensional over $\CC$.

It is a natural question whether one can associate to $\fghat_R$, and in particular to toroidal algebras, a natural vertex algebra $V(\fghat_R)$ along the same lines that $V_k (\fg)$ is associated to $\fghat$. In this paper, we provide an affirmative answer in the case when $R=A[z,z^{-1}]$ for some $\CC$--algebra $A$, and  give a geometric construction of $V(\fghat_R)$ in terms of prefactorization algebras.

\subsection{Prefactorization algebras from holomorphic fibrations}


In this paper we construct prefactorization algebras starting with two pieces of data: 

\begin{itemize}
\item A locally trivial holomorphic fibration $\pi: E \rightarrow X$ of complex manifolds with fiber $F$.
\item A Lie algebra $(\fg, \langle, \rangle)$ with invariant bilinear form.
\end{itemize}

We begin with a sheaf of dg Lie algebras (DGLA's) on $X$
\[
\fg_{\pi} = (\fg \otimes \pi_* \Omega^{0,*}_E, \delbar)
\]
with bracket $$ [J \otimes \alpha, J' \otimes \beta] = [J,J'] \otimes \alpha \wedge \beta, \; \;  \; J, J' \in \fg, \alpha, \beta \in \pi_* \Omega^{0,*}_E.$$ 
$\fg_{\pi}$ has an $L_{\infty}$ central extension $\fghat_{\pi}$ whose underlying complex of sheaves is of the form $$ \fghat_{\pi} = \fg_{\pi} \oplus \mc{K}_{\pi},$$ with $\mc{K}_{\pi}$ a certain three-term complex.  Our prefactorization algebra is $$\cF_{\pi, \fg} := \Ch(\fghat_{\pi, c}),$$ where $\fghat_{\pi, c}$ denotes the cosheaf of sections with compact support. This is   an instance of the factorization envelope construction (\ref{envelope}) described above. 

When $F$ is a smooth affine complex variety, and $E = X \times F$ is a trivial fibration, we may pass to a somewhat "smaller" prefactorization envelope $\sG^{alg}_{\pi, \fg}$
built starting with the sheaf of DGLA's
\[
(\fg \otimes H^0(F, \mc{O}) \otimes \Omega^{0,*}_X, \delbar)
\]
There is a map 
\begin{equation} \label{GtoF}
\sG^{alg}_{\pi, \fg} \to   \cF_{\pi, \fg}
\end{equation} induced by the map of DGLA's
\[
(\fg \otimes H^0(F, \mc{O}) \otimes \Omega^{0,*}_X, \delbar) \to (\fg \otimes \pi_* \Omega^{0,*}_E, \delbar)
\]
$\sG^{alg}_{\pi, \fg}$ is more manageable from technical standpoint, as it avoids certain analytic complications involving completions of Dolbeault cohomology of products. We note that the map (\ref{GtoF}) depends on a choice of trivialization of $E$. 

When $X=\CC$ (and $E$ is necessarily trivial), we may apply Costello-Gwilliam's result above to the prefactorization algebra $\sG^{alg}_{\pi, \fg}$,  recovering a vertex algebra $V({\sG^{alg}_{\fg, \pi}})$ as in \ref{vertex_from_fact}. It has the following simple description. Let $\fghat_F$ denote universal central extension of the Lie algebra $\fg \otimes H^0(F, \mc{O}_F)[z,z^{-1}]$ (i.e. this is $\fghat_R$ above, where $R=H^0(F, \mc{O}_F)[z,z^{-1}]$), and let $\fghat^+_{F}$ be the sub-algebra corresponding to the non-negative powers of $z$. Then as a representation of $\fghat_F$
\begin{equation} \label{VF}
V({\sG^{alg}_{\fg, \pi}}) \simeq \on{Ind}^{\fghat}_{\fghat^+_F} \CC,
\end{equation}
where $\CC$ is the trivial representation of $\fghat^+_F$. 

We may view these results as follows. When $X$ is a (arbitrary) Riemann surface, and $p \in X$ a point, we may choose a coordinate $z$ centered at p, and a local trivialization of $E$ near p. The cohomology prefactorization algebra $H^*(\cF_{\pi, \fg})$ is then locally modeled by the vertex algebra $V(\fghat_F)$ via the dense inclusion (\ref{GtoF}).  

\subsection{Outline of paper}

In section (\ref{Lie_section}) we recall universal central extensions, the construction of $\fghat_R$, and vertex algebras. We also show how to associate to the algebra $R=A[z,z^{-1}]$, where $A$ is a commutative $\CC$--algebra a vertex algebra generalizing the affine vacuum module. Our later geometric construction will be a special case of this. In section (\ref{fact_alg_section}) we recall some basic facts about prefactorization algebras. The construction of $\cF_{\fg,\pi}$  and the related prefactorization algebra $\sG^{alg}_{\fg, \pi}$ happens in section (\ref{main_construction_section}). Finally, in section (\ref{one_dim_section}) we consider the special case when $X=\CC$, and relate $\sG^{alg}_{\fg, \pi}$ to the vertex algebra $V(\fghat_F)$.

\medskip

\noindent {\bf Acknowledgements:} M.S. would like to thank Kevin Costello and Owen Gwilliam for patiently answering a number of questions and making several valuable suggestions. He also gratefully acknowledges the support of a Simons Collaboration Grant during the course of this project.

\section{Lie algebras and vertex algebras} \label{Lie_section}

\subsection{Conventions}

An $L_{\infty}$ structure on a graded vector space $\fh$ is the data of cohomological degree $1$ coderivation $\wt{d}$ of cofree cocommutative coalgebra
\[
\on{Sym}(\fh[1])
\]
satisfying $\wt{d}^2 = 0$. 
We may write $\wt{d} = \sum^{\infty}_{m=1} l_m $, where 
\[
l_m : \fh^{\otimes m} \to \fh[2-m]
\]
and the $l_m's$ are extended to the symmetric coalgebra as coderivations.  

Given such a square zero coderivation $\wt{d}$ the cochain complex
\[
\Ch (\fg) = \left(\on{Sym}(\fh[1]), \wt{d}\right)
\]
is called the {\em Chevalley-Eilenberg chain complex}. 
In the case of an ordinary Lie algebra, this complex computes Lie algebra homology. 

An ordinary Lie algebra corresponds to the case where $l_m=0$ for $m \neq 2$, and $l_2$ is the Lie bracket.
Differential graded Lie algebras correspond to the case where $l_m = 0$ for $m > 2$. 
We note that $(\fh, l_1)$ has the structure of a cochain complex, and $H^*(\fh, l_1)$ the structure of a graded Lie algebra. 
We refer to $\Ch(\fh, \wt{d})$ as the \emph{complex of Chevalley-Eilenberg chains} (even though it's cohomologically graded) since it computes Lie algebra homology. 
Its dual 
\[
\mc{C}^{*, Lie}(\fh)=\on{Sym}(\fh^*[-1], \wt{d^*})
\]
is the \emph{complex of Chevalley-Eilenberg cochains}. 
It computes Lie algebra cohomology.
For more on $L_{\infty}$ algebras, we refer the reader to \cite{KonSoi, LodVal}.

\subsection{Central extensions}

Let $\fg$ be a complex Lie algebra equipped with an invariant bilinear form $\ip$.
Also, fix a commutative $\CC$-algebra $R$.
Then, $ \fg_{R} := \fg \otimes_{\CC} R$ carries a natural complex Lie algebra structure with bracket
\[
[J \otimes r, J' \otimes s] = [J,J'] \otimes rs
\]
where $J,J' \in \fg$ and $r,s \in R$.
It is shown by Kassel ~\cite{Kassel} that there exists a universal central extension of the form
\[
0 \to H_2^{\rm Lie}(\fg_R) \to \Hat{\fg}_R \to \fg_R \to 0.
\]
Furthermore, when $\fg$ is simple and $\ip$ the Killing form, there is an isomorphism of the Lie algebra homology $H_2(\fg_R) \cong \Omega^1_{R/\CC} / \d R$ where $\Omega^1_{R/\CC}$ is the $R$-module of K\"{a}hler differentials of $R/\CC$ and $\d: R \to  \Omega^1_R$ is the universal derivation.
The bracket on 
\begin{equation} \label{uce}
\fghat_R \cong \fg \otimes R \oplus  \Omega^1_R / \d R, 
\end{equation}
 is given by 
\begin{align*} 
 [J \tensor r, J' \tensor s] &= [J,J'] \tensor rs + \overline{\langle J, J' \rangle r ds} \\
 				&= [J,J'] \tensor rs + \frac{1}{2} \overline{\langle J, J' \rangle (rds - s dr) }
\end{align*}				
where $\overline{\omega}$ denotes the class of $\omega \in \Omega^1_R$ in $ \Omega^1_R / \d R$. We will find the second form of the central cocycle more convenient to use. 

\begin{eg}

Let $n \geq 0$ be an integer.
An important class of examples is obtained by taking
\[
R := \CC[t^{\pm 1}_0, \cdots, t^{\pm -1}_n] .
\]
This is the algebra of functions on the $(n+1)$-dimensional algebraic torus. 

When $n=0$, the vector space $\Omega^1_R / \d R$ is one-dimensional with an explicit isomorphism given by the residue
\[
{\rm Res} : \Omega^1_R / \d R \xto{\cong} \CC . 
\]
The resulting Lie algebra $\fghat_R$ is the ordinary affine Kac-Moody algebra usually denoted by $\Hat{\fg}$. 
For $n \geq 1$, the vector space $\Omega^{1}_R / \d R$ is infinite dimensional. 
Indeed, let us denote $k_i = t_i^{-1} \d t_i$. 
The space $\Omega^1_{R} / \d R$ is generated over the ring $\CC[t_0^{\pm 1}, \ldots, t_n^{\pm 1}]$ by the symbols $k_0,\ldots, k_n$ subject to the relation
\[
\sum_{i = 0}^n m_i t_0^{m_0} \cdots t_n^{m_n} k_i = 0
\]
where $(m_0,\ldots, m_n)$ is any $n$-tuple of integers.
The Lie algebra $\Hat{\fg}_R$ is called the $(n+1)$-{\em toroidal} Lie algebra associated to $\fg$.

\end{eg}


It will be useful for us to have an $L_\infty$-model for the Lie algebra $\fghat_{R}$. 
This model amounts to replacing the vector space $\Omega^1_R / \d R$ appearing as the central term by the cochain complex
\[
\cK_{R} = \Ker (\d) [2] \to R[1] \xto{\d} \Omega^{1}_{R} .
\]
Just as the Lie algebra $\fghat_R$ is a central extension of $\fg_R = \fg \tensor R$, the $L_\infty$ model we wish to construct is a central extension of $\fg_R = \fg \tensor R$ by the cochain complex $\cK_R$. 
 
The central extension is determined by a cocycle $\phi \in C^*(\fg_R, \cK_R)$ of total degree two.
The cocycle is of the form $\phi = \phi^{(0)} + \phi^{(1)}$ where
\[
\begin{array}{ccccc}
\phi^{(1)} & : & (\fg_{R})^{\otimes 2} & \rightarrow & \Omega^1_{R} \\
& & (J \otimes r) \otimes (J' \otimes s) & \mapsto & \frac{1}{2} \langle J, J' \rangle (r \d s - s \d r)
\end{array}
\]
and 
\[
\begin{array}{ccccc}
\phi^{(0)} & : & (\fg_{R})^{\otimes 3} & \rightarrow & R \\
& & (J \otimes r)\otimes(J' \otimes s) \otimes (J'' \otimes t) & \mapsto & \frac{1}{2}  \langle [J,J'], J'' \rangle rst
\end{array}
\]

\begin{lemma}
The functional $\phi$ defines a cocycle in $C^*(\fg_{R}, \cK_{R}) $ of total degree two.
\end{lemma}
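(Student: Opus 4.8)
The plan is to regard $C^*(\fg_R, \cK_R)$ as the total complex of a bicomplex and to check that the total differential annihilates $\phi$ component by component. Since the central term $\cK_R$ carries the trivial $\fg_R$-action, the horizontal differential is the Chevalley--Eilenberg differential $\delta$ for Lie algebra cohomology with trivial coefficients, $(\delta\psi)(\xi_0,\dots,\xi_p) = \sum_{i<j}(-1)^{i+j}\psi([\xi_i,\xi_j],\xi_0,\dots,\widehat{\xi_i},\dots,\widehat{\xi_j},\dots,\xi_p)$, while the vertical differential is postcomposition with the internal differential of $\cK_R$. First I would record the bidegrees: viewing $\fg_R$ in degree $0$ and $\cK_R$ in degrees $-2,-1,0$ (namely $\Ker(\d)$, $R$, $\Omega^1_R$), the piece $\phi^{(1)}\colon \wedge^2\fg_R\to\Omega^1_R$ sits in bidegree $(2,0)$ and $\phi^{(0)}\colon\wedge^3\fg_R\to R$ in bidegree $(3,-1)$, so both have total degree $2$, confirming that $\phi$ is a well-defined element of total degree two.

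Next I would expand $D\phi$, whose total degree is $3$, into its possible components. The component valued in $\cK_R^{1}=0$ vanishes trivially, so the content of the lemma is exactly two identities: (A) $\delta\phi^{(0)}=0$ in $\Hom(\wedge^4\fg_R, R)$, and (B) $\delta\phi^{(1)} = \d\circ\phi^{(0)}$ in $\Hom(\wedge^3\fg_R,\Omega^1_R)$ (the latter up to the sign dictated by the bicomplex convention). For (A) I would use that invariance of $\ip$ makes the trilinear form $c(J,J',J'')=\langle[J,J'],J''\rangle$ totally antisymmetric and, by the Jacobi identity, a scalar Lie algebra $3$-cocycle (the Cartan $3$-cocycle). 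Since every term of $\delta\phi^{(0)}(\xi_0,\dots,\xi_3)$ collects the \emph{same} product $r_0r_1r_2r_3$ of the $R$-labels (the bracket merges two labels into a product and commutativity of $R$ reorders the rest), this scalar factors out and $\delta\phi^{(0)}$ reduces to $\tfrac12\,(r_0r_1r_2r_3)\,(\delta c) = 0$.

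For (B), the genuinely computational step, I would expand $\delta\phi^{(1)}(\xi_0,\xi_1,\xi_2)$ over the three pairs, use total antisymmetry of $c$ to pull out the common scalar $\tfrac12\langle[J_0,J_1],J_2\rangle$, and then apply the Leibniz rule $\d(r_ir_j)=r_i\,\d r_j+r_j\,\d r_i$ together with commutativity of $R$. After cancellation the surviving terms reassemble into $r_1r_2\,\d r_0+r_0r_2\,\d r_1+r_0r_1\,\d r_2=\d(r_0r_1r_2)$, giving $\delta\phi^{(1)}(\xi_0,\xi_1,\xi_2)=\tfrac12\langle[J_0,J_1],J_2\rangle\,\d(r_0r_1r_2)$, which is exactly $\d\bigl(\phi^{(0)}(\xi_0,\xi_1,\xi_2)\bigr)$ since the scalar is constant.

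I anticipate the main obstacle to be not the algebra itself but the sign bookkeeping: one must fix the sign convention for the total differential of the bicomplex so that the two contributions to the $\Hom(\wedge^3\fg_R,\Omega^1_R)$ component, namely $\delta\phi^{(1)}$ and the vertical image $\d\circ\phi^{(0)}$, appear with opposite signs and hence cancel. With the standard rule $D=\delta+(-1)^p d_{\mathrm{int}}$ on a $(p,q)$-bicomplex the vertical part of $\phi^{(0)}$ (horizontal degree $p=3$) enters with $(-1)^3=-1$, so $D\phi\big|_{(3,0)}=\delta\phi^{(1)}-\d\circ\phi^{(0)}=0$ by the computation above. A secondary point to keep straight is the antisymmetrization: one should verify at the outset that $\phi^{(1)}$ and $\phi^{(0)}$ descend to $\wedge^\bullet\fg_R$ --- for $\phi^{(1)}$ this is the antisymmetry of $\langle J,J'\rangle(r\,\d s-s\,\d r)$, and for $\phi^{(0)}$ it is the total antisymmetry of the Cartan form noted above.
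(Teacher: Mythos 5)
Your proposal is correct and follows essentially the same route as the paper: decompose the total differential into the Chevalley--Eilenberg part and the internal differential of $\cK_R$, note that the component $\d\phi^{(1)}$ lands in the zero term of the complex, verify $\d_{CE}\phi^{(0)}=0$ from the Jacobi identity and invariance of the pairing, and check the cross-term cancellation $\d_{CE}\phi^{(1)} \pm \d\circ\phi^{(0)}=0$ by direct computation. You have simply written out in full the Leibniz-rule calculation that the paper compresses into the phrase ``by direct calculation,'' and your degree and sign bookkeeping is consistent with the conventions $\cK_R = \Ker(\d)[2]\to R[1]\to \Omega^1_R$.
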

\begin{proof}
The differential in the cochain complex $C^*(\fg_R , \cK_R)$ is of the form $\d + \d_{CE}$ where $\d$ is the de Rham differential defining the complex $\cK_R$, and $\d_{CE}$ is the Chevalley-Eilenberg differential encoding the Lie bracket of $\fg_R$.
It is immediate that $\d \phi^{(1)} = 0$, $\d_{CE} \phi^{(0)}=0$ by the Jacobi identity for $\fg$ and invariance of $\ip$, and $\d \phi^{(0)} + \d_{CE} \phi^{(1)} = 0$ by direct calculation.
Thus $(\d_{CE} + \d) \phi = 0$ as desired.
\end{proof}

The cocycle $\phi$ defines an $L_{\infty}$ central extension
\[
\cK_R \to \fgtil_R \to \fg_R .
\]
As a vector space, $\fgtil_{R} = \fg_R \oplus \cK_R$, and the $L_\infty$ operations are defined by $\ell_1 = \d, \ell_2 = [\cdot,\cdot]_{\fg_R} + \phi^{(1)}$, and $\ell_3 = \phi^{(0)}$.
The following is immediate from our definitions:

\begin{lemma}
There is an isomorphism of Lie algebras
$H^{*}(\fgtil_{\R}, \ell_1) = \fghat_{\R}.$
\end{lemma}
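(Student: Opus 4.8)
The plan is to prove the isomorphism in two stages: first identify $H^*(\fgtil_R, \ell_1)$ as a graded vector space, then pin down the induced bracket. Since the $L_\infty$ differential $\ell_1 = \d$ acts only on the central complex $\cK_R$ and annihilates $\fg_R$ (which sits in cohomological degree zero, receiving and emitting no $\ell_1$-differential), the cohomology splits as $H^*(\fgtil_R, \ell_1) = \fg_R \oplus H^*(\cK_R, \d)$. The first task is therefore to show that $\cK_R$ is a resolution of $\Omega^1_R/\d R$ placed in degree zero. Unwinding $\Ker(\d)[2] \to R[1] \xto{\d} \Omega^1_R$, the leftmost map is the inclusion $\Ker(\d) \hookrightarrow R$, which is injective, so $H^{-2}=0$; at the middle term the cocycles are exactly $\Ker(\d)$ while the coboundaries are the image of that inclusion, so $H^{-1}=0$; and at the rightmost term $H^0 = \Omega^1_R/\d R$. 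Consequently $H^*(\fgtil_R, \ell_1)$ is concentrated in degree zero and equals $\fg_R \oplus \Omega^1_R/\d R$, matching $\fghat_R$ as a vector space by (\ref{uce}).

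Next I would determine the Lie bracket. By the general principle recalled in the conventions, the cohomology of an $L_\infty$ algebra is a graded Lie algebra whose bracket is induced by $\ell_2$ on cocycle representatives. The key simplification is that the cohomology is concentrated in a single degree: the induced operation coming from $\ell_m$ has cohomological degree $2-m$, so for $m \geq 3$ it would carry degree-zero classes into degree $2-m < 0$, where the cohomology vanishes. In particular the ternary bracket $\ell_3 = \phi^{(0)}$, which lands in the degree $-1$ piece $R \subset \cK_R$, contributes nothing after passing to cohomology. Hence $H^*(\fgtil_R, \ell_1)$ is an honest Lie algebra whose bracket is induced purely by $\ell_2 = [\cdot,\cdot]_{\fg_R} + \phi^{(1)}$.

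It then remains to compute this induced bracket and match it term by term with the definition of $\fghat_R$. On two classes represented by $J \otimes r,\, J' \otimes s \in \fg_R$, applying $\ell_2$ gives $[J,J'] \otimes rs + \frac{1}{2}\langle J, J'\rangle(r\,\d s - s\,\d r)$; passing to $H^0$ replaces the $\Omega^1_R$-component by its class modulo $\d R$, recovering exactly the cocycle $\frac{1}{2}\overline{\langle J, J'\rangle(r\,\d s - s\,\d r)}$ of (\ref{uce}). Since neither $[\cdot,\cdot]_{\fg_R}$ nor $\phi^{(1)}$ accepts inputs from $\cK_R$, any representative $\omega \in \Omega^1_R$ brackets trivially against everything, so the image $\Omega^1_R/\d R$ of $\cK_R$ in cohomology is central, precisely as required.

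I expect no serious obstacle, in line with the text's remark that the statement is immediate from the definitions; the only point demanding genuine care is the vanishing of the higher bracket $\ell_3$ on cohomology, which is the main conceptual step and is settled entirely by the observation that the cohomology is concentrated in degree zero.
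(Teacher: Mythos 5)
Your proof is correct and follows essentially the same route as the paper: compute that $\cK_R$ resolves $\Omega^1_R/\d R$ so the cohomology is concentrated in degree zero, then read off the bracket from $\ell_2 = [\cdot,\cdot]_{\fg_R} + \phi^{(1)}$. You are in fact more careful than the paper on two points worth keeping: the explicit degree argument for why $\ell_3 = \phi^{(0)}$ induces nothing on cohomology (the paper passes over this silently), and the correct identification of the binary cocycle as $\phi^{(1)}$ (the paper's displayed formula mislabels it as $\phi^{(0)}$).
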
 
\begin{proof}
The cohomology of $\Tilde{\fg}_R$ is concentrated in degree zero, and isomorphic to $$\fg_R \oplus H^0(\cK_R) = \fg_R \oplus \Omega^1_R / \d R $$ as a vector space. 
Our definition of $\phi$ implies that 
\[
\phi^{(0)} ((J \otimes r) \otimes (J' \otimes s)) = \frac{1}{2} \langle J, J' \rangle (r ds - s dr) = rds \; mod \; dR  
\]
so that the resulting Lie bracket is the same as that of $\Hat{\fg}_R$. 
\end{proof}

\subsection{Vertex algebras}

We proceed to briefly recall the basics of vertex algebras and discuss an important class of examples, which will later be constructed geometrically via factorization algebras. We refer the reader to ~\cite{FBZ, Kac} for details. 

\begin{dfn}
A vertex algebra $(V,\vac, T, Y)$ is a complex vector
space $V$ along with the following data:
\begin{itemize}
\item A vacuum vector $\vac \in V$.
\item A linear map $T : V \to V$ (the translation operator).
\item A linear map $Y(-,z) : V \to {\rm End}(V)\llbracket z^{\pm 1}
  \rrbracket$ (the vertex operator). We write $Y(v,z) = \sum_{n \in \mathbb{Z}} A_n^v z^{-n}$
  where $A_n^v \in {\rm End}(V)$. 
\end{itemize} 
satisfying the following axioms:
\begin{itemize}
\item For all $v,v' \in V$ there exists an $N \gg 0$ such that $A_n^v
  v' = 0$ for all $n > N$. (This says that $Y(v,z)$ is a {\it field}
  for all $v$). 
\item (vacuum axiom) $Y(\vac, z) = {\rm id}_V$ and
    $Y(v,z)  |0\> \in v + z V \llbracket z \rrbracket$ for all
    $v \in V$. 
\item (translation) $[T,Y(v,z)] = \partial_z Y(v,z)$ for all $v \in
  V$. Moreover $T \vac = 0$. 
\item (locality) For all $v,v' \in V$, there exists $N \gg 0$ such
  that 
\[
(z-w)^N[Y(v,z),Y(v',w)] = 0
\]
in ${\rm End}(V) \llbracket z^{\pm 1},w^{\pm 1}\rrbracket$. 
\end{itemize}
\end{dfn}

In order to prove that a given $(V,\vac,T,Y)$ forms a vertex algebra, the following "reconstruction" or "extension" theorem is very useful. It shows that any collection of local fields generates a vertex algebra in a suitable sense. 

\begin{theorem}[~\cite{FBZ}, ~\cite{DSK}] \label{rec_thm} Let $V$ be a complex vector space equipped with: an
  element $\vac \in V$, a linear map $T : V \to V$, a 
    set of vectors $\{a^s\}_{s \in S} \subset V$ indexed by a set $S$, and fields $A^s(z) =
    \sum_{n \in \mathbb{Z}} A_n^sz^{-n-1}$ for each $s\in S$ such that:
\begin{itemize}
\item For all $s \in S$, $A^s(z) \vac \in a^s + z V\llbracket
    z\rrbracket$;
\item $T \vac = 0$ and $[T,A^s(z)] = \partial_z A^s(z)$;
\item $A^s(z)$ are mutually local;
\item and $V$ is spanned by $\{A_{j_1}^{s_1} \cdots A_{j_m}^{s_m}
  |0\>\}$ as the $j_i's$ range over negative integers. 
\end{itemize}
Then, the data $(V,\vac, T,Y)$ defines a unique vertex algebra satisfying 
\[
Y(a^s,z) = A^s(z) .
\]
\end{theorem}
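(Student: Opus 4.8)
The plan is to build the state--field correspondence $Y$ by hand on a spanning set and then verify the four axioms, dealing with uniqueness first. For uniqueness I would use the standard $n$-th product identity that holds in any vertex algebra,
\[
Y\big(A^{s}_{-n-1} v, z\big) = \frac{1}{n!} : \big(\partial_z^{\,n} A^{s}(z)\big)\, Y(v,z) : ,
\]
where $:\;:$ is the normal-ordered product, obtained by splitting each generating field into its creation part $\sum_{j<0}A^s_j z^{-j-1}$ and annihilation part $\sum_{j\ge 0}A^s_j z^{-j-1}$. Since by hypothesis $V$ is spanned by the vectors $A^{s_1}_{j_1}\cdots A^{s_m}_{j_m}\vac$ with all $j_i<0$, and since the creation property forces $a^s = A^s_{-1}\vac$, iterating this identity expresses $Y(v,z)$ on every spanning vector purely in terms of the $A^s(z)$. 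Hence at most one vertex algebra structure with $Y(a^s,z)=A^s(z)$ can exist.

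For existence I would turn this into a definition: for a spanning vector $v = A^{s_1}_{-n_1-1}\cdots A^{s_k}_{-n_k-1}\vac$ with each $n_i \ge 0$, set
\[
Y(v,z) := \frac{1}{n_1!\cdots n_k!}\, :\partial_z^{\,n_1}A^{s_1}(z)\cdots \partial_z^{\,n_k}A^{s_k}(z): ,
\]
with normal ordering nested from the right, together with $Y(\vac,z)={\rm id}_V$. The routine part is then to check, by induction on $k$, that each $Y(v,z)$ is a genuine field, that translation covariance $[T,Y(v,z)]=\partial_z Y(v,z)$ propagates from the generators through normal ordering and $\partial_z$, and that $Y(v,z)\vac \in v + zV\llbracket z\rrbracket$; combined with translation this last point upgrades to the recovery formula $Y(v,z)\vac = e^{zT}v$. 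The essential technical input is Dong's Lemma, which ensures that normal-ordered products of mutually local fields stay mutually local, so that inductively all the $Y(v,z)$ are pairwise local.

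The main obstacle is well-definedness: the displayed formula refers to a specific monomial in the modes $A^s_j$, yet distinct monomials can represent the same vector $v$, so one must show the field depends only on $v$. I expect to settle this with Goddard's Uniqueness Theorem: any field $B(z)$ that is mutually local with all the generating fields and satisfies $B(z)\vac = e^{zT}v$ is uniquely determined by $v$, hence equals $Y(v,z)$. Because the previous step shows each candidate field is both local against the generators (via Dong) and satisfies $Y(v,z)\vac = e^{zT}v$, any two monomial expressions for the same $v$ yield fields that must coincide; this simultaneously establishes well-definedness and linearity of $v \mapsto Y(v,z)$.

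Finally I would assemble the axioms: the vacuum axiom and translation hold by construction and the recovery property, locality holds by the iterated Dong's Lemma, and the field property holds inductively; the required normalization $Y(a^s,z)=A^s(z)$ is the case $k=1,\ n_1=0$ of the defining formula. Together with the uniqueness argument of the first paragraph this produces the asserted unique vertex algebra. The delicate point throughout is bookkeeping the normal-ordering conventions carefully enough that Dong's Lemma and Goddard uniqueness apply cleanly; the remaining manipulations are mechanical.
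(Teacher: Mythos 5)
The paper does not actually prove this statement: it is quoted from \cite{FBZ} and \cite{DSK} and used as a black box, so there is no internal proof to compare against. Judged on its own, your sketch correctly reproduces the standard argument from those references (Kac's Existence Theorem / the De Sole--Kac reconstruction theorem): define $Y$ on the spanning monomials by iterated normal-ordered products of the $\partial_z^{n}A^{s}(z)$, get locality of all composite fields from Dong's Lemma, get the creation and translation properties by induction through the normal ordering, and -- the one genuinely nontrivial point, which you correctly isolate -- establish well-definedness and linearity via Goddard's uniqueness theorem, since the monomials are only assumed to span $V$ rather than form a basis. That last step is precisely what distinguishes the \cite{DSK} version (stated here) from the \cite{FBZ} version, where a basis is assumed and well-definedness is automatic; your uniqueness argument via the identity $Y(A^{s}_{-n-1}v,z)=\tfrac{1}{n!}:\!(\partial_z^{n}A^{s}(z))Y(v,z)\!:$ is also the standard one. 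One small caveat worth spelling out if you write this up in full: Goddard uniqueness as usually stated presupposes an ambient vertex algebra, so in the existence direction you must run the ``two fields agreeing on the vacuum and mutually local with all generators coincide'' argument directly on the collection of iterated normal-ordered products, using the spanning hypothesis to test against vectors $A^{s_1}_{j_1}\cdots A^{s_m}_{j_m}\vac$; this is a reformulation rather than a literal application of the theorem. With that understood, the proposal is sound.
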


\begin{rmk}
The version stated above appears in ~\cite{DSK}, and is slightly more general than the version stated in ~\cite{FBZ}. 
\end{rmk}

\subsection{The vertex algebras $V(\fghat)$ and $V(\fghat_{\R})$}

A number of vertex algebras are constructed from vacuum representations of affine Lie algebras and their generalizations. We proceed to review the vertex algebra structure on the affine Kac-Moody vacuum module $V(\fghat)$ and extend the construction to vacuum representations of $\fghat_{R}$, where $R=A[t,t^{-1}]$ for some $\CC$-algebra $A$.

\subsubsection{$V(\fghat)$}
Let $\fghat = \fg[t,t^{-1}] \oplus \CC \kk$ be the affine Kac-Moody algebra, $\fghat^{+} = \fg[t]$ denote the positive sub-algebra, and $\CC$ denote the trivial representation of $\fghat^{+}$.
For $J \in \fg$, denote $J \otimes t^n$ by $J_n$, and $1 \in \CC$ by $\vac$

It is well-known (see for instance ~\cite{FBZ}) that the induced vacuum representation
\[
V (\fghat) := \on{Ind}^{\fghat}_{\fghat^+} \CC := U(\fghat) \otimes_{U(\fg[t])} \CC
\] 
has a $\CC[\kk]$-linear vertex algebra structure, which is generated, in the sense of the above reconstruction theorem, by the fields
\[
J^i (z) := Y(J^{i}_{-1} \vac, z) = \sum_{n \in \mathbb{Z}} J^i_n z^{-n-1},
\]
where $\{ J^{i} \}$ is a basis for $\fg$. These satisfy the commutation relations
\[
[J^i(z), J^{k}(w)] = [J^{i}, J^{k}](w) \delta(z-w) +  \langle J^{i}, J^{k} \rangle \kk \partial_w \delta(z-w)
\]
where 
\[
\delta(z-w) = \sum_{m} z^{m} w^{-m-1} 
\]
The translation operator $T$ is determined by the properties
\[
T \vac =0, [T, J^i_n] = -n J^{i}_{n-1}.
\]

\begin{rmk}\label{rmk: level}
The construction above produces a generic version of the affine Kac-Moody vacuum module, in the sense that $\kk$ is not specialized to be a complex number. In the vertex algebra literature one typically specifies a level $K \in \mathbb{C}$, and defines $$V_K (\fghat) := \on{Ind}^{\fghat}_{\fg[t] \oplus \CC \kk} \CC,$$ where $\CC$ denotes the one-dimensional representation of $\fg[t] \oplus \CC \kk$ on which the first factor acts by $0$ and $\kk$ acts by $K$. We have an isomorphism 
\[
V(\fghat) / I \simeq V_K (\fg)
\]
where $I$ is the vertex ideal generated by $K \vac - \kk \vac$. $V(\fg)$ can therefore be viewed as a family of vertex algebras over $\on{spec}(\CC[\kk])$, with fiber $V_K(\fg)$ at $\kk=K$. 
\end{rmk}

\subsubsection{The generalized toroidal vertex algebra} \label{VR}

In this section we generalize the construction of the affine Kac-Moody vacuum module above to the Lie algebra $\fghat_R$, for $R=A[t,t^{-1}]$,  where $A$ is a commutative $\CC$--algebra. The construction specializes to $V(\fghat)$ for $A = \CC$. 


Let $A$ be a commutative $\CC$-algebra, $R=A[t,t^{-1}] := A \otimes \CC[t,t^{-1}]$, and $\fghat_{R}$ the Lie algebra (\ref{uce}). 
We have a Lie subalgebra 
\[
\fghat^{+}_{R} := \fg \tensor A [t] \oplus \Omega^1_{A[t]}/ d A[t] \hookrightarrow \fghat_R .
\]
Let
\begin{equation}
V(\fghat_{R}) := \on{Ind}^{\fghat_{R}}_{\fghat^+_{R}} \CC
\end{equation}
where $\CC$ denotes the trivial representation of $\fghat^+_R$. Our goal is to define the structure of a vertex algebra on $V(\fghat_R)$. 

The vacuum vector is simply $\vac := 1 \in \CC.$
The fields of the vertex algebra split into three classes and are defined as follows. 

\begin{align}
J_u(z) := Y(J \otimes u t^{-1} \vac,z ) & := \sum_{n \in \mathbb{Z}} (J  \otimes u t^{n}) z^{-n-1}, \\
K_{u \frac{dt}{t}} :=  Y(t^{-1} u dt \vac,z ) & := \sum_{n \in \mathbb{Z}} ( u t^{n-1} dt )  z^{-n}, \\
K_{t^{-1} \omega} := Y(t^{-1}  \omega \vac,z ) & := \sum_{n \in \mathbb{Z}}( t^{n} \omega )    z^{-n-1} 
\end{align}
where $J\in \fg, u \in A, \omega \in \Omega^{1}_A$. 

The commutation relations between these fields are easily checked to be
$$
[J^1_u (z), J^2_v (w)] = \left( [J^1, J^2]_{uv} (w) + \langle J^1, J^2 \rangle  K_{t^{-1} u dv} (w)      \right) \delta(z-w) + \langle J^1, J^2 \rangle K_{uv \frac{dt}{t}} (w) \partial_w \delta(z-w)
$$
with all other commutators $0$. 

The operator $T$, corresponding to the Lie derivative $L_{-\partial_t}$, is defined by
\[
T \vac =0, \; [T, J^i \otimes f t^n] = -n J^i \otimes f t^{n-1}, \; [T, f t^n dt] = -n f t^{n-1} dt, \; [T, t^n \omega] = -n t^{n-1} \omega
\]

\begin{theorem}
The above field assignments, together with $T$ equip $V(\fghat_{\R})$ with the structure of a vertex algebra.
\end{theorem}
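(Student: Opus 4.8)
The plan is to apply the reconstruction theorem (Theorem~\ref{rec_thm}) to the three families of fields $J_u(z)$, $K_{u\frac{dt}{t}}(z)$, and $K_{t^{-1}\omega}(z)$, taking as the generating set $S$ the vectors $J \otimes u t^{-1}\vac$ (with $J$ in a basis of $\fg$ and $u$ in a basis of $A$), $t^{-1} u\,dt\,\vac$ (with $u$ in a basis of $A$), and $t^{-1}\omega\,\vac$ (with $\omega$ in a basis of $\Omega^1_A$). First I would confirm that each power series is genuinely an $\End(V(\fghat_R))$-valued field, i.e.\ that its modes lower-truncate on every vector; this is immediate from the PBW description of $V(\fghat_R) = \on{Ind}^{\fghat_R}_{\fghat^+_R}\CC$, since sufficiently positive modes of each family lie in $\fghat^+_R$ and eventually annihilate any fixed vector. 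The creation and translation axioms are then routine: because $\fghat^+_R = \fg\otimes A[t]\oplus\Omega^1_{A[t]}/dA[t]$ acts trivially on $\vac$, all nonnegative modes kill $\vac$, so each field applied to $\vac$ lies in $a^s + z V(\fghat_R)\llbracket z\rrbracket$ with the correct leading term, while from the prescribed action of $T$ on modes (e.g.\ $[T, J\otimes f t^n] = -n\,J\otimes f t^{n-1}$) one checks termwise that $[T, A^s(z)] = \partial_z A^s(z)$, with $T\vac = 0$ by definition.

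The heart of the argument is locality, and the first task is to establish the stated commutation relations. I would expand $[J^1_u(z), J^2_v(w)] = \sum_{m,n}[J^1\otimes u t^m, J^2\otimes v t^n]\,z^{-m-1}w^{-n-1}$, insert the bracket of $\fghat_R$ from~(\ref{uce}) with $r = u t^m$ and $s = v t^n$, and then rewrite the central contribution $\overline{r\,ds}$ using the defining relations of $\Omega^1_R/dR$. The key relation, coming from $d(uv t^\ell)\equiv 0$, is $\overline{t^\ell d(uv)} = -\ell\,\overline{uv t^{\ell-1}dt}$, which lets one split the cocycle into a part built from $\overline{t^{m+n}\,u\,dv}$ and a part proportional to $\overline{uv t^{m+n-1}dt}$. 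Collecting the first into a factor of $\delta(z-w)$ and the weighted second into $\partial_w\delta(z-w)$, by means of the standard formal-distribution identities $\sum_{m,n}c_{m+n}z^{-m-1}w^{-n-1} = c(w)\delta(z-w)$ and its derivative analogue, reproduces the three-term formula; all remaining brackets vanish because $\Omega^1_R/dR$ is central.

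With the commutation relations in this form, locality is automatic: multiplying by $(z-w)^2$ annihilates both $\delta(z-w)$ and $\partial_w\delta(z-w)$, so $N = 2$ works for every pair, and pairs with vanishing bracket are local trivially. It then remains to verify the spanning hypothesis. By PBW, $V(\fghat_R)$ is spanned by ordered monomials in a vector-space complement of $\fghat^+_R$ inside $\fghat_R$ applied to $\vac$. The negative modes of the three families are exactly $J\otimes u t^{-k}$, $u t^{-k}dt$, and $t^{-k}\omega$ for $k\geq 1$; these visibly span the negative-power part of $\fg\otimes R$, and, using the decomposition $\Omega^1_R = (\Omega^1_A\otimes\CC[t,t^{-1}])\oplus(A\otimes\CC[t,t^{-1}]\,dt)$, their classes span a complement of $\Omega^1_{A[t]}/dA[t]$ in $\Omega^1_R/dR$. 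This yields the spanning condition and completes the hypotheses of Theorem~\ref{rec_thm}.

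I expect the main obstacle to be the central-term bookkeeping in the commutation relations: translating the cocycle $\overline{r\,ds}$ through the relations in $\Omega^1_R/dR$ so that the two central fields $K_{t^{-1}u\,dv}$ and $K_{uv\frac{dt}{t}}$ appear with the correct coefficients against $\delta(z-w)$ and $\partial_w\delta(z-w)$. This is precisely where the conventions must be handled consistently---the definition of $\delta(z-w)$, the shift built into the mode expansion of $K_{u\frac{dt}{t}}$, and the K\"{a}hler-differential relations---and the remaining axioms reduce to reindexing once this computation is pinned down.
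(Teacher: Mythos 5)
Your proposal is correct and follows the same route as the paper: both reduce the statement to the reconstruction theorem (Theorem~\ref{rec_thm}) applied to the three families of fields, and your derivation of the commutation relations via the relations $\overline{d(uv\,t^\ell)}=0$ in $\Omega^1_R/dR$ is exactly the bookkeeping the paper relies on. The only step the paper isolates explicitly that you fold in implicitly is the well-definedness of the field assignment, namely the verification that $Y(d(t^{-1}u)\vac,z)=0$, equivalently that $K_{t^{-1}du}(z)=\partial_z K_{u\frac{dt}{t}}(z)$, which again reduces to the relation $\overline{d(ut^n)}=0$.
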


\begin{proof}
We begin by checking that the field assignment above is well-defined. This amounts to verifying that $Y(d(t^{-1}u)\vac,z ) =0$.  We have 
\begin{align*}
Y(d(t^{-1}u) \vac, z) &= Y(t^{-1} du \vac, z) - Y(f t^{-2} u \vac, z) \\
&= Y(t{-1} du \vac, z) - Y([T, t^{-1}u dt  ] \vac, z) \\
&= Y(t^{-1} du \vac, z) - \partial_z Y( t^{-1} u dt \vac, z) \\
&= \sum_{n} (t^n du + n t^{n-1} u) z^{-n-1} = \sum_{n} d(t^n u) z^{-n-1} = 0
\end{align*}
The result now follows by applying the reconstruction theorem \ref{rec_thm} to $V(\fghat_\R)$ and the fields $\{ J_u (z), K_{u \frac{dt}{t}}, K_{t^{-1} \omega} \}$ for $J \in \fg, f \in A, \omega \in \Omega^{1}_A $.
\end{proof}

The map sending a $\CC$-algebra $A$ to $V(\fghat_{R})$, with $R=A[z,z^{-1}]$ has a number of pleasing properties.  Denote by $\CC-\on{Alg}$ the category of commutative $\CC$-algebras and $\on{Vert}$ the category of vertex algebras.  We have the following result:

\begin{prop} \label{functoriality}
The map
\begin{align*}
\on{\CC-Alg} & \to \on{Vert} \\
A & \mapsto V(\fghat_{A[z,z^{-1}]})
\end{align*}
defines a functor. 
\end{prop}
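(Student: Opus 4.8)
The plan is to promote the assignment $A \mapsto V(\fghat_{A[z,z^{-1}]})$ to a functor by first producing, for each morphism $f : A \to B$ of commutative $\CC$-algebras, a morphism of the underlying Lie algebras that respects the triangular decomposition, and then showing that the induced-module construction carries such a morphism to a morphism of vertex algebras. Set $R = A[z,z^{-1}]$ and $R' = B[z,z^{-1}]$, and observe that $f$ extends to an algebra map $f_* : R \to R'$ acting on coefficients and fixing $z$. By functoriality of K\"{a}hler differentials, $f_*$ induces a $\CC$-linear map $\Omega^1_R \to \Omega^1_{R'}$, $r\,\d s \mapsto f_*(r)\,\d f_*(s)$, which commutes with $\d$ and hence descends to $\Omega^1_R/\d R \to \Omega^1_{R'}/\d R'$. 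Together with $\mathrm{id}_\fg \otimes f_*$ on $\fg \otimes R$, this defines a linear map $\hat f : \fghat_R \to \fghat_{R'}$ on the explicit model (\ref{uce}). That $\hat f$ is a Lie algebra homomorphism is immediate from the bracket formula: $f_*$ is multiplicative and commutes with $\d$, so it carries $\frac{1}{2}\overline{\langle J, J'\rangle(r\,\d s - s\,\d r)}$ to the corresponding cocycle term for $R'$, while the invariant form on $\fg$ is untouched. (When $\fg$ is simple one could instead invoke the universal property of the central extension of the perfect Lie algebra $\fg_R$ to obtain $\hat f$ canonically, but the explicit check works for any $\fg$ carrying an invariant form.)

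Next I would check that $\hat f$ preserves the polarization. Since $f_*(a z^n) = f(a) z^n$ sends nonnegative powers of $z$ to nonnegative powers, and the analogous statement holds on the differential term $\Omega^1_{A[z]}/\d A[z] \to \Omega^1_{B[z]}/\d B[z]$, we have $\hat f(\fghat^+_R) \subseteq \fghat^+_{R'}$. Consequently $\hat f$ intertwines the trivial one-dimensional representations, and induction yields a map of vacuum modules $V(f) : V(\fghat_R) \to V(\fghat_{R'})$ sending $\vac \mapsto \vac$ and acting through $\hat f$ on each tensor factor of a PBW monomial.

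It remains to verify that $V(f)$ is a morphism of vertex algebras, and here I would lean on the reconstruction theorem \ref{rec_thm}. The vertex algebra $V(\fghat_R)$ is generated by the fields $J_u(z)$, $K_{u\frac{\d t}{t}}$, $K_{t^{-1}\omega}$ whose Fourier modes are precisely the Lie algebra elements $J \otimes u t^n$, $u t^{n-1}\,\d t$, $t^n \omega$. Because $V(f)$ is built from the Lie algebra morphism $\hat f$, it intertwines these modes, hence carries generating fields to generating fields ($J_u(z) \mapsto J_{f(u)}(z)$, and similarly for the two families of $K$-fields) and is compatible with the operator product expansions recorded in the commutation relations. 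The translation operator is intertwined as well, since $T$ is pinned down by the commutators $[T, J\otimes f t^n] = -n\,J\otimes f t^{n-1}$ and their analogues, all manifestly natural in $A$. Since a vertex algebra morphism is determined by its effect on a generating set of fields together with $\vac$ and $T$, these checks suffice. Finally, functoriality---$V(\mathrm{id}_A) = \mathrm{id}$ and $V(g \circ f) = V(g)\circ V(f)$---follows from the corresponding identities for $f \mapsto \hat f$ (themselves consequences of the functoriality of $f_*$ and of the K\"{a}hler-differential functor) combined with functoriality of induction.

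The main obstacle I anticipate is the last step: confirming that a Lie algebra morphism respecting the triangular decomposition genuinely upgrades to a \emph{vertex algebra} morphism, rather than merely a morphism of $\fghat_R$-modules or of graded vector spaces. The cleanest route is to reduce everything to the generating fields via reconstruction, where the required compatibilities become the statement that the mode operators are intertwined---exactly what $\hat f$ being a Lie algebra map provides. One should also check that the well-definedness argument used in the preceding theorem (the vanishing $Y(\d(t^{-1}u)\vac,z) = 0$) is preserved under $\hat f$, but this is automatic since $\hat f$ commutes with $\d$.
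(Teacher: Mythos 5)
Your proof is correct and follows essentially the same route as the paper: both construct the Lie algebra homomorphism $\fghat_{A[t,t^{-1}]} \to \fghat_{B[t,t^{-1}]}$ from $\psi$ and the functoriality of K\"{a}hler differentials, then check that the induced map of vacuum modules intertwines the generating fields and their OPEs. Your version is in fact slightly more complete than the paper's, since you explicitly verify preservation of the positive subalgebra and the identities $V(\mathrm{id})=\mathrm{id}$, $V(g\circ f)=V(g)\circ V(f)$, which the paper leaves implicit.
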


\begin{proof}
We must check that a homomorphism of $\CC$-algebras $\psi: A \to B$ induces a vertex algebra homomorphism $ \wt{\psi}: V(\fghat_{A[t,t^{-1}]}) \to V(\fghat_{B[t,t^{-1}]})$. 
We begin by constructing a Lie algebra homomorphism $$\ol{\psi}: \fghat_{A[t,t^{-1}]} \to \fghat_{B[t,t^{-1}]}.$$
Recall that a $\CC$-algebra homomorphism $\sigma: R \to S$ induces a map on Kahler differentials (as vector spaces) $\sigma_*: \Omega_R \to \Omega_S$ given by 
$ \sigma_* ( r dr') = \sigma(r) d \sigma(r') $, which sends exact elements to exact elements, inducing a map
\[
\sigma_* : \Omega_R / dR \to \Omega_S / dS
\]
Extending $\psi: A \to B$ to a homomorphism (abusively also denoted $\psi$) $\psi: A[t,t^{-1}] \to B[t,t^{-1}]$, and taking $\sigma=\psi$ yields a map
\[
\psi_*: \Omega_{A[t,t^{-1}]} / d A[t,t^{-1}] \to  \Omega_{B[t,t^{-1}]} / d B[t,t^{-1}]
\]
Now, $\ol{\psi}: \fghat_{A[t,t^{-1}]} \to \fghat_{B[t,t^{-1}]}$ is defined by 
\[
\ol{\psi}( J u t^n + \omega) = J \psi(u) t^n + \psi_* \omega \; \; \;  \; \textrm{ where } J \in \fg, \;  u \in A, \; \omega \in \Omega_{A[t,t^{-1}]} / d A[t,t^{-1}] 
\]
and easily checked to be a Lie algebra homomorphism. Finally, $$ \wt{\psi}: V(\fghat_{A[t,t^{-1}]}) \to V(\fghat_{B[t,t^{-1}]})$$ may be defined on the generating fields $J_u (z), K_{u \frac{dt}{t}}, K_{t^{-1} \omega}$ in the obvious way by:
\begin{align*}
\wt{\psi} (J_u (z)) :=  \sum_{n \in \mathbb{Z}} (J  \otimes \psi(u) t^{n}) z^{-n-1}, \\
\wt{\psi} (K_{u \frac{dt}{t}} (z)) := K_{\psi_* (u \frac{dt}{t}) }(z) =  \sum_{n \in \mathbb{Z}} ( \psi(u) t^{n-1} dt )  z^{-n}, \\
\wt{\psi} (K_{t^{-1} \omega}(z)) := K_{ \psi_* (t^{-1} \omega)}(z) =  \sum_{n \in \mathbb{Z}}( t^{n} \psi_* \omega )    z^{-n-1} 
 \end{align*}
where $u \in A, \omega \in \Omega_A$. One easily checks that $$ \wt{\psi}([J_u(z), J'_v (w)]) = [\wt{\psi}(J_u (z)), \wt{\psi}(J'_v (w)) ], $$ which shows that $\wt{\psi}$ respects the only non-trivial OPE among the generating fields. It follows that $\wt{\psi}$ is a vertex algebra homomorphism. 
\end{proof}

If $A$ is any $\CC$-algebra, we may apply this result to the structure map $\CC \to A$, to obtain:

\begin{cor}
The structure map $\CC \to A$ induces an embedding of vertex algebras $$V(\fghat ) \to V(\fghat_{A[t,t^{-1}]}).$$
\end{cor}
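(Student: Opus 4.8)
The plan is to obtain the map from Proposition \ref{functoriality} and then check that it is injective; the proposition already supplies everything except injectivity. Applying the functor of Proposition \ref{functoriality} to the structure map $\iota \colon \CC \to A$, and using that the case $A = \CC$ recovers $V(\fghat_{\CC[t,t^{-1}]}) = V(\fghat)$, yields a homomorphism of vertex algebras
\[
\wt{\iota} \colon V(\fghat) \to V(\fghat_{A[t,t^{-1}]}).
\]
So the entire content to be verified is that $\wt{\iota}$ is injective.

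First I would record that the underlying Lie algebra map $\ol{\iota}\colon \fghat \to \fghat_{A[t,t^{-1}]}$ (built as in the proof of Proposition \ref{functoriality}) is injective. On the non-central summand it is $J \otimes f \mapsto J \otimes \iota(f)$, which is injective because $\iota \colon \CC[t,t^{-1}] \hookrightarrow A[t,t^{-1}]$ is injective (here one uses $1_A \neq 0$). On the central summand it is the induced map $\iota_* \colon \Omega^1_{\CC[t,t^{-1}]}/\d \to \Omega^1_{A[t,t^{-1}]}/\d$; the source is one-dimensional, spanned by the class of $t^{-1}\d t$, and its image is the class of $t^{-1}\d t$ in the target. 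This class is nonzero because the residue, i.e. the $A$-valued coefficient of $t^{-1}$ in the $\d t$-component, vanishes on exact forms and takes the value $1_A \neq 0$ on $t^{-1}\d t$. Since the two summands map into the analogous summands, $\ol{\iota}$ is injective.

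Next I would pass from $\ol{\iota}$ to $\wt{\iota}$ via the Poincar\'e--Birkhoff--Witt (PBW) theorem. The map $\ol{\iota}$ preserves $t$-degree, hence carries the positive subalgebra $\fghat^+ = \fg[t]$ into $\fghat^+_{A[t,t^{-1}]}$ and a complementary ``negative'' subspace $\fghat^-$ (spanned by the modes $J \otimes t^{-n}$ with $n \geq 1$, together with $\kk$) into a complementary negative subspace of $\fghat_{A[t,t^{-1}]}$: indeed $\kk \mapsto [t^{-1}\d t]$ lands in the negative part of the centre, since $t^{-1}\d t$ involves a strictly negative power of $t$ and so is not contained in $\Omega^1_{A[t]}/\d A[t]$. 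By PBW one has $V(\fghat) \cong U(\fghat^-)$ and $V(\fghat_{A[t,t^{-1}]}) \cong U(\fghat^-_{A[t,t^{-1}]})$ as vector spaces, and under these identifications $\wt{\iota}$ becomes $U(\ol{\iota}|_{\fghat^-})$. Completing an ordered basis of $\ol{\iota}(\fghat^-)$ to an ordered basis of $\fghat^-_{A[t,t^{-1}]}$, the PBW monomials of $U(\fghat^-)$ are sent to distinct elements of the PBW basis of $U(\fghat^-_{A[t,t^{-1}]})$, so $U(\ol{\iota}|_{\fghat^-})$, and hence $\wt{\iota}$, is injective.

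The main obstacle I anticipate is the bookkeeping for the central term: confirming both that $[t^{-1}\d t]$ is nonzero in $\Omega^1_{A[t,t^{-1}]}/\d A[t,t^{-1}]$ (via the $A$-valued residue) and that the positive--negative splitting of $\fghat_{A[t,t^{-1}]}$ used in the PBW step is genuinely compatible with $\ol{\iota}$, i.e. that $\Omega^1_{A[t]}/\d A[t]$ sits inside $\Omega^1_{A[t,t^{-1}]}/\d A[t,t^{-1}]$ as the non-negative part with $[t^{-1}\d t]$ in a complement. Once this $t$-graded decomposition of the centre is pinned down, the remaining PBW argument is routine.
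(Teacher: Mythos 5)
Your proposal is correct, and it actually does more work than the paper: the paper offers no proof of this corollary at all, simply asserting it as an immediate application of Proposition \ref{functoriality} to the structure map $\CC \to A$, and in particular never justifies the word ``embedding.'' You supply exactly the missing injectivity argument, and both of its ingredients check out. The residue computation showing $[t^{-1}\d t] \neq 0$ in $\Omega^1_{A[t,t^{-1}]}/\d A[t,t^{-1}]$ is sound (exact forms $\d(at^n) = t^n\,\d a + nat^{n-1}\,\d t$ have vanishing $t^{-1}\d t$-coefficient), and the $t$-graded splitting of the centre you need --- $\Omega^1_{A[t]}/\d A[t]$ as the non-negative part with $[t^{-1}\d t]$ in a genuine complement --- is precisely the decomposition $\Omega^1_S/\d S \cong \Omega^1_{S^+}/\d S^+ \oplus \Omega^1_{S^-}/\d S^-$ that the paper itself establishes later, in the proof of Lemma \ref{coh_fa}; citing that would shorten your bookkeeping. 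One small point worth making explicit in the PBW step: the negative subspaces $\fghat^-$ and $\fghat^-_{A[t,t^{-1}]} = \fg \otimes S^- \oplus \Omega^1_{S^-}/\d S^-$ are actually Lie subalgebras (the cocycle $\overline{r\,\d s}$ of two strictly negative modes again lies in the negative part of the centre), which is what licenses the identifications $V \cong U(\fghat^-)$ as $\fghat^-$-modules and the ``extend an ordered basis'' conclusion. With that noted, your argument is complete.
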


\begin{rmk}
As explained in Remark \ref{rmk: level}, we have $$V_K (\fg) \simeq V(\fghat_{\CC[t,t^{-1}]})/I, $$ where $V_K (\fg)$ denotes the "usual" affine vacuum module at level $K \in \CC$, and $I$ is the vertex ideal generated by ${\bf k} \vac - K \vac$. When $K \neq - h^{\vee}$ (where $h^{\vee}$ denotes the dual Coxeter number of $\fg$), $V_K (\fg)$ is a conformal vertex algebra with conformal Segal-Sugawara vector 
\[
S = \frac{1}{2(K + h^{\vee})} \sum^d_{i=1} (J_{i} \otimes t^{-1})(J_i \otimes t^{-1}) \vac, 
\]
where $\{ J_i \}^d_{i=1}$ an orthonormal basis of $\fg$ with respect to the invariant pairing $\langle \bullet, \bullet\rangle$. It follows that when $K \neq - h^{\vee}$, $S$ defines a conformal vector in $V(\fghat_{A[t,t^{-1}]}) / I'$, where $I'$ is the vertex ideal generated by $\frac{dt}{t} \vac - K \vac$. 
 
\end{rmk}

As another consequence of Prop. \ref{functoriality}, we note that any algebra automorphism $\psi: A \to A$ induces a vertex algebra automorphism of $V(\fghat_{A[t,t^{-1}]})$:

\begin{cor}
There is a natural group homomorphism
\begin{align*}
\on{Aut}(A) & \to \on{Aut}(V(\fghat_{A[t,t^{-1}]})) \\
\psi & \to \wt{\psi} .
\end{align*}
\end{cor}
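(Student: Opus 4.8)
The plan is to deduce this directly from Proposition \ref{functoriality}: that result exhibits the assignment $A \mapsto V(\fghat_{A[t,t^{-1}]})$ as a functor $F$ from commutative $\CC$-algebras to $\on{Vert}$, and a basic categorical fact is that any functor restricts to a group homomorphism on the automorphism group of a fixed object. So the content of the corollary is essentially already packaged in functoriality, and the proof amounts to unwinding what ``functor'' means on morphisms.

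First I would make the two functor axioms explicit, since these are exactly what the corollary requires. On generators, $\wt\psi$ is determined by $\psi$ acting on the $\fg \otimes A$ part and by the induced map $\psi_*$ on the K\"ahler-differential part. Because $\psi \mapsto \psi_*$ is itself functorial, i.e. $(\psi \circ \phi)_* = \psi_* \circ \phi_*$ on $\Omega_{A[t,t^{-1}]}/dA[t,t^{-1}]$ and $(\psi\circ\phi)$ agrees with $\psi\circ\phi$ on $A[t,t^{-1}]$, comparing the defining formulas for $\wt\psi$ on the three classes of generating fields $J_u(z)$, $K_{u\frac{dt}{t}}(z)$, $K_{t^{-1}\omega}(z)$ yields
\[
\wt{\psi \circ \phi} = \wt\psi \circ \wt\phi, \qquad \wt{\on{id}_A} = \on{id}_{V(\fghat_{A[t,t^{-1}]})}.
\]
Given these, if $\psi \in \on{Aut}(A)$ with inverse $\psi^{-1}$, then $\wt\psi \circ \wt{\psi^{-1}} = \wt{\psi\circ\psi^{-1}} = \wt{\on{id}_A} = \on{id}$ and likewise in the other order, so $\wt\psi$ is invertible and hence a vertex algebra automorphism; the first displayed identity is precisely the assertion that $\psi \mapsto \wt\psi$ is a group homomorphism.

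I do not expect any genuine obstacle here: everything reduces to the elementary functoriality of $\psi \mapsto \psi_*$ on K\"ahler differentials, which is standard, together with the already-verified fact that each $\wt\psi$ is a vertex algebra homomorphism. The one point requiring a little care is that $\wt\psi$ was specified only on the generating fields rather than on all of $V(\fghat_{A[t,t^{-1}]})$, so strictly one checks the two functor identities on generators and then invokes uniqueness in the reconstruction theorem \ref{rec_thm} to conclude that they hold as identities of vertex algebra endomorphisms.
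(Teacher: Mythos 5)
Your proposal is correct and follows the same route as the paper, which states this corollary without proof as an immediate consequence of Proposition \ref{functoriality}: a functor induces a group homomorphism on automorphism groups. Your explicit verification of the two functor identities on generating fields (plus the appeal to uniqueness in the reconstruction theorem) simply fills in details the paper leaves implicit.
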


\section{(Pre)factorization algebras and examples} \label{fact_alg_section}

In this section we recall basic notions pertaining to pre-factorization algebras. We refer the reader to \cite{CG} for details. 

Let $X$ be a smooth manifold, and $\symcat$ a symmetric monoidal category. 

\begin{dfn}
A {\it prefactorization algebra} $\Fcal$ on $X$ with values in $\symcat$ consists of the following data:
\begin{itemize}
\item for each open $U \subset M$, an object $\Fcal(U) \in \symcat $,
\item for each finite collection of pairwise disjoint opens $U_1,\ldots,U_n$ and an open $V$ containing every $U_i$, a morphism
\begin{equation} \label{struct_maps}
m^{U_1, \cdots, U_n}_V: \Fcal(U_1) \otimes \cdots \otimes \Fcal(U_n) \to \Fcal(V),
\end{equation}
\end{itemize}
and satisfying the following conditions:
\begin{itemize}
\item composition is associative, so that the triangle
\[
\begin{tikzcd}
\bigotimes_i \bigotimes_j \Fcal(T_{ij}) \arrow{rr} \arrow{rd} &&\bigotimes_i \Fcal(U_{i}) \arrow{ld} \\
&\Fcal(V)&
\end{tikzcd}
\]
commutes for any disjoint collection $\{U_i\}$ contained in $V$, and disjoint collections $\{T_{ij}\}_j \subset U_i$ 
\item the morphisms $m^{U_1, \cdots, U_n}_V$ are equivariant under permutation of labels, so that the triangle
\[
\begin{tikzcd}
\Fcal(U_{1}) \otimes \cdots \otimes \Fcal(U_n) \arrow{rr}{\simeq} \arrow{rd} && \Fcal(U_{\sigma(1)}) \otimes \cdots \otimes \Fcal(U_{\sigma(n)}) \arrow{ld}\\
&\Fcal(V)&
\end{tikzcd}
\]
commutes for any $\sigma \in S_n$.
\end{itemize}
\end{dfn}

In this paper, we will take the target category $\symcat$ to be $\Vect$, $\dgVect$, or their smooth enhancements $\DVS$ described below.  

A {\it factorization algebra} is
a prefactorization algebra satisfying a descent (or gluing) axiom with respect to a class of special covers called \emph{Weiss covers}. In this paper, we will not be concerned with verifying that our constructions satisfy this additional property, and refer the interested reader to ~\cite{CG} for details. 

\begin{eg}[\cite{CG}] \label{ass_eg}
Given an associative algebra over $\CC$, one can construct a prefactorization algebra $\cF_{A}$ in $\Vect$ on $\RR$ by declaring $\cF_{A}(I) = A$ for a connected open interval $I \subset \RR$, and defining the structure maps in terms of the multiplication on $A$. For instance, if $I=(a,b), J=(c,d), K=(e,f)$, with $e < a < b < c < d < f$, the structure map is
\begin{align*}
\cF_A (I) \otimes \cF_A (J) & \mapsto \cF_A (K) \\
a \otimes b &\mapsto ab
\end{align*}
$\cF_A$ has the property that it is \emph{locally constant}, in the sense that if $I \subset I'$ are connected intervals, then $\cF_A (I) \mapsto \cF_A (I')$ is an isomorphism. It is shown in \cite{CG} that locally constant prefactorization algebras on $\RR$ in $\Vect$ correspond precisely to associative algebras. 
\end{eg} 

Prefactorization algebras can be pushed forward under smooth maps as follows. Suppose $f: X \mapsto Y$ is a smooth map of smooth manifolds, and $\cF$ a prefactorization algebra on $X$. One then defines the prefactorization algebra $f_* \cF$ on $Y$ by
\[
f_* \cF (U) := \cF(f^{-1}(U))
\] 
The structure maps of $f_* \cF$ are defined in the obvious way. 

If $\cF, \cG$ are prefactorization algebras on $X$ with values in $\symcat$, then a morphism $\phi: \cF \to \cG$ is the data of maps $$\phi_U : \cF(U) \to \cG(U)  \in \on{Hom}_{\symcat} (\cF(U), \cG(U))$$ for each open $U \subset X$, compatible with all structure maps (\ref{struct_maps}).

\subsection{(Pre)factorization envelopes}

Our construction of factorization algebras from holomorphic fibrations is an instance of the \emph{factorization envelope} construction, which we proceed to briefly review following \cite{CG}. 

Let $\mc{L}$ be a fine sheaf of $L_{\infty}$ algebras, and $\mc{L}_c$ its associated cosheaf of sections with compact support. The \emph{factorization envelope of $\mc{L}$}, $\U \mc{L}$ is the complex of Chevalley chains of $\mc{L}_c$.  In other words, for each open $U \subset X $
\begin{equation} \label{fact_envelope}
\U \mc{L} (U) :=  \Ch(\mc{L}_c (U))
\end{equation}
The factorization structure maps are given explicitly as follows. Let $U_1, \cdots, U_k$ be disjoint open subsets of an open $V \subset X$. The cosheaf $\mc{L}_c$  induces a map of $L_{\infty}$--algebras
\[
\oplus^k_{i=1} \mc{L}_c (U_i) \mapsto \mc{L}_c (V)
\]
Applying the Chevalley chains functor (which sends sums to tensor products) to this sequence yields maps
\[
\otimes^k_{i=1} \Ch ( \mc{L}_c (U_i)) \mapsto \Ch (\mc{L}_c (V)).
\]
The following is proved in \cite{CG}

\begin{theorem}
If $\mc{L}$ is a fine cosheaf of $L_{\infty}$ algebras, then $\U \mc{L}$ is a prefactorization algebra in $\dgVect$. The cohomology $H^*(\U \mc{L})$ is a prefactorization algebra in $\Vect$. 
\end{theorem}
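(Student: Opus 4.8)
The plan is to build the prefactorization structure maps out of three functorial ingredients and then verify that the resulting data satisfies the associativity and equivariance axioms. The three ingredients are: (i) the extension-by-zero maps of the cosheaf $\mc{L}_c$; (ii) the functoriality of the Chevalley-chains functor $\Ch$ on $\Linf$-algebras; and (iii) the monoidal compatibility $\Ch(\fh_1 \oplus \fh_2) \cong \Ch(\fh_1) \otimes \Ch(\fh_2)$ coming from the identification $\Sym((\fh_1 \oplus \fh_2)[1]) \cong \Sym(\fh_1[1]) \otimes \Sym(\fh_2[1])$ of cocommutative coalgebras.

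First, for pairwise disjoint opens $U_1,\dots,U_k \subset V$, I would use that compactly supported sections on a disjoint union split as a direct sum, $\mc{L}_c(U_1 \sqcup \cdots \sqcup U_k) \cong \bigoplus_{i=1}^k \mc{L}_c(U_i)$, and compose with extension by zero to obtain a map $\bigoplus_i \mc{L}_c(U_i) \to \mc{L}_c(V)$. Applying $\Ch$ and ingredient (iii) then yields the structure map
\[
\bigotimes_{i=1}^k \U\mc{L}(U_i) \;=\; \bigotimes_{i=1}^k \Ch(\mc{L}_c(U_i)) \;\cong\; \Ch\Big(\bigoplus_i \mc{L}_c(U_i)\Big) \;\longrightarrow\; \Ch(\mc{L}_c(V)) \;=\; \U\mc{L}(V).
\]
That each $\U\mc{L}(U)$ lies in $\dgVect$ is immediate, since $\Ch$ of an $\Linf$-algebra is by definition a cochain complex.

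The heart of the argument is to check that the extension map $\bigoplus_i \mc{L}_c(U_i) \to \mc{L}_c(V)$ is a genuine morphism of $\Linf$-algebras and that ingredient (iii) is an isomorphism of \emph{cochain complexes}, not merely of graded vector spaces. Both hinge on the same locality phenomenon: the $\Linf$ operations $l_n$ on $\mc{L}_c(V)$ are local, so they vanish on any tuple of sections with pairwise disjoint supports. Consequently the only surviving operations on the image of $\bigoplus_i \mc{L}_c(U_i)$ are those internal to a single summand, which is exactly the direct-sum $\Linf$-structure; this makes the extension map an $\Linf$-morphism. The same vanishing of mixed terms shows that the differential on $\Ch(\bigoplus_i \mc{L}_c(U_i))$ decomposes as the tensor-product (Leibniz) differential on $\bigotimes_i \Ch(\mc{L}_c(U_i))$, so (iii) is a cochain isomorphism. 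I expect this verification, that disjointness of supports kills all mixed $\Linf$-operations, to be the main technical point; everything else is formal. With the structure maps in hand, associativity and $S_n$-equivariance follow from coherence: given disjoint $\{T_{ij}\}_j \subset U_i \subset V$, both composites in the associativity triangle reduce, after applying $\Ch$, to the single extension map $\bigoplus_{i,j} \mc{L}_c(T_{ij}) \to \mc{L}_c(V)$, because extension by zero is transitive and the isomorphism (iii) is coherently associative, while equivariance under $\sigma \in S_n$ follows because permuting labels permutes the summands of $\bigoplus_i \mc{L}_c(U_i)$ compatibly with the symmetry of (iii).

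Finally, to pass to cohomology I would apply the functor $H^*$ together with the Künneth isomorphism, available because we work over the field $\CC$: $H^*\big(\bigotimes_i \U\mc{L}(U_i)\big) \cong \bigotimes_i H^*(\U\mc{L}(U_i))$. Composing the inverse Künneth map with $H^*$ of the $\dgVect$ structure maps yields structure maps for $H^*(\U\mc{L})$ in $\Vect$, and their associativity and equivariance are inherited from those of $\U\mc{L}$ via naturality and coherence of Künneth. Hence $H^*(\U\mc{L})$ is a prefactorization algebra in $\Vect$.
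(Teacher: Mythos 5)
Your proposal is correct and follows essentially the same route the paper takes (the paper itself only sketches the argument and defers to Costello--Gwilliam): extension by zero on the cosheaf $\mc{L}_c$, the identification $\Ch(\oplus_i \fh_i) \cong \otimes_i \Ch(\fh_i)$, and functoriality of $\Ch$ produce the structure maps, with associativity and equivariance following formally. Your added observations --- that disjointness of supports kills all mixed $L_\infty$-operations so that extension by zero is a strict $L_\infty$-morphism, and that the K\"unneth isomorphism over $\CC$ lets the structure descend to cohomology --- are exactly the details the paper leaves implicit.
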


\begin{eg}[\cite{CG}] \label{gdeRham}
Let $X=\mathbb{R}$, and $\fg_{dR} := (\fg \otimes \Omega^{*}_{\mathbb{R}}, d_{dR})$ the sheaf of DGLA's on $\mathbb{R}$ obtained by tensoring $\fg$ with the de Rham complex.
The factorization envelope $\U(\fg_{dR})$ is locally constant, and the cohomology factorization algebra $H^*( \U (\fg \otimes \Omega^*))$ is a locally constant factorization algebra in $\Vect$, corresponding to the enveloping algebra $\mc{U}(\fg)$ as in Example (\ref{ass_eg}).  
\end{eg}

\begin{eg}[\cite{GWkm}] \label{gDolbeault}
Let $X = \CC^n$, and $(\fg \otimes \Omega^{0,*}_{\CC^n}, \delbar )$ the sheaf of DGLA's on $\CC^n$ obtained by tensoring $\fg$ with the Dolbeault complex of forms of type $(0,q), q \geq 0$. As explained below, when $n=1$, the factorization algebra $\U(\fg \otimes \Omega^{0,*}_{\CC}, \delbar )$ allows one to recover the affine vertex algebra $V(\fghat)$ (at level 0). 
\end{eg}

\subsection{Differentiable vector spaces} \label{DVS}

The prefactorization algebras considered in  this paper typically assign to each open subset $U \subset X$  a cochain complex of infinite-dimensional vector spaces. This is apparent already in the example \ref{gdeRham} above, where the graded components of $\U(\fg_{dR}) = \U (\fg \otimes \Omega^*)(U)$  for $U \subset \mathbb{R}$ are tensors in $\fg \otimes \Omega^*(U)_c$. The structure maps (\ref{struct_maps}) are thus multilinear maps between such complexes. In order to formulate the notion of translation-invariance for prefactorization algebras in the next section, we will have to discuss what it means for these to depend smoothly on the positions of the open sets $U_i \subset X$. This raises some functional-analytic issues, which in turn complicate homological algebra involving these objects.
 
 In \cite{CG} these technical issues are resolved by introducing the category $\on{DVS}$ of \emph{Differentiable Vector Spaces} together with certain sub-categories. $\on{DVS}$ provides a flexible framework within which one can discuss smooth families of smooth maps between infinite-dimensional cochain complexes parametrized by auxilliary manifolds, and carry out homological constructions. We briefly sketch this category below, and refer to \cite{CG} for all details. 
 
\begin{dfn}
Let $C^{\infty}$ denote the sheaf of rings on the site of smooth manifolds sending each manifold $M$ to the ring of smooth functions $C^{\infty}(M)$, and assigning to each smooth map $f: M \rightarrow N$ the pullback $f^{*}: C^{\infty}(N) \rightarrow C^{\infty}(M)$.  A \emph{$C^{\infty}$-module} $\cF$ is a sheaf of modules over $C^{\infty}$. In other words, $\cF$ assigns to each $M$ a $C^{\infty}(M)$-module $\cF(M)$, and to $f: M \rightarrow N$ a pullback map $\cF(f): \cF(N) \rightarrow \cF(M)$ of $C^{\infty}(N)$-modules. A \emph{differentiable vector space} is a $C^{\infty}$-module equipped with a flat connection. Explicitly, this means a flat connection 
\[
\nabla: \cF(M) \rightarrow \cF(M) \otimes_{C^{\infty}(M)} \Omega^1(M)
\]
for each manifold $M$, compatible with pullbacks. The objects of the category $DVS$ are differentiable vector spaces, and the morphisms $Hom_{DVS}(\sF, \sG)$ maps of $C^{\infty}$-modules intertwining the connections. 
\end{dfn}

Any locally convex topological vector space $V$ gives rise to a differentiable vector space as follows. There is a good notion of a smooth map from any manifold $M$ to $V$ introduced by Kriegl and Michor (see \cite{KM}), and we denote by $C^{\infty}(M,V)$ the space of such. The space $C^{\infty}(M,V)$ is naturally a $C^{\infty}(M)$-module, and carries a natural flat connection whose horizontal sections are constant maps $M \rightarrow V$. The assignment $M \rightarrow C^{\infty}(M,V)$  thus produces an object of $\DVS$.  
Multi-linear maps 
\begin{equation} \label{multi_linear}
\cF_1 \times \cF_2 \times \cdots \times \cF_r \mapsto \cG \; \; \cF_i, \cG \in \DVS
\end{equation}
equip $\DVS$ with the structure of a multi-category (or equivalently, a colored operad) by inserting the output of a multilinear map into another. We denote the space of such maps by $\DVS(\cF_1, \cdots, \cF_r \vert \cG)$. 

The multicategory $\DVS$ allows us to formulate the notion of a smooth family of multilinear operations parametrized by an auxiliary manifold $M$. For $\cF \in \DVS$, one first defines the mapping space $\bf{C}^{\infty}(M,\cF) \in \DVS$ as the differentiable vector space given by the assignment $N \mapsto \cF(N \times M)$.  As explained in \cite{CG}, it has a natural flat connection along $N$.

\begin{dfn} \label{smooth_families}
Let $\cF_1, \cdots, \cF_r, \cG \in \DVS$. A smooth family of multilinear operations $\cF_1 \times \cdots \cF_r \mapsto \cG$ parametrized by a manifold $M$ is by definition an element of 
\[
\DVS(\cF_1, \cdots, \cF_r \vert \bf{C}^{\infty}(M,\cG))
\] 
where $\bf{C}^{\infty}(M,\cG)$ is as explained in the preceding paragraph. 
\end{dfn}

$\DVS$ has several good properties. Among these are:

\begin{itemize}
\item $\DVS$ is complete and co-complete. 
\item $\DVS$ is a Grothendieck Abelian Category. 
\end{itemize}

The second property ensures that all standard constructions in homological algebra behave well in $\DVS$. This is in contrast to the category of topological vector spaces, which is not even Abelian. As the authors explain in \cite{CG}, this is because $\DVS$ has  essentially been defined as the category of sheaves on a site. 

Finally, we review some examples of differentiable vector spaces which will be useful to us. 

\begin{eg}
The following is an important example from \cite{CG}.
Suppose $p: W \rightarrow X$ is a vector bundle over the manifold $X$. Then $V=\Gamma(X, W)$ is naturally a Frechet space, and so locally convex. $C^{\infty} (M,V)$ is then identified with $\Gamma(M \times X, \pi^*_X E)$, where $\pi_X : M \times X \rightarrow X$ is the projection on $X$. In particular, taking $W$ to be the trivial bundle, we have $C^{\infty}(M, C^{\infty}(X))= C^{\infty}(M \times X)$.  The same line of reasoning shows that  the space of compactly supported sections of $W$, $V'=\Gamma_c (X, W)$ has a $\DVS$ structure. 
\end{eg}

\begin{eg} \label{direct_image_DVS}
The following generalization of the previous example will be useful in Sections \ref{main_construction_section}, \ref{one_dim_section}. Let $\pi: E \rightarrow X$ be a smooth map, and $p: W \rightarrow E$ a vector bundle on $E$. Denote by $\mc{W}$ the sheaf of smooth sections of $W$ on $E$. Then $V=\Gamma(X, \pi_* \mc{W})$ yields a differentiable vector space, with $C^{\infty}(M, V) = \Gamma(M \times X, \tilde{\pi}_{*} \wt{\mc{W}})$, where $\tilde{\pi} : E \times M \rightarrow X \times M$ is defined by $\tilde{\pi}(e,m) = (\pi(e), m)$, and $\wt{\mc{W}}$ denotes the sheaf of sections of $\pi^*_E W$ on $E \times M$, with $\pi_E : E \times M \rightarrow E$ the projection on the first factor. The connection is determined by the condition that the horizontal sections are those constant in the $M$ direction. When $E=X$ and $\pi = id_X $, this example reduces to the previous one. We may similarly equip $V' = \Gamma_c (X, \pi_* \mc{W})$  with a $\DVS$ structure. 
\end{eg}

\begin{eg} \label{tensor_DVS}
Suppose that $\cF \in \DVS$, and $V$ is any vector space (note that we don't specify a topology on $V$). Then the assignment $M \mapsto \cF(M) \otimes_{\mathbb{R}} V$, with the connection acting trivially on the $V$ factor,  yields an object of $\DVS$ which we denote $\cF_V$, When $V$ is finite-dimensional, this amounts to a finite direct sum of $\cF$. 
\end{eg}

 \subsubsection{Monoidal structures on $\DVS$}
 
 To discuss prefactorization algebras with values in $\DVS$, we must specify a symmetric monoidal structure, which is used in defining the structure maps (\ref{struct_maps}). Certain subtleties arise on this point, typical of the issues one encounters when trying to define tensor products of inifinite-dimensional topological vector spaces. We restrict ourselves to a few brief remarks, and refer the interested reader to \cite{CG} for details. 
 
 \begin{itemize}
 \item Given $\cF, \cG in \DVS$, we can define $\cF \otimes \cG$ as the sheafification of the presheaf $X \mapsto \cF(X) \otimes_{C^{\infty}(X)} \cG(X)$, equipped with the flat connection $\nabla^{\cF} \otimes Id + Id \otimes \nabla^{\cG}$. When $\cF=C^{\infty}(M), \cG=C^{\infty}(N)$, and $X=pt$ is a point, this yields $\cF \otimes \cG (pt) = C^{\infty}(M) \otimes_{\mathbb{R}} C^{\infty}(N)$. We call this symmetric monoidal structure the \emph{naive tensor product} in $\DVS$.
 \item The naive tensor product has certain shortcomings. Most importantly, it does not represent the space of multilinear maps (\ref{multi_linear}). In order to remedy this situation, a certain completed tensor product $\hat{\otimes}_{\beta}$ has to be introduced. This operation is only defined on a certain sub-category of $\DVS$ however. In the last example, we would obtain $\cF \hat{\otimes}_{\beta} \cG (pt) = C^{\infty}(M \times N)$. We will refer to this operation as the \emph{completed tensor product}. 
 \end{itemize}  

Using $ \hat{\otimes}_{\beta} $ rather than $\otimes$ is important if one wishes to obtain a factorization, rather than merely a prefactorization algebra. As we work with prefactorization algebras in this paper, the naive tensor product is adequate, and will be the symmetric monoidal structure on $\DVS$ throughout.

\subsection{Translation-invariant (pre)factorization algebras}

Our construction in Section \ref{main_construction_section}, when applied to the trivial fibration $E=F \times \CC^n \mapsto \CC^n$, produces a prefactorization algebra which is \emph{holomorphically translation-invariant}. This property will be used when extracting a vertex algebra in Section ~\ref{one_dim_section} in the case $n=1$. We proceed to briefly review this notion and refer the interested reader to \cite{CG} for details. 

\subsubsection{Discrete translation-invariance}
Suppose now that $\sF$ is prefactorization algebra on $\CC^n$ in the category of
complex vector spaces. $\CC^n$ acts on itself by translations. For an open subset $U \subset \CC^n$ and $x \in \CC^n$, let 
\[
\tau_x U := \{ y \in \CC^n \vert y-x \in U \}
\]
Clearly, $\tau_x (\tau_y U) = \tau_{x+y} U$. We say that $\sF$ is \emph{discretely translation-invariant} if we are given isomorphisms
\begin{equation} \label{phi}
\phi_x : \sF(U) \rightarrow \sF(\tau_x U)
\end{equation}
for each $x \in \CC^n$ compatible with composition and the structure maps of $\sF$. We refer to section 4.8 of  \cite{CG} for details. 

\begin{eg}
For any Lie algebra $\fg$,  $\U(\fg \otimes \Omega^{0,*}_{\CC^n})$ in Example ~\ref{gDolbeault} is discretely translation-invariant. 
\end{eg}

\subsubsection{Smooth and holomorphic translation-invariance}

A refined version of translation-invariance expresses the fact that the maps $\phi_x$, and hence the structure maps $m^{U_1, \cdots, U_n}_V$ depend smoothly/holomorphically on the positions of the open sets $U_i$. This notion is operadic in flavor. 

For $z \in \CC^n$ and $r > 0$ let $\PD(z,r)$ denote the polydisk
\[
\PD_r(z) = \{ w \in \CC^n \vert \vert w_i - z_i \vert < r,  1 \leq i \leq n \}
\]
and let 
\[
\PD(r_1, \cdots, r_k \vert s ) \subset (\CC^n)^k 
\]
denote the open subset $(z_1, \cdots, z_k) \in (\CC^n)^k$ such that the polydisks $\PD_{r_i}(z_i)$ have disjoint closures and are all contained in $\PD_s (0)$. The collections $\PD(r_1, \cdots, r_k \vert s)$ form an $\mathbb{R}_{> 0}$--colored operad in the category of complex manifolds under insertions of polydisks. 

Suppose that $\sF$ is a discretely translation-invariant prefactorization algebra on $\CC^n$ with values in $\DVS$. We may then identify $\sF(\PD_r (z)) \simeq \sF(\PD_r(z'))$ for any two $z, z' \in \CC^n$ using the isomorphisms (\ref{phi}), and denote the corresponding complex simply by $\sF_r$. For each $p \in \PD(r_1, \cdots, r_k \vert s )$, we have a multilinear map
\begin{equation} \label{mp}
m[p]: \sF_{r_1} \times \cdots \times \sF_{r_k} \mapsto \sF_{s}
\end{equation}

As explained in Section \ref{DVS}, we say that $m[p]$ \emph{depends smoothly on p} if $$m \in \DVS(\sF_{r_1}, \cdots, \sF_{r_k}, \bf{C}^{\infty}(\PD(r_1, \cdots, r_k \vert s), \cF_s)).$$ To formulate the definition of smooth translation-invariance, we will need the notion of a derivation of a prefactorization algebra. 

\begin{dfn}[\cite{CG}]
A degree k derivation of a prefactorization algebra $\sF$ is a collection
of maps $D_ U: \sF (U) \mapsto \sF (U)$ of cohomological degree $k$ for each open subset
$U \subset M$, with the property that for any finite collection $U_1, \cdots, U_n \subset V$ of disjoint
opens and elements $\alpha_i \in \sF(U_i)$, the following version of the Leibniz rule holds
$$
D_V m^{U_1, \cdots, U_n}_V (\alpha_1, \cdots, \alpha_n) = \sum_{i} (-1)^{k( \vert \alpha_1 \vert + \cdots + \vert \alpha_{i-1}) \vert } m^{U_1, \cdots, U_n}_V ( \alpha_1, \cdots, \alpha_{i-1}, D_{U_i} \alpha_i, \cdots, \alpha_n)
$$
\end{dfn}

The derivations of $\sF$ form a DGLA, with bracket $[D, D']_U = [D_U, D'U]$ and differential $d$ given by $dD_U = [d_U, D_U]$, where $d_U$ is the differential on $\sF(U)$. 

The notion of smoothly translation-invariant prefactorization algebra $\cF$ on $\CC^n$ can now be formulated as follows:

\begin{dfn}[\cite{CG}]
A {\em prefactorization algebra} $\cF$ on $\CC^n$ with values in $\DVS$ is (smoothly) translation-invariant if:
\begin{enumerate}
\item $\cF$ is discretely translation-invariant.
\item The maps (\ref{mp}) are smooth as functions of $p  \in \PD(r_1, \cdots, r_k \vert s ) $
\item $\cF$ carries an action of the complex Abelian Lie algebra $\CC^n$ by derivations compatible with differentiating $m[p]$. 
\end{enumerate}

We can further refine the notion of translation-invariance to consider the holomorphic structure. 
We say that $\sF$ is {\it holomorphically translation invariant}
if
\begin{itemize}
\item $\sF$ is smoothly translation invariant. 
\item There exist degree $-1$ derivations $\eta_i: \sF \to
  \sF$ such that 
  \begin{itemize}
   \item $[\dd, \eta_i] = \frac{\partial}{ \partial_{\Bar{z}_i}}$ (as derivations of
  $\sF$)
  \item  $[\eta_i, \eta_j] = [\eta_i, \frac{\partial}{ \partial_{\Bar{z}_j}}] = 0$
  \end{itemize} for $i=1, \cdots, n$, and where $\dd$ is the differential on $\cF$. 
\end{itemize}

This condition means that anti-holomorphic vector fields act homotopically trivially on $\sF$. 
\end{dfn}

As explained in \cite{CG}, if $\sF$ is a holomorphically translation-invariant prefactorization algebra, then upon passing to cohomology, the induced structure maps 
\begin{equation} \label{mphol}
m[p]: H^*(\sF_{r_1}) \times \cdots \times H^*(\sF_{r_k}) \mapsto H^*(\sF_{s})
\end{equation}
are holomorphic as functions of $p \in \PD(r_1, \cdots, r_k \vert s )$. In other words, $m$ can be viewed as a map
\begin{equation} \label{mp_hol}
m: H^*(\sF_{r_1}) \times \cdots \times H^*(\sF_{r_k}) \mapsto \on{Hol}(\PD(r_1, \cdots, r_k \vert s ), H^*(\sF_{s}))
\end{equation}
where $\on{Hol}$ denotes the space of holomorphic maps in  $\DVS$. 

\begin{eg}
For any Lie algebra $\fg$, the holomorphic factorization envelope $\U(\fg \otimes \Omega^{0,*}_{\CC^n})$ of Example \ref{gDolbeault} is holomorphically translation-invariant, with $\eta_i = \frac{d}{d(\dzbar_i)}$
\end{eg}

\section{Factorization algebras from holomorphic fibrations} \label{main_construction_section}

In this section, we describe our main construction of prefactorization algebras from locally trivial holomorphic fibrations. 

Our starting point is the following data:
\begin{itemize}
\item Complex manifolds $F, X$.
\item $(\fg, \langle, \rangle)$ a Lie algebra with an invariant bilinear form.
\item A locally trivial holomorphic fibration $\pi: E \rightarrow X$ with fiber $F$.
\end{itemize}

To this data we will construct a sheaf of $L_{\infty}$ algebras on the total space $E$ of the fibration. 
In turn, we obtain a prefactorization algebra upon taking its factorization envelope. 

\subsection{A sheaf of Lie algebras} \label{sec: cocycle}

Let $\Omega^{0,*}_E$ be the sheaf of dg vector spaces given by the Dolbeault complex of the complex manifold $E$, equipped with the $\delbar$ operator. 
For any Lie algebra $\fg$, we can define the sheaf of dg Lie algebras
\[
\fg_E = \fg \otimes \Omega^{0,*}_E
\]
which on an open set $U \subset E$ assigns $\fg \otimes \Omega^{0,*}(U)$. 
The differential is again given by the $\delbar$ operator, and the Lie bracket is defined by
\[
[X \otimes \alpha, Y \otimes \beta] = [X,Y] \otimes \alpha \wedge \beta 
\]
where $X, Y \in \fg$, $\alpha, \beta \in \Omega^{0,*}(U)$. 

Let $K_{E}$ be the double complex of sheaves
\[
\CC [2] \rightarrow \Omega^{0,*}_E [1] \overset{\partial}{\rightarrow } \Omega^{1,*}_E 
\]
where the first arrow is an inclusion, and the second is given by the holomorphic de Rham operator $\partial$. 
The complex $K_E$ is viewed as a double complex with $\delbar$ acting as vertical differential on $\Omega^{p,*}_E$, $p=0,1$. 
Let $\cK_{E} = \on{Tot}(K_{E})$ be the totalization of this double complex. 

We consider the complex of sheaves $\cK_E$ as a trivial dg module for the sheaf of dg Lie algebras $\fg_E$. 
We will construct a cocycle on $\fg_E$ with values in this trivial dg module. 
To this end, define the following maps of sheaves
\begin{align*}
\phi^{(1)} & : (\fg_{E})^{\otimes 2} \rightarrow  \Omega^{1,*}_E \\
\phi^{(1)} ((X \otimes \alpha) \otimes (Y \otimes \beta)) & = \frac{1}{2} \langle X, Y \rangle  (\alpha \wedge \del \beta - (-1)^{| \alpha |} \del \alpha \wedge \beta),
\end{align*}
and
$$ \phi^{(0)}: (\fg_{E})^{\otimes 3} \rightarrow \Omega^{0,*}_E [1] $$
$$ \phi^{(0)}( (X \otimes \alpha) \otimes(Y \otimes \beta) \otimes (Z \otimes \gamma)) = \frac{1}{2} \langle [X,Y], Z \rangle (\alpha \wedge \beta \wedge \gamma) $$
We view the sum $\phi = \phi^{(0)} + \phi^{(1)}$ as a cochain in the Chevalley-Eilenberg complex $$ \phi \in \CEcoh(\fg_{E}, \cK_{E}) $$ of total degree $2$. 
A short calculation shows the following:

\begin{lemma}\label{lem: cocycle}
$\phi$ defines a cocycle in $\mc C_{Lie}^*(\fg_{\pi}, \cK_{E}) $ of total degree $2$.
\end{lemma}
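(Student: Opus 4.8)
The plan is to follow the template of the algebraic cocycle lemma above, but now carrying along the extra Dolbeault differential and the totalization of the double complex $K_E$. First I would unwind the total differential on $\mc{C}^{*}_{Lie}(\fg_\pi,\cK_E)$ into three pieces, each of total degree $+1$,
\[
D = \d_{CE} + \delbar + \partial ,
\]
where $\d_{CE}$ is the Chevalley--Eilenberg differential dual to the bracket on $\fg_E$ (it raises the tensor arity of a cochain by one and fixes the term of $\cK_E$ in which the cochain takes values), $\delbar$ is the internal Dolbeault differential coming both from the differential on $\fg_E$ and from the vertical differential of the double complex $K_E$ (it fixes both the arity and the column), and $\partial$ is the horizontal differential of $\cK_E$, namely the inclusion $\CC \into \Omega^{0,*}_E$ together with the holomorphic de Rham operator $\Omega^{0,*}_E \to \Omega^{1,*}_E$ (it fixes the arity and raises the column by one). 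Since $\cK_E$ is a trivial module, the module-action terms in $\d_{CE}$ drop out.

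Next I would split the equation $D\phi = 0$ into its components, indexed by the arity of the cochain and by which of the three terms $\CC[2]$, $\Omega^{0,*}_E[1]$, $\Omega^{1,*}_E$ of $\cK_E$ it lands in. Since $\phi^{(1)}$ is a $2$-ary cochain valued in $\Omega^{1,*}_E$ and $\phi^{(0)}$ is a $3$-ary cochain valued in $\Omega^{0,*}_E[1]$, since $\partial$ vanishes on $\Omega^{1,*}_E$ (the top of the horizontal complex), and since nothing maps into the $\CC[2]$ summand, the cocycle equation collapses to exactly four identities:
\begin{align*}
\delbar \phi^{(1)} &= 0, \\
\d_{CE}\phi^{(1)} + \partial \phi^{(0)} &= 0, \\
\delbar \phi^{(0)} &= 0, \\
\d_{CE}\phi^{(0)} &= 0 .
\end{align*}

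I would then dispatch these one at a time. The identity $\d_{CE}\phi^{(0)} = 0$ is purely Lie-theoretic: it is the statement that $(X,Y,Z)\mapsto\langle [X,Y],Z\rangle$ is a Lie algebra $3$-cocycle, which follows from the Jacobi identity and invariance of $\langle\cdot,\cdot\rangle$ (the form factor $\alpha\wedge\beta\wedge\gamma$ rides along symmetrically and contributes no obstruction), exactly as in the algebraic lemma. The identity $\delbar\phi^{(0)} = 0$ holds because $\phi^{(0)}$ is, up to the scalar $\frac{1}{2}\langle[X,Y],Z\rangle$, the triple wedge product, and $\delbar$ is a graded derivation of $\wedge$; thus $\phi^{(0)}$ intertwines $\delbar$ on source and target. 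The identity $\delbar\phi^{(1)} = 0$ is the one place where $\partial$ and $\delbar$ genuinely interact: expanding by the graded Leibniz rule and using $\partial\delbar + \delbar\partial = 0$ produces a cancellation, once the totalization signs are tracked. Finally the mixed identity $\d_{CE}\phi^{(1)} + \partial\phi^{(0)} = 0$ is the geometric counterpart of the algebraic relation $\partial\phi^{(0)} + \d_{CE}\phi^{(1)} = 0$ and is verified by direct computation: $\partial\phi^{(0)}$ yields $\frac{1}{2}\langle[X,Y],Z\rangle\,\partial(\alpha\wedge\beta\wedge\gamma)$, while $\d_{CE}\phi^{(1)}$, after using invariance to cyclically rewrite $\langle[X,Y],Z\rangle$, yields the same expression with the opposite sign.

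I expect the main obstacle to be bookkeeping rather than conceptual: fixing the Koszul signs so that the three differentials are mutually compatible, so that $\delbar\phi^{(1)} = 0$ and the mixed identity hold on the nose rather than merely up to a sign. Concretely, the relative sign between the vertical differential $\delbar$ and the horizontal differential $\partial$ in $\on{Tot}(K_E)$, and its interaction with the Chevalley--Eilenberg sign convention, must be pinned down at the outset; once it is, each of the four identities is a short graded calculation. Since $\fg_\pi = \pi_* \fg_E$ and $\phi$ is defined fibrewise, the identical computation establishes the claim as stated for $\mc{C}^{*}_{Lie}(\fg_\pi,\cK_E)$.
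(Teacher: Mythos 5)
Your proposal is correct and matches the paper's approach: the paper leaves this as "a short calculation" precisely because it is the Dolbeault-enhanced version of the earlier algebraic lemma, whose proof is the same component-wise decomposition $\d\phi^{(1)}=0$, $\d_{CE}\phi^{(0)}=0$, $\d\phi^{(0)}+\d_{CE}\phi^{(1)}=0$, with the extra $\delbar$-components handled exactly as you do (via the derivation property of $\delbar$ and $\partial\delbar+\delbar\partial=0$). Your four-identity bookkeeping is the intended argument, just written out in full.
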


With the cocycle $\phi$ in hand, we can make the following definition of a complex computing the ``twisted Lie algebra homology" of $\fg_E$.
First, let
\[
\fghat_E := \fg_E \oplus \cK_{E} 
\] 
be the sheaf of dg vector spaces obtained by taking the direct sum of $\fg_E$ and $\cK_E$. 

\begin{dfn}
Define the sheaf of dg vector spaces
\[
\Ch(\fghat_{E}) := \left(\on{Sym}(\fghat_{E}[1]), d + d_{CE} + \phi\right)
\]
where 
\begin{itemize}
\item $d = \delbar + d_{\cK_{E}}$ is the differential on $\fghat_{E}$ (with the summands acting on $\fg_{E}, \cK_{E}$ respectively)
\item $d_{CE}$ is the Chevalley-Eilenberg differential of $\fg_{E}$
\item the linear map $\phi$ is extended to $\on{Sym}(\fghat_{E}[1])$ as a co-derivation. 
\end{itemize}
The fact that $( d + d_{CE} + \phi)^2 = 0$ follows from Lemma \ref{lem: cocycle}.
\end{dfn}

\begin{rmk}
We use the notation $\Ch(\fghat_{E})$ since this complex is the Chevalley-Eilenberg complex of a sheaf of $L_\infty$ algebras.
Indeed, one can think of $\phi$ as defining the structure of a sheaf of $L_\infty$-algebras on $\fghat_E$: the $\ell_1$ bracket equal to the sum of the differentials, the $\ell_2$ bracket is
\[
\ell_2(X \tensor \alpha, Y \tensor \beta) = [X \tensor \alpha, Y \tensor \beta] + \phi^{(1)} (X \tensor \alpha, Y \tensor \beta) 
\]
and the $\ell_3$ bracket
\[
\ell_3(X \tensor \alpha, Y \tensor \beta, Z \tensor \gamma) = \phi^{(0)} (X \tensor \alpha, Y \tensor \beta, Z \tensor \gamma) .
\]
All other brackets are zero.
\end{rmk}

\subsubsection{Relation to local cocycles}

There is a relationship to our construction and the theory ``twisted factorization envelopes" given in Section 4.4 of \cite{CG2} when the complex dimension $\dim_{\CC}(E) = 1$. 
There, the data one uses to twist is that of a local cocycle which lives in the local cohomology of a sheaf of Lie algebras. 
We don't recall the precise definition, but if $\cL$ is a sheaf of Lie algebras obtained from a bundle $L$, the local cohomology $\mc C_{loc}^*(\cL)$ is a sheaf of dg vector spaces which is witnessed as a subsheaf
\[
\mc C_{loc}^*(\cL) \subset \mc C_{Lie}^*(\cL_c)
\]
where $\cL_c$ denotes the cosheaf of compactly supported sections. 
The condition for a cochain in $\mc C_{Lie}^*(\cL_c)$ to be local is that it is given by integrating a ``Lagrangian density". 
Such a Lagrangian density is a differential form valued cochain which only depends on the $\infty$-jet of the sections of $L$, that is, it is given by a product of polydifferential operators. 

We have defined the cocycle $\phi$ as an element in $\mc C^*_{Lie}(\fg_E, \cK_E)$. 
The complex $\mc C^*_{Lie}(\fg_E, \cK_E)$ is neither a sheaf or a cosheaf, however, the object 
\[
\mc C^*_{Lie}(\fg_{E, c}, \cK_E)
\]
is a sheaf. 
Here, we restrict to cochains defined on compactly supported sections of $\fg_E$. 
The cocycle $\phi$ is a section of this sheaf, meaning it is compatible with the natural restriction maps. 

The cocycle $\phi$ is not just any section of this sheaf.
For any open $U \subset E$, it actually lies in the subcomplex
\[
\phi(U) \in \mc C^*_{Lie}(\fg_{E, c} (U), \cK_{E,c}(U)) \subset \mc C^*_{Lie}(\fg_{\pi, c}, \cK_E)(U) .
\]
In other words, $\phi$ preserves the condition of being compactly supported. 

Now, the cosheaf $\cK_{E,c}$ admits a natural integration map
\[
\int : \cK_{E,c} \to \underline{\C} [-1]
\]
where $\underline{\C} [-1]$ is the constant cosheaf concentrated in degree $+1$.  
(Integration is only nonzero on the $\Omega^{1,1}_c$, which accounts for the shift above)
Thus, for every $U \subset E$, we obtain a cocycle
\[
\phi(U) \in \mc C^*_{Lie}(\fg_{E, c} (U)) .
\]
The cocycle $\phi$ is clearly built from differential operators, which implies that $\phi(U)$ is actually a element of the local cochain complex
\[
\phi(U) \in \mc C_{loc}^*(\fg_E) (U) .
\]
In conclusion, upon integration, we see that $\phi$ determines a degree one cocycle in the local cohomology of the sheaf of Lie algebras $\fg_E$. 

In higher dimensions, there is a similar relationship to local functionals which hence determine one dimensional central extensions of Kac-Moody type algebras in any dimension. 
This class of cocycles is studied in detail in the context of ``higher dimensional" Kac-Moody algebras in \cite{GWkm}. 

\subsection{The prefactorization algebra $\sF_{\fg,\pi}$}

We proceed to construct a prefactorization algebra on $X$ --- the base of the fibration $\pi: E \rightarrow X$. For an open subset $U \subset X$, define the dg vector space
\begin{equation} \label{fghat_def}
\fghat^c_{\pi}(U) = \Gamma_c (U,  \pi_* (\fghat_{E}))
\end{equation}
In other words, $\fghat^c_{\pi}$ is the cosheaf of compactly supported sections of $\pi_*( \fghat_{E})$. 

\begin{rmk}
Though the assignment $U \mapsto \fghat^c_{\pi} (U)$ is a cosheaf of dg vector spaces, it is just a {\em pre}cosheaf of $L_{\infty}$ algebras on $X$ (with $L_\infty$ structure defined by the cocycle $\phi$ in the previous subsection). 
This subtle issue arises since direct sum is not the categorical coproduct in the category of Lie algebras, but it will play no essential role for us. 
\end{rmk}

\begin{dfn}
Define the precosheaf
\begin{equation} \label{FEX}
\sF_{\fg, \pi} := \Ch ( \fghat^c_{\pi} )
\end{equation}
as the Chevalley-Eilenberg complex for Lie algebra homology for the cosheaf $\fghat^c_\pi$.
For each open $U \subset X$ this cosheaf assigns the complex
\begin{equation} 
\sF_{\fg, \pi}(U) = \Ch (\fghat^c_{\pi}(U))
\end{equation}
\end{dfn}

\begin{rmk}
The cosheaf $\fghat^c_{\pi}$ is equipped with a $\DVS$ structure as in Example \ref{direct_image_DVS}, and therefore so is $\sF_{\fg, \pi}$, being constructed from algebraic tensor product.
\end{rmk}

\begin{prop}\label{prop: f}
$\sF_{\fg, \pi}$ has the structure of a pre-factorization algebra on $X$ valued in $dg-\DVS$. When $X= \CC^n$,  $\sF_{\fg, \pi}$ is holomorphically translation-invariant. 
\end{prop}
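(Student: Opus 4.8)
The plan is to treat the two assertions separately: the prefactorization structure is a direct instance of the factorization envelope construction of Section~\ref{sec: prefact1}, while holomorphic translation-invariance requires producing an explicit contracting homotopy for the antiholomorphic translations and checking its compatibility with the twist $\phi$.

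First I would establish the prefactorization algebra structure. By the remark preceding the definition of $\sF_{\fg,\pi}$, the assignment $U \mapsto \fghat^c_\pi(U) = \Gamma_c(U, \pi_*\fghat_E)$ is a precosheaf of $L_\infty$ algebras on $X$: for pairwise disjoint opens $U_1, \dots, U_k \subset V$, extension by zero furnishes a map $\bigoplus_{i=1}^k \fghat^c_\pi(U_i) \to \fghat^c_\pi(V)$. Applying the Chevalley chains functor $\Ch$, which sends direct sums to tensor products, yields the structure maps $\bigotimes_{i=1}^k \sF_{\fg,\pi}(U_i) \to \sF_{\fg,\pi}(V)$. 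Associativity and $S_k$-equivariance follow from the corresponding properties of extension by zero and the functoriality of $\Ch$, exactly as in the general factorization envelope theorem applied to the fine cosheaf $\fghat^c_\pi$. For the enrichment, $\fghat^c_\pi$ carries a differentiable structure by Example~\ref{direct_image_DVS} (applied to the bundle underlying $\fghat_E$ on $E$ together with $\pi$), and this is inherited by $\sF_{\fg,\pi} = (\Sym(\fghat^c_\pi[1]), D)$ through the algebraic symmetric powers, where $D = d + d_{CE} + \phi$; the structure maps are then morphisms in the multicategory $\DVS$. This exhibits $\sF_{\fg,\pi}$ as a prefactorization algebra valued in $dg$-$\DVS$.

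Next I would treat discrete and smooth translation-invariance, taking $E = F \times \CC^n$ so that the translation action of $\CC^n$ on the base lifts to $E$ as $\mathrm{id}_F \times \tau_x$ (this is the situation of interest, cf. the introduction). This lift induces isomorphisms of $\pi_*\Omega^{0,*}_E$ and of $\cK_E$, hence of $\fghat_E$, under which the cocycle $\phi$ is preserved, since $\phi^{(0)}$ and $\phi^{(1)}$ are built solely from $\wedge$, the holomorphic de Rham operator $\partial$, the bracket of $\fg$, and $\langle\,,\rangle$, all translation-invariant. Passing to compactly supported sections and $\Ch$ gives isomorphisms $\phi_x : \sF_{\fg,\pi}(U) \to \sF_{\fg,\pi}(\tau_x U)$ compatible with the structure maps, which is discrete translation-invariance. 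Smoothness of $m[p]$ in $p \in \PD(r_1,\dots,r_k \vert s)$ is encoded by the $\DVS$ structure from Example~\ref{direct_image_DVS} together with the smooth dependence of extension by zero on the positions of the polydisks, and the abelian Lie algebra $\CC^n$ acts by the derivations induced by the holomorphic translation generators $\partial/\partial z_i$, compatibly with differentiating $m[p]$.

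The main work, and the main obstacle, is holomorphic translation-invariance. Following the example given after the definition of holomorphic translation-invariance, I would set $\eta_i := \iota_{\partial/\partial\zbar_i}$ on $\fghat_E$ --- contraction, i.e. removal of a $d\zbar_i$ in the Dolbeault directions --- and extend each $\eta_i$ as a coderivation $H_i$ of $\sF_{\fg,\pi} = (\Sym(\fghat^c_\pi[1]), D)$, a degree $-1$ derivation of the prefactorization algebra. The identities $[\eta_i,\eta_j]=0$ and $[\eta_i, \partial/\partial\zbar_j]=0$ are immediate, as the contractions anticommute among themselves and commute with coefficient differentiation. The crucial identity is $[D, H_i] = \partial/\partial\zbar_i$ (the latter extended as a coderivation). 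Since both sides are coderivations of the cofree coalgebra $\Sym(\fghat^c_\pi[1])$, it suffices to compare corestrictions onto $\fghat^c_\pi[1]$, reducing the claim to three bracket checks: (i) $[\ell_1, \eta_i] = \partial/\partial\zbar_i$, which is the Cartan identity $\delbar\,\iota_{\partial/\partial\zbar_i} + \iota_{\partial/\partial\zbar_i}\,\delbar = \partial/\partial\zbar_i$ on the Dolbeault complex together with $[d_{\cK_E}, \eta_i]=0$ (the operator $\partial$ in $d_{\cK_E}$ anticommutes with $\eta_i$, and the remaining pieces involve no Dolbeault form directions); (ii) $\eta_i$ is a graded derivation of $\ell_2 = [\,\cdot\,,\cdot\,]_{\fg_E} + \phi^{(1)}$; and (iii) $\eta_i$ is a graded derivation of $\ell_3 = \phi^{(0)}$. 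Checks (ii) and (iii) are where the central twist could in principle obstruct the homotopy; they hold because $\iota_{\partial/\partial\zbar_i}$ is a graded derivation of $\wedge$ and anticommutes with the holomorphic operator $\partial$ appearing in $\phi^{(1)}$, while the Lie-algebraic factors $[\,\cdot\,,\cdot\,]_\fg$ and $\langle\,,\rangle$ are untouched by contraction. I expect these last verifications to be routine but genuinely sign-sensitive, and they constitute the heart of the argument; granting them, $\eta_i$ trivializes the antiholomorphic translations homotopically and $\sF_{\fg,\pi}$ is holomorphically translation-invariant.
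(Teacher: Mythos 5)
Your proposal is correct and follows essentially the same route as the paper's (very terse) proof: the prefactorization structure comes from the factorization envelope applied to the fine cosheaf $\fghat^c_\pi$, smooth translation-invariance is established as in the corresponding example of \cite{CG}, and holomorphic translation-invariance is witnessed by $\eta_i = \frac{d}{d(d\zbar_i)}$, i.e.\ contraction removing a $d\zbar_i$. You have merely spelled out the details the paper leaves implicit, in particular the compatibility of $\eta_i$ with the twisting cocycle $\phi$, which is indeed the only check not already covered by the untwisted example.
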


\begin{proof}
Note that if $\pi: E \to X $ is a locally trivial fibration and $W$ is a smooth vector bundle on $E$, then $\pi_* \wt{W}$ is a fine sheaf. 
The smooth translation-invariance of $\sF_{\fg, \pi}$ is established just as in the example of the free scalar field in Section 4.8 of \cite{CG}. Finally, $\eta_i= \frac{d}{d(d\zbar_i))}$ are degree $(-1)$ derivations satisfying the conditions in the definition of holomorphic translation-invariance. 
\end{proof}

\subsection{The prefactorization algebra $\sG_{\fg,\pi}$ }

In this section we discuss some prefactorization algebras closely related to $\sF_{\fg,\pi}$, which are both more convenient from a computational standpoint and more closely related to the class of toroidal algebras we have introduced previously. 
While the definition of $\sF_{\fg,\pi}$ is reasonably simple, explicit calculations of $H^*(\sF_{\fg,\pi} (U))$ for an open subset $U \subset X$ require the $\delbar$-cohomology of the complex $ \Gamma_c (U,  \pi_* (\fghat_{E}))$ in (\ref{fghat_def}). This complex involves forms with compact support along the base $X$ and arbitrary support along the fiber $F$, and its $\delbar$-cohomology even when $E$ is a trivial fibration is a certain completion of $H^{0,*}_c (U) \otimes H^{0,*} (F) $ whose explicit description involves non-trivial analytic issues, due to the failure of naive Kunneth-type theorems for Dolbeault cohomology. 


Let us first suppose that $E = X \times F$ is a trivial fibration. We have a map of cosheaves
\[
\Omega^{p,q}_{X,c} \otimes \Gamma(F, \Omega^{{p', q'}}_F) \rightarrow (\pi_* \Omega^{p+p', q+q'}_E)_c
\]
where the subscript $c$ denotes sections with compact support. Explicitly, for an open subset $U \subset X$, this map is just the wedge product
\begin{align*}
\Omega^{p,q}_{X,c}(U) \otimes \Gamma(F, \Omega^{p',q'}_F) & \rightarrow \Gamma_c(U,  \pi_* \Omega^{p+p', q+q'}_E) \\
\alpha \otimes \beta &\rightarrow \alpha \wedge \beta
\end{align*}
It is injective provided all three factors are non-zero. 

\begin{dfn}
Define a sub cosheaf $\ffg$ of $\fghat^{c}_{\pi}$ by
\[
\ffg := \fg \otimes \Omega^{0,*}_{X,c} \otimes \Gamma(F, \Omega^{0,*}_F) \oplus \mc{K}^{\#}_{\pi}
\]
where
\[
\mc{K}^{\#}_{\pi} := \on{Tot} (  \Omega^{0,*}_{X,c} \otimes \Gamma(F, \Omega^{0,*}_F) \overset{\partial}{\rightarrow} \Omega^{1,*}_{X,c} \otimes  \Gamma(F, \Omega^{0,*}_F) \oplus \Omega^{0,*}_{X, c} \otimes \Gamma(F, \Omega^{1,0}_F)) .
\]
Here, $\delbar$ acts "vertically" within each term. 
\end{dfn}

\begin{rmk}
The cosheaf $\ffg$ may be equipped with a $dg-\DVS$ structure as in Example \ref{tensor_DVS}. 
\end{rmk}

The $L_{\infty}$ structure on $\fghat^c_{\pi}$ induces one on the sub-complex $\ffg$. 
The thing to observe is that the cocycle $\phi$ restricts to one on this subcomplex.
The advantage of $\ffg$ lies in the fact that it's constructed from ordinary (algebraic) tensor products of complexes whose cohomology is easy to describe. 

\begin{dfn}
Define the precosheaf $\sG_{\fg, \pi} $ on $X$ by
\[
\sG_{\fg, \pi} :=  \Ch ( \ffg )
\]
The same argument for $\sF_{\fg,\pi}$ in Proposition \ref{prop: f} implies that $\sG_{\fg, \pi}$ has the structure of a prefactorization algebra on $X$. 
\end{dfn}


The arguments of the previous section show:

\begin{prop}
$\sG_{\fg, \pi}$ has the structure of a pre-factorization algebra on $X$. When $X= \CC^n, n\geq 1$,  $\sG_{\fg, \pi}$ is holomorphically translation-invariant. 
\end{prop}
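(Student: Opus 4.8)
The plan is to establish the two claims of the Proposition---the prefactorization-algebra structure and holomorphic translation-invariance---by reducing each to the corresponding assertion for $\sF_{\fg,\pi}$ that was already proved in Proposition \ref{prop: f}, exploiting the fact that $\ffg$ is a sub-object of $\fghat^c_\pi$ on which all the relevant structure restricts. First I would observe that the prefactorization structure on $\sG_{\fg,\pi}$ comes for free from the factorization-envelope formalism of Section \ref{sec: prefact1}: since $\ffg$ is a precosheaf of $L_\infty$ algebras (with the $L_\infty$ operations $\ell_1,\ell_2,\ell_3$ obtained by restricting the differential, bracket, and cocycle $\phi$ from $\fghat^c_\pi$, as noted just before the Definition of $\sG_{\fg,\pi}$), applying the Chevalley chains functor $\Ch$ to the extension-by-zero maps $\oplus_i \ffg(U_i) \to \ffg(V)$ produces exactly the structure maps of a prefactorization algebra, precisely as in \eqref{envelope}. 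The one point requiring care is that these extension-by-zero maps are defined on $\ffg$, i.e. that the inclusion $\ffg \hookrightarrow \fghat^c_\pi$ is compatible with the cosheaf structure maps; this follows from the explicit description of $\ffg$ as built from $\Omega^{0,*}_{X,c}\otimes\Gamma(F,\Omega^{0,*}_F)$, since extension by zero acts only on the compactly-supported base factor.

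Next I would verify discrete translation-invariance, which is the first clause in the definition of holomorphic translation-invariance. For $X=\CC^n$ the translation action $\tau_x$ acts on the base coordinates, hence on the factor $\Omega^{0,*}_{X,c}$, while leaving the fiber factor $\Gamma(F,\Omega^{0,*}_F)$ untouched; since $\ffg$ is manifestly a tensor product of these two pieces (together with the analogous terms in $\mc{K}^\#_\pi$), the isomorphisms $\phi_x$ of \eqref{phi} for $\sF_{\fg,\pi}$ restrict to isomorphisms on $\sG_{\fg,\pi}$. Smoothness of the structure maps $m[p]$ as functions of $p \in \PD(r_1,\cdots,r_k\vert s)$ is inherited in the same way: the map $\sG_{\fg,\pi}\hookrightarrow \sF_{\fg,\pi}$ is a map of prefactorization algebras in $\DVS$, and smoothness is a property of the $\DVS$-valued multilinear maps that restricts to sub-objects. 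This is exactly the content of the phrase ``the same argument as for $\sF_{\fg,\pi}$,'' and I would simply point to Proposition \ref{prop: f} rather than repeat the free-scalar-field computation of Section 4.8 of \cite{CG}.

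Finally, for the holomorphic refinement I would exhibit the degree $-1$ derivations $\eta_i$. The natural candidates are the contraction operators $\eta_i = \frac{d}{d(\dzbar_i)}$ with the base anti-holomorphic one-forms, exactly as in the holomorphically translation-invariant structure on $\sF_{\fg,\pi}$ and on the Dolbeault factorization envelope of Example \ref{gDolbeault}. Because these derivations act only on the base Dolbeault factor $\Omega^{0,*}_{X,c}$, and $\ffg$ is closed under contraction with $\dzbar_i$ (each summand carries its full base $(0,*)$-form content), the $\eta_i$ restrict to derivations of $\sG_{\fg,\pi}$; the identities $[\dd,\eta_i]=\partial/\partial\zbar_i$ and $[\eta_i,\eta_j]=[\eta_i,\partial/\partial\zbar_j]=0$ then hold on the subcomplex because they already hold on $\sF_{\fg,\pi}$.

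The main obstacle is a bookkeeping subtlety rather than a deep analytic one: I must check that the passage to the subcomplex $\ffg$ is genuinely compatible with \emph{all} the structure involved---the $\delbar$-differential, the holomorphic de Rham operator $\partial$ defining $\mc{K}^\#_\pi$, the $L_\infty$ brackets including the cocycle $\phi$, the extension-by-zero cosheaf maps, and the derivations $\eta_i$---so that everything descends to a \emph{well-defined} restriction. Concretely, one must confirm that $\phi^{(0)}$ and $\phi^{(1)}$ send $(\ffg)^{\otimes k}$ into $\mc{K}^\#_\pi$ (and not merely into the larger $\cK_\pi$), which is where the factorized form $\Omega^{*,*}_{X,c}\otimes\Gamma(F,\Omega^{*,*}_F)$ is doing real work, since $\partial$ on $E=X\times F$ decomposes as $\partial_X + \partial_F$ and the two-term target of $\mc{K}^\#_\pi$ is precisely engineered to absorb both pieces. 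Once this compatibility is confirmed, every clause of both definitions follows formally by restriction from the already-established properties of $\sF_{\fg,\pi}$, so the proof is short.
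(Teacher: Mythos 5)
Your proposal is correct and follows essentially the same route as the paper, which simply invokes the argument already given for $\sF_{\fg,\pi}$ in Proposition \ref{prop: f} after noting that the cocycle $\phi$ (and hence the $L_\infty$ structure) restricts to the subcomplex $\ffg$. Your additional checks---that extension by zero acts only on the compactly supported base factor, and that the contractions $\eta_i$ preserve $\ffg$---are exactly the bookkeeping the paper leaves implicit.
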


When $X = \CC^n$, there is a natural map $\sG_{\fg, \pi} \to \sF_{\fg, \pi}$. 

\begin{lemma}
Suppose $X = \CC^n$ and $\pi$ is a trivial holomorphic fibration. 
Then, there exists a map of factorization algebras on $\CC^n$:
\[
\sG_{\fg, \pi} \to \sF_{\fg, \pi}
\]
\end{lemma}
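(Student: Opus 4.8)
The statement asserts the existence of a map of prefactorization algebras $\sG_{\fg,\pi} \to \sF_{\fg,\pi}$ when $X = \CC^n$ and $\pi$ is trivial. Both sides are factorization envelopes, namely $\sG_{\fg,\pi} = \Ch(\ffg)$ and $\sF_{\fg,\pi} = \Ch(\fghat^c_\pi)$, so by functoriality of the Chevalley-Eilenberg chains construction it suffices to exhibit a morphism of the underlying precosheaves of $L_\infty$ algebras $\ffg \to \fghat^c_\pi$ and then apply $\Ch(-)$. The plan is therefore to (i) produce the map of cosheaves, (ii) check it is a map of $L_\infty$ algebras, and (iii) invoke functoriality of $\Ch$ together with the fact that both targets are prefactorization algebras.

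\textbf{The map of cosheaves.} The subcosheaf $\ffg$ was defined precisely as a sub-object of $\fghat^c_\pi$, so the desired morphism is simply the inclusion $\iota : \ffg \hookrightarrow \fghat^c_\pi$. Concretely, on the Lie-algebra summand the inclusion is induced by the wedge-product map
\[
\fg \otimes \Omega^{0,*}_{X,c}(U) \otimes \Gamma(F,\Omega^{0,*}_F) \to \fg \otimes \Gamma_c(U, \pi_*\Omega^{0,*}_E),
\qquad X \otimes \alpha \otimes \beta \mapsto X \otimes (\alpha \wedge \beta),
\]
which (as noted just before the definition of $\ffg$) is a well-defined injection of cosheaves since $E = X \times F$ is trivial; on the central summand $\mc{K}^{\#}_\pi$ one uses the analogous wedge inclusion into $\cK_{E}$. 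I would first verify that $\iota$ is a chain map, i.e. that it intertwines the differentials $\delbar + d_{\cK}$ on the two sides. This is immediate because $\delbar$ acts as a derivation for the wedge product and because the holomorphic de Rham operator $\partial$ defining the internal differential of $\mc{K}^{\#}_\pi$ is compatible with $\partial$ on $\cK_E$.

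\textbf{Compatibility with the $L_\infty$ structure.} The essential content is that $\iota$ respects the brackets $\ell_2 = [-,-]_{\fg_E} + \phi^{(1)}$ and $\ell_3 = \phi^{(0)}$. For $\ell_2$ the Lie-bracket part is intertwined because $\iota$ is an algebra map for the wedge product, and the cocycle part is intertwined because, as already observed in the construction, the cocycle $\phi$ on $\fghat^c_\pi$ restricts to the cocycle defining the $L_\infty$ structure on $\ffg$; the same restriction statement handles $\ell_3 = \phi^{(0)}$. In other words, the only thing to check is that $\phi$ applied to elements in the image of $\iota$ equals the formula for $\phi$ on $\ffg$, which holds by construction since $\ffg$ was built so that $\phi$ restricts to it. I expect \emph{this restriction compatibility to be the main obstacle}, or rather the main point requiring care: one must confirm that the wedge-product inclusion is genuinely compatible with all the higher brackets simultaneously and, crucially, that $\iota$ is compatible with the factorization structure maps (extension by zero on the cosheaf level), so that applying $\Ch$ yields a morphism of prefactorization algebras rather than merely a pointwise map of complexes.

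\textbf{Conclusion.} Having established that $\iota : \ffg \to \fghat^c_\pi$ is a morphism of precosheaves of $L_\infty$ algebras compatible with the extension-by-zero structure maps, I would apply the Chevalley-Eilenberg chains functor $\Ch$, which sends morphisms of ($L_\infty$) cosheaves to morphisms of the associated factorization envelopes and sends direct sums to tensor products. This produces maps $\Ch(\ffg(U)) \to \Ch(\fghat^c_\pi(U))$ for every open $U$ that commute with the factorization structure maps, i.e. exactly a morphism of prefactorization algebras $\sG_{\fg,\pi} \to \sF_{\fg,\pi}$, completing the proof.
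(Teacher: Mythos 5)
Your proposal is correct and matches the paper's argument: the paper likewise observes that $\ffg$ is a sub-object of $\fghat^{c}_{\pi}$ on which the $L_\infty$ structure (in particular the cocycle $\phi$) restricts, and obtains the map of prefactorization algebras by applying the Chevalley--Eilenberg chains functor to this inclusion of cosheaves of $L_\infty$ algebras. Your write-up simply spells out the verifications (chain map, bracket compatibility, compatibility with extension by zero) that the paper leaves implicit.
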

\begin{proof}
We have just constructed a map of DGLA's $\ffg \to \fghat^{c}_{\pi}$, which induces a map  $ \sG_{\fg, \pi} \to \sF_{\fg, \pi} $ upon taking the factorization enveloping algebra.
\end{proof}

For a general locally trivial holomorphic fibration $\pi : E \to X$, we can construct a map of factorization algebras $\sG_{\fg, \pi} \to \sF_{\fg, \pi}$ locally on $X$. 
Indeed, for any $x \in X$ there is a local coordinate $(z_1,\ldots, z_d)$ near $x$ trivializing the fibration. 
This map of factorization algebras depends on both the choice of local coordinate and trivialization.  

\subsubsection{An algebraic variant} \label{sec: algvar}

When the fiber $F$ is a smooth complex affine variety and $X=\CC^n$, we may further refine $\sG_{\fg,\pi}$ to obtain a prefactorization algebra $\sG^{alg}_{\fg,\pi}$ with stronger finiteness properties, by considering the algebraic rather than analytic cohomology of $\mc{O}_F$. This variation will be important in the next section, when we make contact with vertex algebras. Let $\mc{O}^{alg}_F$ denote the sheaf of algebraic regular functions on $F$, and $\Omega^{1, alg}$ the sheaf of Kahler differentials. Since $F$ is affine, hence Stein, we have
\begin{align*}
H^{0}(F, \mc{O}^{alg}_F) \subset H^{0}(F, \mc{O}_F) & \hookrightarrow (\Omega^{0,*}_F, \delbar) \\
H^{0}(F, \Omega^{1, alg}_F ) \subset H^{0}(F, \Omega^{1}_F) & \hookrightarrow (\Omega^{1,*}_F, \delbar )
\end{align*}

\begin{dfn}
We define 
\[
\ffga := \fg  \otimes \Omega^{0,*}_{X,c} \otimes  H^0(F, \mc{O}^{alg}_F)  \oplus \mc{K}^{\#, alg}_{\pi}
\]
where
\[
\mc{K}^{\#, alg}_{\pi} := \on{Tot} (  \Omega^{0,*}_{X,c} \otimes H^0(F, \mc{O}^{alg}_F)  \overset{\partial}{\rightarrow} \Omega^{1,*}_{X,c} \otimes  H^0(F, \mc{O}^{alg}_F) \oplus \Omega^{0,*}_{X, c} \otimes H^{0}(F, \Omega^{1, alg}_F )) .
\]
The totalization is respect to the horizontal $\partial$-operator and the the vertical $\delbar$-operator acting on  $\Omega^{p,*}_{X, c}$. 
\end{dfn}

\begin{rmk}
Again, $\ffga$ may be equipped with the $\DVS$ structure of Example \ref{tensor_DVS}, yielding a prefactorization algebra in $dg-\DVS$.  
\end{rmk}

We can now define the main object of study for us.

\begin{dfn}
The $n$-dimensional {\em toroidal prefactorization algebra} associated with the holomorphic fibration $F \to E \xto{\pi} \CC^n$ is the prefactorization algebra
\[
\sG^{alg}_{\fg, \pi} :=  \Ch \left( \ffga \right)
\]
with the structure maps induced from those of $\sG_{\fg, \pi}$. 
\end{dfn}

\begin{prop}
Suppose that $F$ is a smooth complex affine variety and $X=\CC^n$. Then
$\sG^{alg}_{\fg,\pi}$ has the structure of a holomorphically translation-invariant pre-factorization algebra valued in $dg-\DVS$. 
\end{prop}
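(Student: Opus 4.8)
The plan is to deduce the statement from the structure already established on $\sG_{\fg,\pi}$ by exhibiting $\ffga$ as a sub-$L_\infty$-cosheaf of $\ffg$ and then invoking the factorization envelope construction together with the translation-invariance arguments of Proposition \ref{prop: f}. Concretely, I would first check that the inclusions $H^0(F,\mc{O}^{alg}_F) \hookrightarrow H^0(F,\mc{O}_F)$ and $H^0(F,\Omega^{1,alg}_F) \hookrightarrow H^0(F,\Omega^1_F)$ assemble, after tensoring with the base factors $\Omega^{0,*}_{X,c}$ and $\Omega^{1,*}_{X,c}$, into an inclusion of dg vector spaces $\ffga \hookrightarrow \ffg$. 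The content to verify is that this inclusion is closed under all the $L_\infty$ operations, i.e.\ that the algebraic locus is a sub-$L_\infty$-algebra.

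The key step is therefore that the cocycle $\phi = \phi^{(0)}+\phi^{(1)}$ of Lemma \ref{lem: cocycle}, together with the differentials $\delbar$ and $\partial$, preserve $\ffga$. The bracket $\ell_2$ and the ternary operation $\ell_3 = \phi^{(0)}$ only involve the wedge/multiplication of fiber components and the Lie bracket on $\fg$; since $H^0(F,\mc{O}^{alg}_F)$ is a subalgebra of $H^0(F,\mc{O}_F)$ it is closed under products, so these land back in $\ffga$. The remaining point is the holomorphic de Rham operator $\partial$ appearing in $\phi^{(1)}$ and in the definition of $\mc{K}^{\#,alg}_\pi$: its fiber component sends a regular function $u$ to $\dd u$, and because $F$ is smooth and affine this is a global \emph{algebraic} Kähler differential, i.e.\ lies in $H^0(F,\Omega^{1,alg}_F)$, while its base component lands in $\Omega^{1,*}_{X,c}$. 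Thus $\partial$ maps $\ffga$ into itself, $\ffga$ is a sub-$L_\infty$-cosheaf of $\ffg$, and taking Chevalley chains yields both the map $\sG^{alg}_{\fg,\pi}\to\sG_{\fg,\pi}$ and, by the factorization envelope construction, the prefactorization structure on $\sG^{alg}_{\fg,\pi}=\Ch(\ffga)$ (fineness of the cosheaf is inherited from the base, as $\CC^n$ admits partitions of unity and the fiber factor is a fixed vector space).

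It remains to record the $dg$-$\DVS$ structure and holomorphic translation-invariance. The $\DVS$ structure comes from Example \ref{tensor_DVS}: each summand of $\ffga$ is a base factor $\Omega^{p,*}_{X,c}$ (a differentiable vector space of compactly supported sections of a bundle on $\CC^n$) tensored with one of the \emph{fixed} vector spaces $H^0(F,\mc{O}^{alg}_F)$ or $H^0(F,\Omega^{1,alg}_F)$, so $\ffga$ is a $dg$-$\DVS$, and $\Ch(\ffga)$, being built from it by algebraic tensor products, is again a $dg$-$\DVS$. For translation-invariance I would repeat the argument of Proposition \ref{prop: f} verbatim: translations act only on the base $\CC^n$, giving discrete and then smooth translation-invariance exactly as for the free field; and the degree $-1$ derivations $\eta_i$ act through the base Dolbeault factor $\Omega^{0,*}_{X,c}$, satisfying $[\dd,\eta_i]=\partial/\partial\zbar_i$ with $[\eta_i,\eta_j]=[\eta_i,\partial/\partial\zbar_j]=0$. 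The main (and essentially only) obstacle is the compatibility check in the previous paragraph --- that passing from analytic to algebraic fiber data does not disturb the $L_\infty$ structure --- which reduces to the fact that $\dd$ of a regular function on a smooth affine variety is again regular; everything else is inherited from the analytic case, where these structures were already established.
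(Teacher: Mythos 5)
Your proposal is correct and follows essentially the same route as the paper: the paper's (terse) justification is precisely that $\ffga$ is a sub-$L_\infty$-cosheaf of $\ffg$ because the cocycle $\phi$ and the differentials restrict to the algebraic fiber data, with the $\DVS$ structure supplied by Example \ref{tensor_DVS} and translation-invariance inherited verbatim from the argument of Proposition \ref{prop: f}. Your identification of the one nontrivial point --- that $\partial$ of a regular function on the smooth affine $F$ is again an algebraic K\"ahler differential, so $\partial$ and $\phi^{(1)}$ preserve $\ffga$ --- is exactly the observation the paper relies on.
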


The reasoning at the end of the previous section shows that for a general locally trivial fibration $\pi: E \to X$, a choice of point $x \in X$, local coordinates $z_1, \cdots, z_n$ on $x \in U \subset X$, and trivialization of $E \vert_U$, yields prefactorization algebra maps
$$ \sG^{alg}_{\fg,\pi} \to  \sG_{\fg, \pi} \to \sF_{\fg, \pi} \vert_U $$

\begin{rmk}
We remark that in this paper we are primarily concerned with the case in which the fiber $F$ is a smooth complex affine variety. 
There is another interesting case in which we take the fibers to be compact. 
One can still study the pushforward $\pi_*(\Hat{\fg}_E)$ as a sheaf of $L_\infty$ algebras, and its factorization envelope. 
Unlike the affine case, this pushforward has interesting cohomology in the fiber direction, and moreover the factorization enveloping algebra is equipped with the analytic Gauss-Manin connection. 
The case of a trivial fibration has been observed in Section 4.3 of \cite{GWkm}. 
\end{rmk}

%
%

\section{$X=\CC$ and vertex algebras} \label{one_dim_section}

In \cite{CG}, it is shown that prefactorization algebras on $X=\CC$ which are holomorphically translation-invariant and $S^1$-equivariant for the natural action by rotations are closely related to vertex algebras. More precisely, given such a prefactorization algebra $\cF$, the vector space
\[
V(\cF)=\bigoplus_{l}  H^*(\cF^{(l)}(\CC))
\]
equal to the direct sum of $S^1$-eigenspaces in the cohomology $H^*(\cF(\CC))$ has a vertex algebra structure.   
We begin by reviewing this correspondence following \cite{CG}, and then apply it to the case of the one-dimensional toroidal prefactorization algebra $\cG^{alg}_{\fg, \pi}$, where $\pi: \CC \times F \mapsto \CC$ is the trivial fibration on $\CC$ with fiber a smooth complex affine variety $F$.  
We show that resulting vertex algebra is isomorphic to $V(\fghat_{R})$ where $R=H^*(F, \mc{O}^{alg}_F)$ from Section \ref{VR}. 
As a special case, when $F=(\CC^{*})^k$, we recover a toroidal vertex algebra.

\subsection{Prefactorization algebras on $\CC$ and vertex algebras}

We review here the correspondence between prefactorization algebras on $\CC$ and vertex algebras established in \cite{CG}, where we refer the reader for details. Recall that $S^1$ acts on $\CC$ by rotations via $z \mapsto \exp(i \theta) z$. Suppose that $\cF$ is a prefactorization algebra on $\CC$ that is holomorphically translation-invariant and $S^1$-equivariant. Let $\sF(r) := \sF(D(0,r))$ be the complex assigned by $\sF$ to a disk of radius $r$ (we allow here $r=\infty$, in which case $D(0, \infty) = \CC$), and $\sF^{(l)}(r) \subset \sF(r)$ be the $l$th eigenspace for the $S^1$-action. The following theorem from \cite{CG} establishes a bridge between prefactorization and vertex algebras:

\begin{theorem}[Theorem 5.2.2.1 \cite{CG}] \label{fv} Let $\sF$ be a unital $S^1$-equivariant holomorphically translation invariant  prefactorization algebra on $\CC$. Suppose
\begin{itemize}
\item The action of $S^1$ on $\sF(r)$ extends smoothly to an action of the algebra of distributions on $S^1$. 
\item For $r < r'$ the map 
\[
\sF^{(l)}(r) \to \sF^{(l)}(r')
\]
is a quasi-isomorphism.
\item The cohomology ${\rm H}^*(\sF^{(l)}(r))$ vanishes for $l \gg 0$.
\item For each $l$ and $r > 0$ we require that ${\rm H}^*(\sF^{(l)}(r))$ is isomorphic to a countable sequential colimit of finite dimensional vector spaces. 
\end{itemize}
Then $V(\sF) := \bigoplus_l {\rm H}^*(\sF^{(l)}(r))$ (which is independent of $r$ by assumption) has the structure of a vertex algebra.
\end{theorem}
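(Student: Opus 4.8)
The plan is to reconstruct the vertex algebra data $(V,\vac,T,Y)$ directly from the prefactorization structure and then to verify the axioms by an application of the reconstruction Theorem \ref{rec_thm}. The underlying space is $V(\sF)=\bigoplus_l H^*(\sF^{(l)}(r))$: the second hypothesis, that $\sF^{(l)}(r)\to\sF^{(l)}(r')$ is a quasi-isomorphism for $r<r'$, makes this independent of $r$ and lets one identify outputs living in disks of different radii, while the fourth hypothesis ensures each graded piece is a controlled (countable colimit of finite-dimensional) vector space so that the sums below make sense. The vacuum $\vac$ is the image of $1$ under the unit map $\CC\to\sF(r)$ of the unital structure, which lands in the weight-zero eigenspace.

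First I would build the translation operator $T$. Holomorphic translation invariance supplies an action of the translation Lie algebra $\CC$ by derivations together with the homotopies $\eta$ trivializing the antiholomorphic direction; passing to cohomology, only the holomorphic generator survives and descends to a degree-zero operator $T\colon V\to V$, with $T\vac=0$ by translation invariance of the unit. Next comes the state--field correspondence, which is the crux. Given $a\in H^*(\sF^{(k)}(\epsilon))$, the factorization product for a small disk $\PD_\epsilon(z)$ carrying $a$, disjoint from a disk $\PD_\delta(0)$ carrying a second input, all inside $\PD_r(0)$, yields on cohomology a map which by the operadic holomorphicity recorded in (\ref{mp_hol}) is holomorphic in the insertion point $z$ on the punctured region $0<|z|<r$. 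I would define $Y(a,z)$ to be the Laurent expansion of this $V$-valued holomorphic function, its modes $A_n^a$ being the coefficients. The first hypothesis, smooth extension of the $S^1$-action to distributions on $S^1$, legitimizes the Fourier expansion in the angular variable while holomorphicity controls the radial one; the $S^1$-weight $k$ of $a$ fixes the homogeneity and hence the indexing of modes, and the third hypothesis, vanishing of $H^*(\sF^{(l)}(r))$ for $l\gg 0$, truncates the expansion from above, giving the field property $A_n^a v=0$ for $n\gg 0$.

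Finally I would check the axioms. The vacuum axiom $Y(\vac,z)=\mathrm{id}$ is unitality of the factorization product, and $Y(v,z)\vac\in v+zV\llbracket z\rrbracket$ follows from the $z\to 0$ collision of the two disks returning $v$ together with holomorphicity near the origin. The translation axiom $[T,Y(v,z)]=\partial_z Y(v,z)$ holds because $T$ implements infinitesimal translation and the product is translation covariant, so displacing the insertion point differentiates the holomorphic dependence. The hardest step is locality: I would compare the two Laurent expansions of the product obtained from the regions $|z|>|w|$ and $|w|>|z|$, using associativity and the $S_n$-equivariance of the structure maps to identify them as the two radial expansions of a single holomorphic function on the configuration space of two disjoint punctured disks, with at worst a pole along the diagonal $z=w$; clearing this pole by $(z-w)^N$ yields locality. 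Mutual locality together with the spanning property then permits invoking Theorem \ref{rec_thm} to conclude that $(V,\vac,T,Y)$ is a vertex algebra. The main obstacle is precisely this identification of the two radial expansions with a common meromorphic object, where the operadic holomorphicity of $m[p]$ and the distributional $S^1$-action must be combined with care.
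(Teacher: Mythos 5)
Your construction follows essentially the same route as the paper's sketch (and the proof in \cite{CG} to which the paper defers): the vertex operator is the Laurent expansion of the two-disk structure map $m_{0,z}$, holomorphic in the insertion point by holomorphic translation invariance, with $T$ induced by the surviving holomorphic translation on cohomology, the vacuum coming from the unit, and locality coming from the extension of the structure maps over configuration space. One small remark: Theorem \ref{fv} has no spanning hypothesis, so your closing appeal to the reconstruction theorem is superfluous---since you define $Y(a,z)$ for \emph{every} $a\in V(\sF)$ and verify the field, vacuum, translation, and locality axioms for all of them directly, these already constitute the full set of vertex algebra axioms and nothing remains to be reconstructed.
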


We briefly sketch how the vertex algebra structure on $V(\sF)$ can be extracted from the prefactorization structure on $\sF$.

\begin{itemize}
\item Polydisks in one dimension are simply disks, and we denote $ \PD(r_1, \cdots, r_k \vert s )$ by $\D(r_1, \cdots, r_k \vert s )$. If $r'_i < r_i$, we obtain an inclusion 
\begin{equation} \label{disk_inclusion}
\D(r_1, \cdots, r_k \vert s) \subset  \D(r'_1, \cdots, r'_k \vert s )
\end{equation}
In the limit $\lim_{r_i \to 0}$, these spaces approach $\on{Conf}_k$, the configuration space of $k$ distinct points in $\CC$.  
\item The structure maps \ref{mp_hol} are compatible with the maps $\cF_{r'_i} \mapsto \cF_{r_i}$ and the inclusions \ref{disk_inclusion}, and one may take $\lim_{r_i \to 0}$, $s=\infty$, obtaining maps
\begin{equation} \label{limit_structure}
m: (\lim_{r \to 0} H^{*}(\cF_r) )^{\otimes k}  \to \on{Hol}( \on{Conf}_k, H^{*}(\cF(\CC))
\end{equation}
\item We set $k=2$, and fix one of the points to be the origin. There is a natural map $V(\cF) \mapsto \lim_{r \to 0} H^{*}(\cF_r) $, as well as projections 
$H^{*}(\cF(\CC)) \to H^{*}(\cF^{(l)}(\CC))$. Pre and post-composing by these in \ref{limit_structure}, yields a map
\begin{equation} \label{vop}
\ol{m_{0,z}} : V(\cF) \otimes V(\cF) \to \prod_{l} \on{Hol}(\CC^{\times}, V(\cF)_l) 
\end{equation}
where $V(\cF)_l = H^{*}(\cF^{(l)}(\CC))$.
 Laurent expanding $\overline{m_{0,z}}$ we obtain
\[
\overline{m_{0,z}} : V(\cF) \otimes V(\cF) \to \prod_l V(\cF)_l [[z,z^{-1}]]
\] 
whose image can be shown to lie in $V(\cF)((z))$. The vertex operator can now be defined by
\begin{align*}
Y: V(\cF) & \to \on{End}(V(\cF))[[z,z^{-1}]] \\
Y(v,z) v'  &= m_{0,z}(v', v) 
\end{align*}
\item Holomorphic translation invariance yields an action of $\partial_z$
\[
\partial_z : \sF^{(l)}(r) \to \sF^{(l-1)}(r) .
\]
which descends to $H^*(\cF^{l}(r))$. This induces the translation operator $T : V(\cF) \to V(\cF)$. 
\item The vacuum vector is obtained from the unit in $\cF(\emptyset)$. 

\end{itemize}

\subsection{The main theorem}

Our goal in this section is to prove the following theorem

\begin{theorem} \label{main_theorem}
Let $F$ be a smooth complex affine variety, and $\pi: \CC \times F \to \CC$ the trivial fibration with fiber $F$. Then
\begin{enumerate}
\item The toroidal prefactorization algebra $\sG^{alg}_{\fg,\pi}$ satisfies the hypotheses of Theorem \ref{fv}
\item  The vertex algebra $V(\sG^{alg}_{\fg,\pi})$ is isomorphic to the toroidal vertex algebra $V(\fghat_{\R})$, with $R = H^0(F, \mc{O}^{alg}_F)[t,t^{-1}]$ defined in Section \ref{VR}.
\end{enumerate}
\end{theorem}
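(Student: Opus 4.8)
\emph{Part (1).} The design of $\ffga$ is precisely what makes this verification tractable: since it is built from honest algebraic tensor products rather than completed ones, its $\delbar$-cohomology satisfies a K\"unneth decomposition separating the base $\CC$ from the fixed fiber data $H^0(F,\mc{O}^{alg}_F)$ and $H^0(F,\Omega^{1, alg}_F)$. The plan is first to compute $H^*(\ffga(D_r))$ on a disk $D_r = D(0,r)$, thereby reducing everything to the compactly supported Dolbeault cohomology $H^*_c(D_r,\Omega^{0,*}_{\CC})$ of the disk --- exactly the computation underlying the affine Kac-Moody case treated in \cite{CG}. This cohomology is concentrated in a single degree and is independent of $r$, which yields the quasi-isomorphism hypothesis $\sF^{(l)}(r)\to\sF^{(l)}(r')$; its $S^1$-eigenspaces are indexed by powers of the coordinate $t$. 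After tensoring with the fixed spaces $H^0(F,\mc{O}^{alg}_F)$ and $H^0(F,\Omega^{1, alg}_F)$ and applying $\Ch=\Sym(-[1])$, each weight space $H^*(\ffga^{(l)}(r))$ is a countable colimit of finite-dimensional spaces and vanishes for $l\gg 0$. The smoothness of the $S^1$-action and its extension to distributions on $S^1$ is inherited from the Dolbeault model exactly as in the free-field examples of \cite{CG}, and I would verify it identically.

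\emph{Part (2), vector spaces.} Next I would identify the underlying graded vector spaces. The compactly supported Dolbeault cohomology of $\CC$ is concentrated in degree one, so $H^*(\ffga(\CC))$ is a graded vector space $W$ which I would identify with the span of the negative modes $J\otimes u\,t^{n}$ ($n<0$, $u\in H^0(F,\mc{O}^{alg}_F)$) of $\fg\otimes H^0(F,\mc{O}^{alg}_F)[t,t^{-1}]$, together with central directions coming from $\mc{K}^{\#, alg}_{\pi}$; the latter reproduce $\Omega^1_R/dR$ in exact analogy with the identification $H^0(\cK_R)=\Omega^1_R/dR$ from Section \ref{Lie_section}. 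Because the wedge product of two $(0,1)$-forms vanishes on the curve $\CC$, the higher $L_\infty$ operations induce no differential on $W$ after passing to cohomology, and $\Ch=\Sym(-[1])$ gives $H^*(\Ch(\ffga(\CC)))\cong\Sym(W)$. By PBW this is identified with $\on{Ind}^{\fghat_R}_{\fghat^+_R}\CC = V(\fghat_R)$ as a graded vector space, with the grading matching the $S^1$-weight decomposition of part (1).

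\emph{Part (2), vertex structure.} To upgrade this to an isomorphism of vertex algebras I would invoke the reconstruction theorem \ref{rec_thm}. The three classes of generating fields of $V(\fghat_R)$ from Section \ref{VR} --- namely $J_u(z)$, $K_{u\frac{dt}{t}}$ and $K_{t^{-1}\omega}$ --- are to be realized as the images of the corresponding classes in $W$ under the factorization vertex operator $\ol{m_{0,z}}$ built in the sketch following Theorem \ref{fv}; Laurent expanding the factorization product of two disks produces exactly the defining mode sums. By reconstruction it then suffices to check that the single nontrivial operator product, that of two copies of $J_u(z)$, agrees on both sides. This is where the cocycle $\phi=\phi^{(0)}+\phi^{(1)}$ of Lemma \ref{lem: cocycle} enters: $\phi^{(0)}$ supplies the pointwise bracket $[J^1,J^2]_{uv}(w)$ together with the $\langle J^1,J^2\rangle K_{t^{-1}u\,dv}(w)$ term, while $\phi^{(1)}$, paired against the disk Dolbeault class through the integration map $\int:\cK_{E,c}\to\underline{\CC}[-1]$, supplies the $\langle J^1,J^2\rangle K_{uv\frac{dt}{t}}(w)\,\partial_w\delta(z-w)$ term. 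Matching these residues against the commutation relations recorded in Section \ref{VR} completes the identification.

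\emph{Main obstacle.} The delicate step is this last operator-product computation: tracking how the holomorphic derivative $\partial$ on the base and the fiber differential combine inside $\phi$, and how the residue pairing coming from $H^1_c(D_r,\Omega^{0,*}_{\CC})$ converts the factorization product into the $\delta(z-w)$ and $\partial_w\delta(z-w)$ terms with the correct coefficients and the correct separation of the two distinct central fields $K_{u\frac{dt}{t}}$ and $K_{t^{-1}\omega}$. Confirming that the normalization $\tfrac12(r\,ds-s\,dr)$ in $\phi^{(1)}$ reproduces exactly the central term of $\fghat_R$, and that the well-definedness relation $Y(d(t^{-1}u)\vac,z)=0$ holds on the factorization side, is the bookkeeping at the heart of the theorem.
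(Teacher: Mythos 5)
Your Part (1) tracks the paper's verification closely: the algebraic tensor product structure of $\ffga$, concentration of the compactly supported Dolbeault cohomology of a disk in a single degree, vanishing of the weight spaces for $l \gg 0$, and the colimit condition (which the paper makes precise by embedding $F \subset \mathbb{A}^N$ and filtering $\CC[F]$ by polynomial degree). Your identification of the underlying graded vector space with $\on{Sym}(\fghat_S/\fghat^+_S)$ is also the content of the paper's Lemma \ref{coh_fa}. Where you diverge is in the organization of Part (2). The paper does not go straight to reconstruction: it first pushes forward along $\rho(z)=|z|^2$ and constructs an explicit homomorphism of prefactorization algebras $\Phi: \mc{A}\mc{U}(\fghat_R) \to \rho_* H^*(\sG^{alg}_{\fg,\pi})$ on $\RR_{>0}$ (Lemma \ref{homo_prop}), using bump functions on annuli and explicit homotopies $F_m$ to verify both well-definedness ($\Phi_I(d(ut^k))=0$) and the commutation relations. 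This is what endows $V(\sG^{alg}_{\fg,\pi})$ with a $\fghat_R$-module structure and produces the module isomorphism $\eta$, and it is precisely this module isomorphism that delivers the spanning hypothesis of the reconstruction theorem. Your ``PBW'' identification is only an isomorphism of graded vector spaces; without first knowing that the modes of the factorization fields act as the Lie algebra $\fghat_R$, you cannot conclude that they span $V(\sG^{alg}_{\fg,\pi})$ from the vacuum, so the reconstruction theorem does not yet apply. In effect your route presupposes the hardest lemma of the paper rather than replacing it.

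There is also a concrete misattribution in your sketch of the operator product. The non-central term $[J^1,J^2]_{uv}(w)\,\delta(z-w)$ comes from the ordinary Lie bracket $\ell_2$ of $\fg_E$, not from $\phi^{(0)}$; and \emph{both} central terms come from $\phi^{(1)}$, whose holomorphic de Rham operator $\partial$ on $E=\CC\times F$ splits into a fiber component (producing $\langle J^1,J^2\rangle K_{t^{-1}u\,dv}(w)\,\delta(z-w)$) and a base component (producing $\langle J^1,J^2\rangle K_{uv\frac{dt}{t}}(w)\,\partial_w\delta(z-w)$ via the residue pairing). The trilinear component $\phi^{(0)}$ is the $\ell_3$ bracket and does not enter the binary commutator at all. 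Since you identify this computation as the heart of the argument, getting the source of each term right matters: the paper's explicit calculation in Lemma \ref{homo_prop} shows exactly this decomposition, together with the integration-by-parts identities $\int z^a f_I\,dz\,d\zbar = \delta_{a,0}$ and $\int z^b\,\partial f\wedge d\zbar = -\delta_{b,1}$ that convert the base contribution into the $\partial_w\delta$ term.
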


Throughout this section, $R$ will denote the algebra $H^0(F, \mc{O}^{alg}_F)[t,t^{-1}]$. We will denote $H^{0}(F, \mc{O}^{alg}_F)$ simply by $\CC[F]$, so $R=\CC[F][t,t^{-1}]$. Recall that 
\begin{align*}
\fghat_R &= \fg \otimes \CC[F][t,t^{-1}] \oplus \Omega^1_{\CC[F][t,t^{-1}]} / d (\CC[F][t,t^{-1}])  \\
	&= \fg \otimes \CC[F][t,t^{-1}] \oplus  \frac{ \CC[t,t^{-1}]\otimes\Omega^1_{\CC[F]} \oplus \CC[F] \otimes \Omega^1_{\CC[t,t^{-1}]}}{ \langle t^k du + k t^{k-1} u dt  \rangle}
\end{align*}

\subsubsection{Recollections on Dolbeault cohomology and preliminary computations}

In this section we recall some facts regarding ordinary and compactly supported Dolbeault cohomology and apply these to compute $H^*( \sG^{alg}_{\fg,\pi}(U)) $ over opens $U \subset \CC$. These results will be used in proving Theorem \ref{main_theorem}. 

Stein manifolds are complex analytic analogues of smooth affine varieties over $\CC$ \cite{Stein}.
In particular, $\CC^n$ are the basic examples of Stein manifolds.
In addition, all open subsets $U \subset \CC$ are Stein. 
We recall the following classic result pertaining to Stein manifolds:

\begin{theorem}[Cartan's Theorem B] \label{CartanB}
Let $X$ be a Stein manifold. Then
\[
H^k(\Omega^{p,*}(X), \delbar) =\begin{cases} 
      0 & k \neq 0 \\
      \Omega^p_X & k = 0 
   \end{cases}
\]
where $\Omega^p_X$ denotes the space of holomorphic $p$-forms on $Z$. 
\end{theorem}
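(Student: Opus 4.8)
The plan is to recognize that the final statement is simply Cartan's Theorem B packaged in Dolbeault form, so the strategy is to reduce it to the standard vanishing theorem for coherent analytic sheaves on Stein manifolds rather than to reprove that theorem from scratch. First I would recall that on any complex manifold $X$, the Dolbeault complex $(\Omega^{p,*}(X), \delbar)$ is a fine resolution of the sheaf $\Omega^p_X$ of holomorphic $p$-forms: the Dolbeault--Grothendieck lemma gives local exactness in positive degree, and the sheaves $\Omega^{p,q}$ are fine (admitting partitions of unity), so the abstract de Rham/Dolbeault theorem identifies the cohomology of the global sections complex with the sheaf cohomology groups $H^k(X, \Omega^p_X)$.

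Next I would invoke Cartan's Theorem B in its sheaf-theoretic form: on a Stein manifold every coherent analytic sheaf $\mathcal{S}$ has vanishing higher cohomology, $H^k(X, \mathcal{S}) = 0$ for all $k \geq 1$. Since $\Omega^p_X$ is locally free of finite rank, hence coherent, this yields $H^k(X, \Omega^p_X) = 0$ for $k \neq 0$. Combining with the Dolbeault isomorphism of the previous step gives $H^k(\Omega^{p,*}(X), \delbar) = 0$ for $k \neq 0$. The degree-zero piece is then identified directly: a $\delbar$-closed $(p,0)$-form is precisely a holomorphic $p$-form, with no coboundaries in degree zero, so $H^0 = \Omega^p_X$ as claimed.

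The main obstacle here is not really a difficulty but a matter of citation and scope: the genuine content is the Cartan--Serre vanishing theorem, whose proof requires the full machinery of $\bar\partial$-estimates or the theory of coherent sheaves on Stein spaces, and which it would be inappropriate to reproduce. Thus the honest approach is to cite Theorem B directly (as the statement already does via \cite{Stein}) and restrict the proof to the two routine reductions above, namely the Dolbeault resolution and the degree-zero identification. One minor point worth stating carefully is that the theorem as displayed applies to an arbitrary Stein manifold $X$, and in the intended application the relevant spaces are open subsets $U \subset \CC$ and products involving the affine variety $F$; all of these are Stein, so the hypothesis is met and no separate argument is needed.
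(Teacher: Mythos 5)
Your proposal is correct: the paper itself offers no proof of this statement, simply recalling it as a classical result with a citation, and your reduction---Dolbeault complex as a fine resolution of $\Omega^p_X$, sheaf-theoretic Theorem B for the coherent sheaf $\Omega^p_X$, and the direct identification of the degree-zero cohomology with holomorphic $p$-forms---is exactly the standard route one would take and is consistent with how the paper treats the result. Your closing remark that the relevant spaces in the application (open subsets of $\CC$ and the affine fiber $F$) are all Stein is also accurate and matches the paper's usage.
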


\begin{rmk}
When $n >1$ the open subset $\CC^n \setminus 0 \subset \CC^n$ is {\em not} Stein, since it has higher cohomology. 
\end{rmk}

On a complex manifold $X$ of dimension $n$, Serre duality implies that there is a non-degenerate pairing between ordinary and compactly supported forms
\begin{align*}
\Omega^{p,q}_{X,c} & \otimes \Omega^{n-p, n-q}_X  \to \CC \\
\alpha & \otimes \beta \to \int_X \alpha \wedge \beta .
\end{align*}
Thus, compactly supported differential forms yield continuous linear functionals on differential forms. 
At the level Dolbeault cohomology, one obtains the following corollary to Threorem \ref{CartanB} noted by Serre (\cite{Serre}): 

\begin{cor} \label{Serre_corollary}
Let $X$ be a Stein manifold. Then
\[
H^k(\Omega^{p,*}_c (X), \delbar) =\begin{cases} 
      0 & k \neq dim(X) \\
      (\Omega^{n-p}_X (X))^{\vee} & k =n = dim(X)
   \end{cases}
\]
where $(\Omega^{n-p}_X (X) )^{\vee}$ denotes the continuous dual to the space of holomorphic $n-p$ forms with respect to the Frechet topology. 
\end{cor}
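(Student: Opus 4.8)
The plan is to deduce the statement from Serre duality together with Cartan's Theorem B (Theorem \ref{CartanB}). The Serre duality pairing
\[
\Omega^{p,q}_{X,c} \otimes \Omega^{n-p,n-q}_X \to \CC, \qquad \alpha \otimes \beta \mapsto \int_X \alpha \wedge \beta,
\]
is compatible with the $\delbar$ operators on the two factors: integration by parts (Stokes' theorem), valid since $\alpha$ has compact support, gives $\int_X \delbar \alpha \wedge \beta = \pm \int_X \alpha \wedge \delbar \beta$. Hence the pairing descends to a pairing on Dolbeault cohomology
\[
H^k(\Omega^{p,*}_c(X), \delbar) \otimes H^{n-k}(\Omega^{n-p,*}(X), \delbar) \to \CC.
\]

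First I would invoke Serre's duality theorem, which asserts that whenever the ordinary Dolbeault cohomology groups are Hausdorff in their natural Fr\'echet topology, this pairing is perfect, inducing an isomorphism
\[
H^k(\Omega^{p,*}_c(X), \delbar) \cong \left( H^{n-k}(\Omega^{n-p,*}(X), \delbar) \right)^{\vee}
\]
onto the continuous dual. On a Stein manifold the required separation hypothesis is automatic: by Cartan's Theorem B the complex $(\Omega^{n-p,*}(X), \delbar)$ has cohomology concentrated in degree $0$, equal to the Fr\'echet space $\Omega^{n-p}_X(X)$ of holomorphic $(n-p)$-forms, and vanishing in all positive degrees.

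The conclusion is then immediate. Applying Theorem \ref{CartanB} to the right-hand side, the group $H^{n-k}(\Omega^{n-p,*}(X), \delbar)$ vanishes unless $n - k = 0$, i.e.\ $k = n$, in which case it equals $\Omega^{n-p}_X(X)$. Therefore $H^k(\Omega^{p,*}_c(X), \delbar) = 0$ for $k \neq n$, while $H^n(\Omega^{p,*}_c(X), \delbar) \cong (\Omega^{n-p}_X(X))^{\vee}$, exactly as claimed.

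I expect the only genuine subtlety --- and hence the main point requiring care rather than real difficulty --- to be the functional-analytic input behind Serre duality: one must equip the spaces of forms with the correct (Fr\'echet, respectively dual-Fr\'echet) topologies and verify that the cohomology is separated, so that the pairing is nondegenerate and identifies the compactly supported cohomology with the \emph{full} topological dual. The Stein hypothesis, via Theorem \ref{CartanB}, is precisely what supplies this separation, so once that theorem is in hand the remaining steps are formal.
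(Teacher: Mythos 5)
Your argument is correct and is exactly the one the paper intends: the corollary is deduced from the Serre duality pairing together with Cartan's Theorem B, with the Stein hypothesis (via Theorem \ref{CartanB}) supplying the separation of the ordinary Dolbeault cohomology needed for the pairing to identify $H^k(\Omega^{p,*}_c(X),\delbar)$ with the full continuous dual. The paper simply states this as a consequence noted by Serre without spelling out the functional-analytic point you flag, so your write-up is if anything slightly more complete.
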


We now turn to our situation.
Here, we are looking at the holomorphic fibration $F \to E \xto{\pi} X$ where $X=\CC$, $F$ is a complex affine variety, and $\pi$ is the trivial fibration. 
The cosheaf $\ffga$ on $\CC$ defined in Section \ref{sec: algvar} has the form
\[
\ffga := \fg \otimes \CC[F] \otimes  \Omega^{0,*}_{X,c} \oplus \mc{K}^{\#, alg}_{\pi}
\]
where $\mc{K}^{\#, alg}_{\pi}$ is the total complex of the following double complex
\[\begin{tikzcd}
\Omega^{0,1}_c \otimes \CC[F] \arrow{r}{\partial + d }  & \Omega^{1,1}_c \otimes \CC[F] \oplus \Omega^{0,1}_c \otimes \Omega^1_{\CC[F]} \\
\Omega^{0,0}_c \otimes \CC[F] \arrow{r}{\partial + d} \arrow[swap]{u}{\delbar} & \Omega^{1,0}_c \otimes \CC[F] \oplus \Omega^{0,0}_c \otimes \Omega^1_{\CC[F]}  \arrow{u}{\delbar} .
\end{tikzcd}
\]
Here, $\Omega^{p,q}_c$ denotes the cosheaf of compactly supported forms on $\CC$, and $\Omega^1_{\CC[F]}$ is the space of \emph{algebraic} $1$-forms (i.e. Kahler differentials) on $F$. 
Recall, from this cosheaf we have defined the factorization algebra
\[
\sG^{alg}_{\fg,\pi} = \Ch( \ffga) = \on{Sym}(\ffga[1], \wt{d})
\]
where the differential $\wt{d}$ may be decomposed as  $\wt{d} = d_1 + d_2 + d_3$, with $$d_i : \on{Sym}^i (\ffga[1]) \to \ffga[1]$$ of cohomological degree $1$ defined by:
\begin{itemize}
\item $ d_1 = \delbar + d_{\mc{K}^{\#, alg}_{\pi}}$,
the linear differential operators defining the underlying cochain complexes of Dolbeault forms;
\item $d_2 = d_{CE,\fg}$,
the Chevalley-Eilenberg differential induced from the Lie bracket on $\fg$;
\item $d_3 = \phi$,
where $\phi$ is the cocycle defined in Section \ref{sec: cocycle}, which extends to $\Ch(-)$ by the rule that it is a coderivation. 
\end{itemize}

The complex $ \Ch( \ffga) $ has an increasing filtration by symmetric degree, leading to a spectral sequence whose $E_0$ page is $\on{Sym}(\ffga[1], d_1)$, as $d_2$ and $d_3$ lower symmetric degree. We have
\[
H^*(\on{Sym}(\ffga[1], d_1)) = \on{Sym}(H^*(\ffga[1], d_1))
\]
Now $(\ffga, d_1)$ is the direct sum of the complexes $(\fg \otimes \CC[F] \otimes \Omega^{0,*}_c, \delbar)$ and $\mc{K}^{\#, alg}_{\pi}$. Applying Theorem \ref{CartanB} on an open Stein subset $U \subset \CC$, the cohomology of the first is $$\fg \otimes \CC[F] \otimes (\Omega^1 (U))^{\vee}.$$ Similarly, by first computing the $\delbar$ cohomology in $\mc{K}^{\#, alg}_{\pi}$, we have
\[
H^*(\mc{K}^{\#, alg}_{\pi}(U) ) =  \on{Coker} \left( (\Omega^1_X (U))^{\vee} \otimes \CC[F] ) \overset{1 \otimes \partial + \partial^{\vee} \otimes 1}{\rightarrow} (\Omega^{1}_X (U))^{\vee} \otimes \Omega^1_{\CC[F]} \oplus (\mc{O} (U))^{\vee} \otimes \CC[F] \right)
\]
where $\partial^{\vee}$ denotes the transpose of $\partial: \Omega^{n-1}_X(U) \mapsto \Omega^n_X (U)$.
We have the following:

\begin{lemma}
Let $U \subset \CC$ be an open subset. Then 
\begin{align*}
H^*(\ffga(U), d_1) &= \fg \otimes (\Omega^1_X (U))^{\vee} \otimes \CC[F]  \bigoplus \\
& \on{Coker} \left( (\Omega^1_X (U))^{\vee} \otimes \CC[F] ) \overset{1 \otimes \partial + \partial^{\vee} \otimes 1}{\rightarrow} (\Omega^{1}_X (U))^{\vee} \otimes \Omega^1_{\CC[F]} \oplus (\mc{O} (U))^{\vee} \otimes \CC[F] \right)
\end{align*}
where $\partial^{\vee}$ denotes the transpose of $\partial: \mc{O}_X(U) \mapsto \Omega^1_X (U)$.
\end{lemma}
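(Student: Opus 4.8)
The plan is to use the direct-sum decomposition
\[
\ffga(U) = \big(\fg \otimes \CC[F] \otimes \Omega^{0,*}_{c}(U)\big) \oplus \mc{K}^{\#, alg}_{\pi}(U)
\]
of cochain complexes for the linear differential $d_1 = \delbar + d_{\mc{K}^{\#,alg}_\pi}$, so that $H^*(\ffga(U), d_1)$ splits as the sum of the cohomologies of the two summands, which I would compute separately.

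For the first summand, $d_1$ acts only as $\delbar$ on the factor $\Omega^{0,*}_{c}(U)$, leaving $\fg$ and $\CC[F]$ inert. Every open $U \subset \CC$ is Stein of complex dimension one, so Corollary~\ref{Serre_corollary} (with $p=0$, $n=1$) shows that $H^*(\Omega^{0,*}_{c}(U), \delbar)$ is concentrated in degree one and equal to $(\Omega^1_X(U))^\vee$. Tensoring by $\fg \otimes \CC[F]$ yields the first summand $\fg \otimes (\Omega^1_X(U))^\vee \otimes \CC[F]$ of the claimed answer.

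For the second summand I would run the spectral sequence of the bounded double complex $\mc{K}^{\#, alg}_{\pi}(U)$, filtered so that the zeroth differential is the vertical $\delbar$ and the next differential comes from the horizontal $\partial + d$ (the holomorphic de Rham operator $\partial$ along $X$ together with the Kähler differential $d$ along $F$). The vertical $\delbar$-cohomology is computed column by column using Corollary~\ref{Serre_corollary}: the column $\Omega^{0,*}_{c}(U) \otimes \CC[F]$ gives $(\Omega^1_X(U))^\vee \otimes \CC[F]$; the column $\Omega^{1,*}_{c}(U) \otimes \CC[F]$ gives $(\mc{O}(U))^\vee \otimes \CC[F]$ (now $p=1$, so $(\Omega^{n-p}_X)^\vee = (\mc{O}(U))^\vee$); and $\Omega^{0,*}_{c}(U) \otimes \Omega^1_{\CC[F]}$ gives $(\Omega^1_X(U))^\vee \otimes \Omega^1_{\CC[F]}$. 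All three land in vertical degree $q = 1 = \dim_\CC U$, so the $E_1$-page lies in a single row; all higher differentials vanish for degree reasons, the sequence degenerates at $E_2$, and the cohomology of $\mc{K}^{\#, alg}_{\pi}(U)$ is that of the resulting two-term complex on this row.

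It then remains to identify the surviving horizontal differential and extract the cohomology. Because the $F$-factors are not dualized, the Kähler differential $d : \CC[F] \to \Omega^1_{\CC[F]}$ survives unchanged as $1 \otimes d$, while the holomorphic de Rham operator $\partial$ along $X$ passes under Serre duality to its transpose $\partial^\vee$, the dual of $\partial : \mc{O}(U) \to \Omega^1_X(U)$, contributing $\partial^\vee \otimes 1$. Hence the $E_1$-differential is exactly $1 \otimes \partial + \partial^\vee \otimes 1$, and the cohomology of the two-term complex is its kernel and its cokernel in two consecutive total degrees; the cokernel is precisely the second summand in the statement. The main obstacle is this dualization bookkeeping: one must check that Corollary~\ref{Serre_corollary} is natural in $\partial$, so that the induced $E_1$-map really is the transpose $\partial^\vee$ tensored with the identity, and one must account for the kernel. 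A short computation identifies the kernel with $\ker(\partial^\vee) \otimes \CC \cong (H^1(U;\CC))^\vee$, which vanishes exactly when $U$ is simply connected --- in particular for the disks used in the vertex-algebra application of Section~\ref{one_dim_section} --- so that for such $U$ the cohomology of the second summand reduces to the asserted cokernel.
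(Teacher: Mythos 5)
Your proposal follows essentially the same route as the paper: the paper's justification (given in the paragraph immediately preceding the lemma rather than in a displayed proof) is exactly your decomposition of $(\ffga(U),d_1)$ into the two summands, the use of Corollary \ref{Serre_corollary} to place the compactly supported Dolbeault cohomology of each column in degree one, and the identification of the surviving differential as $1\otimes \partial + \partial^\vee\otimes 1$, of which the cokernel is then taken.

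The one point where you go beyond the paper is the kernel, and you are right to raise it. The cohomology of the surviving two-term complex has a kernel in the adjacent degree as well as the displayed cokernel, and your identification of that kernel as $\ker(\partial^\vee)\otimes\ker(d:\CC[F]\to\Omega^1_{\CC[F]})\cong (H^1(U;\CC))^\vee$ (for $F$ connected) is correct: it vanishes precisely when $U$ is simply connected, e.g.\ a disk, but is one-dimensional for an annulus. The lemma is stated for an arbitrary open $U\subset\CC$ and records only the cokernel, and the subsequent assertion that $H^*(\ffga[1],d_1)$ is concentrated in degree $0$ inherits the same caveat; so as written the statement needs either a simply-connectedness hypothesis or the extra kernel summand. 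This does not affect the disks used in the proof of Theorem \ref{main_theorem}, but it is a genuine omission in the paper's version that your argument correctly isolates.
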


It follows that $H^*(\ffga[1], d_1)$ is concentrated in cohomological degree $0$. Since $\wt{d}$ has cohomological degree $1$, this means that the spectral sequence computing $H^*(\sG^{alg}_{\fg,\pi}(U))$ collapses at $E_1$, and we have an isomorphism of vector spaces
\[
H^*(\sG^{alg}_{\fg,\pi}(U)) \simeq \on{Sym}( H^*(\ffga(U)[1], d_1) )
\]

Suppose now that $U \subset \CC$ is a disk or annulus centered at $0$. We have a non-degenerate residue pairing
\begin{align*}
\Omega^1 (\CC^*) \otimes \mc{O}(\CC^*) & \to \CC \\
\omega \otimes f & \to \on{Res}_{z=0} f \omega
\end{align*}
which yields maps  $\Omega^{1} (\CC^*) \to (\mc{O}(U))^{\vee}$ and $\mc{O}(\CC^*) \to (\Omega^{1} (U))^{\vee}$. 

\begin{lemma} \label{coh_fa}
Let $z$ be a global coordinate on $\CC$, and $U=D(0,r)$ a disk.  There is an isomorphism of vector spaces
\begin{equation}
V(\sG^{alg}_{\fg,\pi}) \simeq \on{Sym}(\fghat_{S}/ \fghat^{+}_{S} ) \simeq U(\fghat_{S}) \otimes_{U(\fghat^{+}_{S}} \CC
\end{equation}
where $S=\CC[F][z,z^{-1}]$.
\end{lemma}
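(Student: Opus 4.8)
The plan is to reduce the statement to the purely algebraic computation of $H^*(\ffga(U)[1],d_1)$ for $U=D(0,r)$ and then invoke the Poincar\'e--Birkhoff--Witt theorem. By Theorem \ref{fv} the underlying vector space $V(\sG^{alg}_{\fg,\pi})$ is the sum of $S^1$-eigenspaces of $H^*(\sG^{alg}_{\fg,\pi}(U))$, and by the spectral-sequence argument preceding this lemma we already have
\[
H^*(\sG^{alg}_{\fg,\pi}(U)) \simeq \on{Sym}\bigl(H^*(\ffga(U)[1],d_1)\bigr).
\]
Since the $S^1$-action rotates the coordinate $z$, each side is graded by $z$-weight with finite-dimensional pieces in each symmetric degree, so the eigenspace sum recovers the full symmetric algebra. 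Thus it suffices to produce a weight-graded linear isomorphism $H^*(\ffga(U)[1],d_1)\simeq\fghat_S/\fghat^+_S$; applying $\on{Sym}$ and then PBW for the triangular decomposition $\fghat_S=\fghat^+_S\oplus(\fghat_S/\fghat^+_S)$ gives the second claimed isomorphism $\on{Sym}(\fghat_S/\fghat^+_S)\simeq U(\fghat_S)\otimes_{U(\fghat^+_S)}\CC$.

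To build the isomorphism I would feed the residue pairings into the formula for $H^*(\ffga(U),d_1)$ from the previous lemma. The residue maps identify $(\Omega^1_X(U))^\vee\cong z^{-1}\CC[z^{-1}]$ and $(\mc{O}(U))^\vee\cong z^{-1}\CC[z^{-1}]\,dz$, i.e.\ with the Laurent objects supported in strictly negative $z$-powers. Under these identifications the first summand $\fg\otimes(\Omega^1_X(U))^\vee\otimes\CC[F]$ becomes $\fg\otimes\CC[F]\otimes z^{-1}\CC[z^{-1}]$, which is exactly the image of $\fg\otimes S$ in $\fghat_S/\fghat^+_S$, namely the negative-mode part of the current algebra. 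The second summand, the cokernel of $1\otimes d+\partial^\vee\otimes 1$, must be matched with the central part $(\Omega^1_S/dS)/(\Omega^1_{\CC[F][z]}/d\CC[F][z])$. Here one computes, by integration by parts under the residue pairing, that $\partial^\vee$ becomes (up to sign) the de Rham differential $g\mapsto -g'\,dz$ on $z^{-1}\CC[z^{-1}]$, so that a class $z^{-n}\otimes u$ in the source maps to $z^{-n}\,du + n\,u\,z^{-n-1}\,dz$; comparing with $d(uz^k)=z^k\,du+k\,u\,z^{k-1}\,dz$ at $k=-n$ shows the image of the cokernel differential is precisely the span of the relations $d(uz^k)$ with $k<0$.

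The main obstacle is this last matching of the central terms, and in particular the correct treatment of $z$-weight zero. The target $z^{-1}\CC[z^{-1}]\,dz\otimes\CC[F]$ contains the weight-zero elements $u\,\tfrac{dz}{z}$, which are genuinely not in $\fghat^+_S$ (since $\tfrac{dz}{z}$ involves a negative power of $z$) and which correspond to the generating fields $K_{u\frac{dt}{t}}$ of $V(\fghat_S)$; one must check that the weight-graded complement of $\Omega^1_{\CC[F][z]}/d\CC[F][z]$ inside $\Omega^1_S/dS$ is exactly this cokernel, including the weight-zero classes, and that no relation $d(uz^k)$ with $k\geq 0$ is inadvertently imposed. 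Once the signs in the residue/transpose conventions are fixed and the weight-zero bookkeeping is verified, the two graded vector spaces agree summand by summand, completing the identification $H^*(\ffga(U)[1],d_1)\simeq\fghat_S/\fghat^+_S$ and hence the lemma.
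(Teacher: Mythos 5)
Your proposal is correct and follows essentially the same route as the paper: reduce to $\on{Sym}(H^*(\ffga(U)[1],d_1))$ via the spectral sequence, use the residue pairing to identify the $S^1$-finite parts of $(\Omega^1_X(U))^\vee$ and $(\mc{O}(U))^\vee$ with negative Laurent modes, and match the cokernel of $1\otimes d+\partial^\vee\otimes 1$ with the central term. The ``main obstacle'' you flag (the weight-zero classes $u\,dz/z$ and the worry about inadvertently imposing relations $d(uz^k)$ with $k\geq 0$) is handled in the paper by the observation that $S=S^+\oplus S^-$ with $d(S^{\pm})\subset\Omega^1_{S^{\pm}}$, so that $\Omega^1_S/dS\simeq\Omega^1_{S^+}/dS^+\oplus\Omega^1_{S^-}/dS^-$ and the complement of $\fghat^+_S$ is exactly $\fg\otimes S^-\oplus\Omega^1_{S^-}/dS^-$.
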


\begin{proof}
We introduce the vector spaces
\begin{align*} 
S^+ &= \CC[F][z] \\
S^{-} &= \CC[F] \otimes z^{-1} \CC[z^{-1}] \\
\Omega^1_{S^{+}} &= \Omega^1_{\CC[F]} \otimes \CC[z] \oplus \CC[z]dz \otimes \CC[F] \\
\Omega^1_{S^{-}} &= \Omega^1_{\CC[F]} \otimes z^{-1} \CC[z^{-1}] \oplus z^{-1}\CC[z^{-1}]dz \otimes \CC[F] 
\end{align*}
We have $S=S^+ \oplus S^{-}$ and $\Omega^1_{S} = \Omega^1_{S^{+}} \oplus \Omega^1_{S^{-}} $ as vector spaces, and these decompositions are moreover compatible with the differential, in the sense that $d(S^{\pm}) \in \Omega^1_{S^{\pm}}$. Hence
\[
\Omega^1_S/ dS \simeq \Omega^1_{S^+}/d S^+ \oplus \Omega^1_{S^-}/d S^-
\]
which implies that as vector spaces
\[
\fghat_S/\fghat^{+}_S \simeq \fg \otimes S^{-} \oplus  \Omega^1_{S^-}/d S^-
\]
We note that the $S^1$ action on $\CC$ extends naturally to $\ffga(D(0,r))$ and  $\sG^{alg}_{\fg,\pi}(D(0,r))$, and that the differential $\wt{d}$ on the latter is $S^1$-equivariant. 
Using the residue pairing to identify $\CC[z,z^{1}]$ with a subspace of $(\Omega^1(U))^{\vee}$ and $\CC[z,z^{-1}] dz$ with a subspace of $(\mc{O}(D(0,r)))^{\vee}$, we obtain that 
\[
H^*(\ffga(D(0,r)), d_1)^{(l)} \simeq \fg \otimes \CC[F] \otimes \lbrace z^{l} \rbrace \oplus \left( \Omega^1_{\CC[F]} \otimes  \lbrace z^{l} \rbrace  \oplus \CC[F] \otimes \lbrace z^{l-1} dz \rbrace \right) / \on{im} (\wt{d}) 
\]
when $l \geq 0$ and $0$ otherwise. Therefore, as vector spaces
\[
V(\sG^{alg}_{\fg,\pi}) = \on{Sym} \left(\bigoplus_{l \geq 0} H^*(\ffga(D(0,r)), d_1)^{(l)} \right) = \on{Sym} ( \fg \otimes S^{-} \oplus  \Omega^1_{S^-}/d S^-  ) = \on{Sym}(\fghat_{S}/ \fghat^{+}_{S} )
\]

\end{proof}

\subsubsection{Verifying the hypotheses of Theorem \ref{fv}}

We proceed to verify the hypotheses of Theorem \ref{fv}, establishing part $(1)$ of Theorem \ref{main_theorem} above.
\begin{itemize}
 \item The first hypothesis is verified as in Section {5.4.2} of \cite{CG}
 \item The second and third hypotheses follow from Lemma \ref{coh_fa}, from which it follows in particular that $H^*( (\sG^{alg}_{\fg,\pi} (D(0,r)) )^{(l)})$ is non-zero only if $l \leq 0$
 \item The last hypothesis requires some attention. We note that it is necessary to ensure that the structure map $m_{z,0}$ possesses a Laurent expansion. By Lemma \ref{coh_fa}
$H^*( (\sG^{alg}_{\fg,\pi} (D(0,r)) )^{(l)})$ may be identified with the elements of weight $l$ in 
\[
\on{Sym} \left( \fg \otimes S^{-} \oplus  \Omega^1_{S^-}/d S^- 	\right)
\]
where $z$ and $dz$ have $S^1$ - weight $1$. We begin by showing that $\CC[F]$ and $\Omega^1_{\CC[F]}$ are naturally a sequential colimit of finite-dimensional vector spaces. This can be done as follows. Embed $F \subset \mathbb{A}^N = \on{Spec} \CC[x_1, \cdots, x_N]$. This induces an increasing filtration $F^k\CC[F]$, $k \geq 0$, where $F^k\CC[F]$ is spanned by the images of polynomials of degree $\leq k$ in $x_1, \cdots, x_N$. $\CC[F]$ and by the same reasoning $\Omega^1_{\CC[F]}$ can therefore be expressed as a countable union of finite-dimensional vector spaces. This induces a filtration on $\ffga$ compatible with the $\DVS$ structure, which in turn induces one on $H^*( (\sG^{alg}_{\fg,\pi} (D(0,r)) )^{(l)})$. 
\end{itemize}
\subsubsection{Constructing the isomorphism}

We proceed to prove part $(2)$ of Theorem \ref{main_theorem}. The proof is a variation on the approach taken in \cite{BWvir} with respect to the Virasoro factorization algebra, and involves three main steps:
\begin{enumerate}
\item Showing that $V(\sG^{alg}_{\fg,\pi})$ has the structure of a $\fghat_R$-module. 
\item Showing that $V(\sG^{alg}_{\fg,\pi}) \simeq V(\fghat_R)$ as $\fghat_R$-modules.
\item Checking that the vertex algebra structures agree by using the reconstruction theorem \ref{rec_thm}. 
\end{enumerate}

Let $\rho: \CC^{\times} \to \RR_{>0}$ be the map $\rho(z) = z \ol{z} = \vert z \vert^2$. 
The universal enveloping algebra $\mc{U}(\fghat_R)$ defines a locally constant prefactorization algebra on $\RR_{> 0}$ which we denote $\mc{A} \mc{U} (\fghat_R)$. 

\begin{lemma} \label{homo_prop}
There is a homomorphism $\Phi: \mc{A} \mc{U} (\fghat_R) \to \rho_* H^*( \sG^{alg}_{\fg,\pi}) $ of prefactorization algebras on $\RR_{> 0}$. 
\end{lemma}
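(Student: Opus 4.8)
The plan is to realize $\Phi$ as the algebra homomorphism corresponding, under the equivalence of Example \ref{ass_eg} between locally constant prefactorization algebras on a line and associative algebras, to a map of the underlying associative algebras. The first step is to identify these associative algebras. Since $\rho^{-1}(I)$ is an annulus centered at the origin for every interval $I \subset \RR_{>0}$, the pushforward assigns to $I$ the space $H^*(\sG^{alg}_{\fg,\pi}(\rho^{-1}(I)))$, and I would compute this exactly as in Lemma \ref{coh_fa}, replacing the disk by an annulus. The spectral sequence of the symmetric-degree filtration again degenerates, so $H^*(\sG^{alg}_{\fg,\pi}(\rho^{-1}(I))) \simeq \Sym(H^*(\ffga(\rho^{-1}(I)), d_1))$; the only change is that on an annulus the residue pairing of Corollary \ref{Serre_corollary} identifies the compactly supported Dolbeault cohomology with the \emph{full} Laurent space $\CC[z,z^{-1}]$ rather than its non-negative part, so that the degree-one summand is canonically $\fghat_R$, including all powers of $z$ and the full central term $\Omega^1_R/dR$. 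For $I \subset I'$ this identification is independent of the inner and outer radii, exactly as in hypothesis (2) of Theorem \ref{fv}, so $\rho_* H^*(\sG^{alg}_{\fg,\pi})$ is locally constant and hence corresponds to an associative algebra $B$ carrying $\fghat_R \hookrightarrow B$ as its symmetric-degree-one part.

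Granting this, I would construct $\Phi$ via the universal property of the enveloping algebra. It suffices to produce a Lie algebra map $\fghat_R \to B$, where $B$ is given its commutator bracket, restricting to the inclusion of the degree-one generators; the universal property of $\mc{U}(\fghat_R)$ then yields a unique algebra homomorphism $\mc{U}(\fghat_R) \to B$, which is the same datum as the sought morphism of locally constant prefactorization algebras on $\RR_{>0}$. Thus everything reduces to the claim that the commutator bracket induced on $\fghat_R \subset B$ by the factorization product agrees with the toroidal Lie bracket of $\fghat_R$.

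The main obstacle is precisely this commutator computation. I would evaluate the factorization product on two disjoint annuli $A_1, A_2$ nested inside a larger annulus, representing classes $X \otimes \alpha$ and $Y \otimes \beta$ of $\fghat_R$, and extract the commutator as the antisymmetrization $m(X\otimes\alpha, Y\otimes\beta) - m(Y\otimes\beta, X\otimes\alpha)$ of the two radial orderings (a choice of ordering convention fixing which annulus is the left factor). The purely quadratic $\Sym^2$ contribution is symmetric and drops out, so the commutator is governed by the $L_\infty$-brackets $\ell_2 = [\cdot,\cdot]_{\fg} + \phi^{(1)}$ and $\ell_3 = \phi^{(0)}$ from Section \ref{sec: cocycle}. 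Tracking the difference of orderings through the residue pairing of Corollary \ref{Serre_corollary} turns $\phi^{(1)}((X\otimes\alpha)\otimes(Y\otimes\beta)) = \tfrac{1}{2}\langle X, Y\rangle(\alpha\wedge\del\beta - (-1)^{|\alpha|}\del\alpha\wedge\beta)$ into a contour integral computing $\tfrac{1}{2}\langle X,Y\rangle\,\Res(\ldots)$, and likewise sends the $[\cdot,\cdot]_{\fg}$ term to $[X,Y]\otimes(\ldots)$, so that the net effect reproduces the bracket of $\fghat_R$ in the form $[X\otimes r, Y\otimes s] = [X,Y]\otimes rs + \overline{\langle X, Y\rangle\, r\,ds}$, with the central term landing in $\Omega^1_R/dR$. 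This is the analogue, in the presence of the fiber variable and the enlarged center, of the Kac-Moody computation of \cite{CG} and the Virasoro computation of \cite{BWvir}, and I expect the delicate part to be the bookkeeping of the residue pairing against the cocycle $\phi$ --- in particular verifying that the enlarged center $\Omega^1_R/dR$ is reproduced and that no spurious terms survive.

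Finally, with the commutator identified, compatibility with the remaining prefactorization structure maps is formal: the associativity constraint and the symmetry of the structure maps of $B$ are inherited from the associative product of $\mc{U}(\fghat_R)$, and $\Phi$ restricts on each interval to the fixed algebra map $\mc{U}(\fghat_R) \to B$, so it defines a genuine morphism of locally constant prefactorization algebras $\mc{A}\mc{U}(\fghat_R) \to \rho_* H^*(\sG^{alg}_{\fg,\pi})$ on $\RR_{>0}$.
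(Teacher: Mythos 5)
Your strategy is, at its core, the same as the paper's: everything comes down to showing that the radially-ordered commutator of the factorization product on classes supported in disjoint annuli reproduces the bracket of $\fghat_R$, central term included. The paper does not package this through local constancy of the target and the universal property of $\mc{U}(\fghat_R)$; instead it uses the fact (from \cite{CG}) that a map of prefactorization algebras on $\RR_{>0}$ out of the locally constant source $\mc{A}\mc{U}(\fghat_R)$ is determined by its values on intervals, defines $\Phi_I$ on the linear generators explicitly by wedging with a radial bump function $f_I$ (e.g.\ $J\otimes u t^k \mapsto -[J\otimes u z^{k+1} f_I\, d\zbar]$), and verifies the commutator identity at the cochain level. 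Your packaging is legitimate, but note two inaccuracies in the setup: the degree-one part of $H^*(\sG^{alg}_{\fg,\pi}(A_I))$ is \emph{not} canonically $\fghat_R$ --- by Corollary \ref{Serre_corollary} it is built from continuous duals $(\Omega^1(A_I))^{\vee}$, $(\mc{O}(A_I))^{\vee}$, which properly contain the Laurent polynomials --- so what you actually have is an embedding $\fghat_R\hookrightarrow B$, and that embedding must be checked to be well defined on the central quotient $\Omega^1_R/\d R$, i.e.\ one must verify $\Phi_I(t^k\d u + k t^{k-1}u\,\d t)=0$. This is not automatic from the residue pairing; the paper proves it using the identities $\int z^a f_I\,dz\,d\zbar=\delta_{a,0}$ and $\int z^b\,\partial f_I\wedge d\zbar=-\delta_{b,1}$.

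The more substantive issue is your claim that ``the purely quadratic $\on{Sym}^2$ contribution is symmetric and drops out.'' The two quadratic terms are $[\alpha_{A_1}]\cdot[\beta_{A_2}]$ and $[\beta_{A_2}]\cdot[\alpha_{A_3}]$ with $\alpha$ placed on \emph{different} annuli in the two orderings, so their difference does not vanish by symmetry of the product; it is only exact, via an explicit primitive (the paper's $F_k(z,\zbar)=z^k\int_0^{z\zbar}(f_{I_1}-f_{I_3})\,ds$, which interpolates between the two placements and equals $z^k$ on $A_{I_2}$). The bracket then arises precisely because this primitive is closed for $d_1=\delbar$ only up to the terms produced by $d_{CE}$ and the cocycle $\phi$ in the full differential $\wt{d}$: computing $\wt{d}\bigl([J_1 u F_k]\cdot[J_2 v z^{l+1}f_{I_2}d\zbar]\bigr)$ yields the difference of the two orderings plus $\Phi_{I_2}([J_1,J_2]uvt^{k+l})$ plus the central contribution from $\phi^{(1)}$, and one must again invoke the bump-function identities to match the latter with $\tfrac12\langle J_1,J_2\rangle\,\ol{u t^k\,\d(vt^l)-vt^l\,\d(ut^k)}$. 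This chain-homotopy computation, together with the well-definedness check above, is the actual content of the lemma; your proposal correctly identifies where it lives but defers it, so as written the proof is incomplete at exactly the decisive step.
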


\begin{proof}
\begin{itemize}
\item It is shown in \cite{CG} that a map of prefactorization algebras on $\RR_{>0}$ is determined by the maps $\Phi_I$ on connected open intervals. 
For each open interval $I \subset \RR_{>0}$, $A_I = \rho^{-1}(I)$ is an annulus. We choose for each such a bump function $f_{I}: A_I \to R$ having the properties 
\begin{itemize}
\item $f$ is a function of $r^2=z \ol{z}$ only.
\item $f \geq 0$ and $f$ is supported in $A_I$.
\item $\int_A f dz d\ol{z} =1$. 
\end{itemize}
The map $\Phi_I$ is uniquely determined by where it sends the generators of $\fghat_R$. 
We define $\Phi_I$ on these linear generators by the assignments:
\begin{align*}
\Phi_I( J \otimes u t^k ) &= - [ J \otimes u z^{k+1} f_I  d\zbar ] \\
\Phi_I( t^k \omega ) &= [ z^{k+1}  f_{I} \omega \wedge d \zbar ] \\
\Phi_I( t^k  u dt ) &= [ u z^{k+1} f_{I} dz d \zbar] 
\end{align*}
where $J \in \fg, u \in \CC[F], \omega \in \Omega^1_{\CC[F]}$, and $[-] \in  H^*( \sG^{alg}_{\fg,\pi}(A_I))$ denotes the $\delbar$-cohomology class of the closed differential form. 
The elements on the right are clearly closed for the differential $\wt{d}$, and the corresponding $\wt{d}$-cohomology classes are easily seen to be independent of the choice of the function $f$. 

\item We first check that $\Phi_I$ is well-defined, which amounts to verifying that $$\Phi_I (d (u t^k)  ) = \Phi_I (t^k du + k t^{k-1} u dt) =  0 \in H^*( \sG^{alg}_{\fg,\pi}(A_I)) $$ for each $u \in \CC[F], k \in \mathbb{Z}$. We have
\[
\Phi_I (t^k du + k t^{k-1} u dt) = - [z^{k+1} f_{I} d \zbar du ] - [k z^k u f_{I} dz d\zbar] 
\]
We note that this cohomology class lies in the image of the natural map 
\begin{equation} \label{proj1}
 \Omega^{1,1}_c(A_I)   \otimes \CC[F] \oplus \Omega^{0,1}_c (A_I)   \otimes \Omega^1_{\CC[F]} \subset  (\ffga( A_I))^{cl} \to  H^*( \sG^{alg}_{\fg,\pi}(A_I))
\end{equation}
where $(\ffga(A_I))^{cl}$ denotes the subspace of sections closed for the differential $d_1$, and therefore $\wt{d}$. The projection (\ref{proj1}) may be factored by first taking the quotient by $\on{im} (\delbar) $, in other words as
\[
 \Omega^{1,1}_c(A_I)   \otimes \CC[F] \oplus \Omega^{0,1}_c (A_I)  \otimes \Omega^1_{\CC[F]} \to ( \Omega^{1}_X (A_I))^{\vee} \otimes \Omega^1_{\CC[F]} \oplus (\mc{O} (A_I))^{\vee} \otimes \CC[F] \to  H^*( \sG^{alg}_{\fg,\pi}(A_I))
\]
We calculate  
\begin{align*}
-[z^{k+1}& f_{I} du d \zbar ] - [k z^k u f_{I} dz d\zbar] + [d_{\mc{K}^{\#, alg}_{\pi}}( z^{k+1} u f_I d \zbar) ] \\
&= -[z^{k+1} f_{I} du d \zbar ]  -[k z^k u f_{I} dz d\zbar] +
 [ z^{k+1} f_I du d \zbar + (k+1) z^k  u f_I dz d \zbar  + u z^{k+1} \partial f \wedge d \zbar ] \\
 &= + [u (z^k f_I dz d \zbar + z^{k+1} \partial f \wedge d \zbar )]
\end{align*}
It therefore suffices to show that $[z^k f_I dz d \zbar + z^{k+1} \partial_z f_I \wedge d \zbar ] = 0 \in (\mc{O}_X (A_I))^{\vee}$, or equivalently, that 
\[
\int_{A_I} z^m \wedge (z^k f_I dz d \zbar + z^{k+1} \partial f \wedge d \zbar ) = 0 \; \; \forall m \in \mathbb{Z}
\]
This follows from the identities
\begin{equation} \label{identities}
\int_{A_I} z^a f_I dz d\zbar = \delta_{a,0} \; \; \; ,  \int_U z^b \partial f \wedge d \zbar = - \delta_{b,1}
\end{equation}
for $a,b \in \Z$, which follows from integrating by parts. 

\item Consider three disjoint open intervals $I_1, I_2, I_3 \subset \RR_{>0}$, such that $I_{i+1}$ is located to the right of $I_i$, all contained in a larger interval $I$. Their inverse images under $\rho$ correspond to three nested annuli $A_{I_i}$ inside a larger annulus $A_I$. We have structure maps
\[
\bullet_{i,i+1} : \rho_* H^*( \sG^{alg}_{\fg,\pi})(I_i) \otimes \rho_* H^*( \sG^{alg}_{\fg,\pi})(I_{i+1}) \to \rho_* H^*( \sG^{alg}_{\fg,\pi})(I) \; \; i=1, 2
\]
To show that $\Phi$ is a prefactorization algebra homomorphism, we have to check that for $X, Y \in \fghat_{R}$, 
\[
\Phi_{I_1} (X) \bullet_{1,2} \Phi_{I_2} (Y) - \Phi_{I_2}(Y) \bullet_{2,3} \Phi_{I_3} (X) = \Phi_I ([X,Y]) 
\]
Let 
\[
F_m(z,\zbar) = z^m \int^{z \zbar}_0 (f_{I_1}(s) - f_{I_3}(s)) ds
\]
Then on $A_{I_2}$, $F_m = z^m$, and moreover,
\begin{align*}
\delbar F_m (z, \zbar) & = z^{m} \delbar (  \int^{z \zbar}_0 (f_{I_1}(s) - f_{I_3}(s)) ds ) \\ 
			          & = z^m \frac{\partial (z \zbar)}{\partial \zbar} \frac{\partial}{\partial (z \zbar)} ( \int^{z \zbar}_0 (f_{I_1}(s) - f_{I_3}(s)) ds) d \zbar \\
			          & = z^{m+1} ( f_{I_1} (z \zbar) - f_3 ( z \zbar) ) d \zbar
			          \end{align*}

Let $J_1, J_2 \in \fg, \; u, v \in \CC[F]$. Then
\begin{align*}
\Phi_{I_1} & (J_1 u t^k ) \bullet_{1,2} \Phi_{I_2} (J_2 v t^l) - \Phi_{I_2}(J_2 v t^l ) \bullet_{2,3} \Phi_{I_3} (J_1 u t^k) - \Phi_{I_2} ([J_1 u t^k, J_2 v t^l] ) \\
  &=  \Phi_{I_1}  (J_1 u t^k ) \bullet_{1,2} \Phi_{I_2} (J_2 v t^l) - \Phi_{I_2}(J_2 v t^l ) \bullet_{2,3} \Phi_{I_3} (J_1 u t^k)  \\ & \; \; \; \; \; - \Phi_{I_2} \left( [J_1, J_2] uv t^{k+l} + \frac{1}{2} \langle J_1, J_2 \rangle( u t^k d(v t^l) - v t^l d(u t^k) ) \right) \\
& = \left( [J_1 u z^{k+1} f_{I_1} d\zbar] \cdot  [J_2 v z^{l+1} f_{I_2} d\zbar] -  [J_2 v z^{l+1} f_{I_2} d\zbar] \cdot  [J_1 u z^{k+1} f_{I_3} d\zbar] \right) + \\
& \; \; \; \; + [ [J_1, J_2] uv z^{k+l+1} f_{I_2} d \zbar ] + \frac{1}{2} \langle J_1, J_2 \rangle [z^{k+l+1} f_{I_2} (u dv - v du) d \zbar + (l-k) u v z^{k+l} f_{I_2} dz d\zbar] \\
\end{align*}

We also have 
\begin{align*}
\wt{d}  & \left(  [J_1 u F_k] \cdot [J_2 v z^{l+1} f_{I_2} d \zbar]  \right) \\
&= \left( [J_1 u z^{k+1} f_{I_1} d\zbar] \cdot  [J_2 v z^{l+1} f_{I_2} d\zbar] -  [J_2 v z^{l+1} f_{I_2} d\zbar] \cdot  [J_1 u z^{k+1} f_{I_3} d\zbar] \right) + [ [J_1, J_2] uv z^{k+l+1} f_{I_2} d \zbar ] \\
& \; \; \; \; + \frac{1}{2} \langle J_1, J_2 \rangle [ u F_k \partial(v z^{l+1} f_{I_2} d \zbar  ) - \partial( F_k u) \wedge (z^{l+1} v f_{I_2} d \zbar )] \\
&= \left( [J_1 u z^{k+1} f_{I_1} d\zbar] \cdot  [J_2 v z^{l+1} f_{I_2} d\zbar] -  [J_2 v z^{l+1} f_{I_2} d\zbar] \cdot  [J_1 u z^{k+1} f_{I_3} d\zbar] \right) + [ [J_1, J_2] uv z^{k+l+1} f_{I_2} d \zbar ] \\
& \; \; \; \; + \frac{1}{2} \langle J_1, J_2 \rangle [ z^{k+l+1}(u dv - v du)f_{I_2} d \zbar  + uv ( (l-k+1) f_{I_{2}}z^{k+l} dz d \zbar - z^{k+l+1} \partial f_{I_2} \wedge d \zbar)] \\
\end{align*}
where we have used the fact that over the support of $f_{I_2}$, $F_k = z^k$. Using the identities (\ref{identities}), we obtain
\[
[ (l-k+1) z^{k+l} f_{I_{2}} dz d \zbar - z^{k+l+1} \partial f_{I_2} \wedge d \zbar] = [(l-k) z^{k+l} f_{I_2} dz d \zbar ].
\]
It follows that 
\[
\Phi_{I_1}  (J_1 u t^k ) \bullet_{1,2} \Phi_{I_2} (J_2 v t^l) - \Phi_{I_2}(J_2 v t^l ) \bullet_{2,3} \Phi_{I_3} (J_1 u t^k) - \Phi_{I_2} ([J_1 u t^k, J_2 v t^l] ) = 0 \in  \rho_*H^*( \sG^{alg}_{\fg,\pi})(I)
\]
proving the lemma. 
\end{itemize}
\end{proof}

The homomorphism $\Phi$ of Proposition (\ref{homo_prop}) equips $V(\sG^{alg}_{\fg,\pi})$ with the structure of a $\fghat_R$-module. Let us fix $0 < r < r' < R$ We have the following commutative diagram:
\[
\begin{tikzcd}
H^*( \sG^{alg}_{\fg,\pi}(D(0,r))) \otimes H^*( \sG^{alg}_{\fg,\pi} (A(r',R)))  \arrow{r}{m}  & H^*( \sG^{alg}_{\fg,\pi} (D(0, R)) \\
V(\sG^{alg}_{\fg,\pi}) \otimes \arrow{u}{\iota \otimes \Phi} \mc{A} \mc{U} (\fghat_R) \arrow[r, dotted] & V(\sG^{alg}_{\fg,\pi}) \arrow{u}{\iota}
\end{tikzcd}
\]
where $\iota$ denotes the inclusion of $V(\sG^{alg}_{\fg,\pi}) \subset H^*( \sG^{alg}_{\fg,\pi}(D(0,r)))$ (for any $r$), and $m$ is the prefactorization structure map. As explained in \cite{CG}, the existence of the dotted arrow (i.e. the fact that the $\mc{U}(\fghat_R)$-action preserves the subspace $V(\sG^{alg}_{\fg,\pi})$) follows from the fact that the structure map $m$ is $S^1$-equivariant. 

In concrete terms, the action of $X \in \fghat_R$ on $v \in V(\sG^{alg}_{\fg,\pi})$ is given as follows: we may represent $v$ by a closed chain $\wt{v} \in \Ch(\ffga(D(0,r))$ - then $X\cdot v$ is represented by $\Phi_{(r', R)}(X)\cdot \wt{v}$. 

\begin{lemma}
There is an isomorphism of $\fghat_R$-modules $$  \eta: V(\fghat_R) \to V(\sG^{alg}_{\fg,\pi}) $$ which sends $\vac \in V(\fghat_R)$ to $1 \in V(\sG^{alg}_{\fg,\pi})$. 
\end{lemma}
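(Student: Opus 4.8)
The plan is to construct $\eta$ from the universal property of the induced module and then check bijectivity by a graded dimension count against Lemma \ref{coh_fa}. Since $V(\fghat_R) = U(\fghat_R)\otimes_{U(\fghat^+_R)}\CC$, a homomorphism of $\fghat_R$-modules out of $V(\fghat_R)$ is the same datum as a vector $w$ in the target annihilated by $\fghat^+_R$; the homomorphism then sends $\vac \mapsto w$ and is extended by the action of $U(\fghat_R)$. I would take $w = 1 \in V(\sG^{alg}_{\fg,\pi})$, the vacuum coming from the unit of the prefactorization algebra, so that the only thing to verify for well-definedness is that $\fghat^+_R$ annihilates $1$.

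First I would verify this annihilation. Recall from the paragraph following Lemma \ref{homo_prop} that the action of $X \in \fghat_R$ on a class represented by a closed chain $\wt{v} \in \Ch(\ffga(D(0,r)))$ is given by $\Phi_{(r',R)}(X)\cdot \wt{v}$. For a generator $J\otimes u t^k$ of $\fghat^+_R$ (i.e. $k \geq 0$) the operator $\Phi_{(r',R)}(J\otimes u t^k)$ is the class $-[J\otimes u z^{k+1} f\, \dzbar]$, and similarly for the central generators $t^k\omega$ and $u t^k dt$ lying in $\fghat^+_R$. I would show that multiplying any of these into the vacuum yields a class vanishing in $V(\sG^{alg}_{\fg,\pi})$: since $k+1 \geq 1$, the form $z^{k+1} f\, \dzbar$ extends holomorphically over the disk and the relevant residue pairings (\ref{identities}) vanish, so the resulting $\wt{d}$-cohomology class is zero. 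Equivalently, this is forced by the $S^1$-grading, since the vacuum has weight $0$, the non-negative modes strictly shift the weight, and by Lemma \ref{coh_fa} the weight spaces are nonzero only in the appropriate half-line, so the product lands in a vanishing eigenspace. Either way $\fghat^+_R$ annihilates $1$, and $\eta$ is a well-defined $\fghat_R$-module map with $\eta(\vac)=1$.

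It remains to show $\eta$ is an isomorphism. For surjectivity, note that by Lemma \ref{coh_fa} the underlying vector space of $V(\sG^{alg}_{\fg,\pi})$ is $\Sym(\fghat_S/\fghat^+_S) \cong U(\fghat_S)\otimes_{U(\fghat^+_S)}\CC$ with $S = \CC[F][z,z^{-1}] = R$; in particular it is generated from $1$ by the negative modes, so the submodule $\on{im}\,\eta \ni 1$ is everything. For injectivity I would compare graded pieces: both $V(\fghat_R)$ (by PBW applied to the induced module) and $V(\sG^{alg}_{\fg,\pi})$ (by Lemma \ref{coh_fa}) are isomorphic as $S^1$-graded vector spaces to $\Sym(\fghat_R/\fghat^+_R)$, with finite-dimensional, identically-graded weight spaces, so a weight-preserving surjection between them is automatically an isomorphism. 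The main obstacle is not the formal construction but the compatibility underlying injectivity: one must be sure that the negative-mode operators supplied by $\Phi$ act on the vacuum as genuine free generators, i.e. that the module structure from $\Phi$ is compatible with the vector-space identification of Lemma \ref{coh_fa}, so that the associated graded of $\eta$ is the identity on $\Sym(\fghat_R/\fghat^+_R)$; granting this, the dimension count closes the argument.
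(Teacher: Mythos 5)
Your overall architecture coincides with the paper's proof: invoke the universal property of the induced module, check that $\fghat^+_R$ annihilates $1$, and upgrade the resulting module map to an isomorphism by comparing the filtrations by symmetric degree on both sides. The one step you execute differently is the annihilation. The paper exhibits explicit primitives, setting $h(z,\zbar)=\int_0^{z\zbar}f(s)\,ds$ so that $\Phi_{(r',R)}(J\otimes ut^k)=[Juz^{k+1}f\,\dzbar]=\wt{d}(Juz^kh)$ for $k\geq 0$ (and similarly for the central generators), whereas you argue through the Serre-duality description of $H^*(\ffga(D(0,r)),d_1)$: the class $[z^{k+1}f\,\dzbar]$ pairs trivially with every holomorphic form on the disk because only strictly negative powers of $z$ survive the residue pairing (\ref{identities}). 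Both are valid, and your version has the small advantage of not requiring any bookkeeping about the support of the primitive $z^kh$. Two cautions on your auxiliary remarks. First, the ``equivalently, by the $S^1$-grading'' alternative is not automatic as stated: the $k=0$ modes act with weight $0$ and the weight-$0$ eigenspace of $H^*(\ffga(D(0,r)))$ is nonzero, so one must additionally track that the image lands in the summand dual to $\Omega^1(D)$ (resp.\ $\mc{O}(D)$), whose weights are bounded strictly below the weight of the image; your residue argument is the safer route. Second, the weight spaces of $V(\fghat_R)$ are \emph{not} finite dimensional for general affine $F$, since $\CC[F]$ is infinite dimensional, so a literal dimension count cannot close the argument; what does close it --- and what you correctly fall back on, exactly as the paper does --- is that $\eta$ is a map of exhaustively filtered modules whose associated graded is the identity on $\on{Sym}(\fghat_R/\fghat^+_R)$.
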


\begin{proof}
Let $h(z,\zbar) = \int^{z \zbar}_0 f(s) ds$. By the chain rule, we have that 
\[
\delbar (z^n h(z,\zbar) )= z^{n+1} f(z \zbar) d \zbar
\]
Thus in $H^*( \sG^{alg}_{\fg,\pi} (D(0, R))$, we have for $k \geq 0$:
\begin{align*}
\Phi_{(r', R)}( J u t^k) & = [J u z^{k+1} f(z \zbar) d \zbar ] = \wt{d} ( J u z^k h(z, \zbar))   \\
\Phi_{(r', R)}( t^k u dv) & = [ z^{k+1} f(z \zbar)  u d \zbar dv] = \wt{d} (  z^k h(z, \zbar) u dv) \\
\Phi_{(r', R)}( u t^k dt) & = [ u z^{k+1} f(z \zbar) d \zbar dz ] = \wt{d} ( u z^k h(z, \zbar) dz)
\end{align*}
In other words, if $X \in \fghat^+_R$, then $\Phi_{(r', R)}(X) = 0 \in H^*( \sG^{alg}_{\fg,\pi} (D(0, R))$. This shows that the vector $1 \in V(\sG^{alg}_{\fg,\pi}) $ is annihilated by $\fghat^+_R$. It follows that there exists a unique map of $\fghat_R$-modules $\eta: V(\fghat_R) \to V(\sG^{alg}_{\fg,\pi})$ sending $\vac \to 1$. It remains to show this is an isomorphism, which can be done as in \cite{CG, BWvir} for the affine and Virasoro algebra, so we will be brief. Both $V(\fghat_R)$ and $V(\sG^{alg}_{\fg,\pi})$ have the structure of filtered $\mc{U}(\fghat_R)$-modules, where in each case the filtration is induced by symmetric degree. It is straightforward to verify that  $\eta$ induces an isomorphism at the level of associated graded modules, proving the result. 

\end{proof}

To complete the proof of Theorem \ref{main_theorem}, we check that $\eta$ induces an isomorphism of vertex algebras.
Suppose that $z \in A((r', R))$. Recall that the operation $$ Y: V(\sG^{alg}_{\fg,\pi}) \otimes V(\sG^{alg}_{\fg,\pi}) \to V(\sG^{alg}_{\fg,\pi})((z)) $$ is induced from the diagram
\[
\begin{tikzcd}
V(\sG^{alg}_{\fg,\pi}) \otimes V(\sG^{alg}_{\fg,\pi}) \arrow{d}{\iota \otimes \iota_z}  & \\
H^*( \sG^{alg}_{\fg,\pi}(D(z,\epsilon))) \otimes H^*( \sG^{alg}_{\fg,\pi}(D(0, r))) \arrow{r}{m_{z,0}} & H^*( \sG^{alg}_{\fg,\pi}(D(0,R)))
\end{tikzcd}
\]
as the Laurent expansion of the map $m_{z,0} \circ \iota \otimes \iota_z$. By the Reconstruction Theorem \ref{rec_thm}, it suffices to show that the generating field assignments agree, that is we need to verify that for $v \in V(\sG^{alg}_{\fg,\pi})$, 
\begin{align*}
m_{z,0} ( \iota_{z} (\eta(Jut^{-1} \cdot \vac)), \iota(v)) &= \sum_{n \in \mathbb{Z}} (\Phi(Jut^{n}) \cdot v) z^{-n-1} \\
m_{z,0} ( \iota_z( \eta(ut^{-1}dt \cdot \vac)), \iota(v)) &= \sum_{n \in \mathbb{Z}} (\Phi(ut^{n-1} dt) \cdot v) z^{-n} \\
m_{z,0} ( \iota_z(\eta(t^{-1} \omega \cdot \vac)), \iota(v)) &= \sum_{n \in \mathbb{Z}} (\Phi(t^{n} \omega) \cdot v) z^{-n-1} \\
\end{align*}
$\iota_{z} (\eta(Jut^{-1} \cdot \vac))$ may be identified with the element $ J u \psi_z \in   \fg \otimes \CC[F] \otimes (\Omega^1 (D(z,\epsilon)))^{\vee}$, where 
$\psi_z \in (\Omega^1 (D(z,\epsilon)))^{\vee}$ is defined by
\[
\psi_z (h(w) dw) = \frac{1}{2 \pi i } \oint_{C(z, \delta)} \frac{h(w) dw}{w-z}
\]
By the residue theorem, for $h(w) dw \in \Omega^1(A(r, R))$, we may switch contours, to write
\begin{align*}
 \oint_{C(z, \delta)} \frac{h(w) dw}{w-z} &=  \oint_{C(0, R-\delta)} \frac{h(w) dw}{w-z} -  \oint_{C(0, r' + \delta)} \frac{h(w) dw}{w-z} \\
 							&= \sum_{n \geq 0} (\oint_{C(0, R-\delta)} w^{-n-1} h(w) dw) z^{n}  +  \sum_{n < 0} (\oint_{C(0, r'+\delta)} w^{-n-1} h(w) dw) z^{n} \\
\end{align*}
 where in the second line we have expanded $\frac{1}{w-z}$ into a geometric series in the domains $|w| > |z|$ and $|w| < |z|$ respectively. Using the fact that
 \[
 Res_0 h(w) w^{-n-1} dw = \int_{A(r', R)} h(w) w^{-n}  f_{(r, R)} dw d \overline{w} 
 \]
and $\Phi(J u t^{-n-1}) \cdot v = [ J u z^{-n} f_{(r', R)} d \zbar] \cdot v$ we obtain the first identity. Similarly, we may identify $ \iota_z( \eta(ut^{-1}dt \cdot \vac)) $ with the element $u \xi_z \in \CC[F] \otimes (\mc{O}(D(z,\epsilon)))^{\vee}$, where
\[
\xi_z (h(w)) = h(z) =  \frac{1}{2 \pi i } \oint_{C(z, \delta)} \frac{h(w) dw}{w-z}
\]
and $\iota_z(\eta(t^{-1} \omega \cdot \vac))$ with $\omega \psi_z \in \Omega^1_{\CC[F]} \otimes   (\Omega^1 (D(z,\epsilon)))^{\vee}$. Expanding these in contour integrals centered at $0$, and identifying the coefficients with appropriate elements in the image of $\Phi$ as above proves the remaining two identities.  

\newpage

\bibliographystyle{alpha}
\bibliography{toroidal_bib}

\address{\tiny DEPARTMENT OF MATHEMATICS AND STATISTICS, BOSTON UNIVERSITY, 111 CUMMINGTON MALL, BOSTON} \\
\indent \footnotesize{\email{szczesny@math.bu.edu}}

\address{\tiny DEPARTMENT OF MATHEMATICS AND STATISTICS, BOSTON UNIVERSITY, 111 CUMMINGTON MALL, BOSTON} \\
\indent \footnotesize{\email{jackwalt@bu.edu}}

\address{\tiny DEPARTMENT OF MATHEMATICS, NORTHEASTERN UNIVERSITY, LAKE HALL, BOSTON}
\indent \footnotesize{\email{br.williams@northeastern.edu}}

\end{document}